\def\sup{\operatorname{sup}}
\def\max{\operatorname{max}}
\def\min{\operatorname{min}}
\def\C{\mathbb{C}}
\def\R{\mathbb{R}}
\def\N{\mathbb{N}}
\def\Z{\mathbb{Z}}
\def\AA{\mathcal{A}}
\def\EE{\mathcal{E}}
\def\LL{\mathcal{L}}
\def\OO{\mathcal{O}}
\def\PP{\mathcal{P}}
\def\UU{\mathcal{U}}
\def\VV{\mathcal{V}}
\def\ZZ{\mathcal{Z}}
\def\hh{\mathfrak{H}}
\def\kk{\mathfrak{K}}
\newcommand{\Rpunc}{R_\textnormal{punc}}
\newcommand{\Opunc}{\Omega_\textnormal{punc}}
\newcommand{\Apunc}{A_\textnormal{punc}}
\newcommand{\Implies}[2]{$\text{\ref{#1}}\implies\text{\ref{#2}}$}
\newtheorem{thm}{Theorem}[section]
\newtheorem{cor}[thm]{Corollary}
\newtheorem{lemma}[thm]{Lemma}
\newtheorem{prop}[thm]{Proposition}
\newtheorem{thm1}{Theorem}
\theoremstyle{definition}
\newtheorem{definition}[thm]{Definition}
\newtheorem{notation}[thm]{Notation}
\theoremstyle{remark}
\newtheorem{remark}[thm]{Remark}
\newtheorem*{Acknowledgements}{Acknowledgements}
\numberwithin{equation}{section}
\tikzstyle{vertex}=[circle]
\tikzstyle{goto}=[->,shorten >=1pt,>=stealth,semithick]
\begin{document}

\title[Classification of tiling $C^*$-algebras]{Classification of tiling $C^*$-algebras}
\author[Luke J. Ito]{Luke J. Ito}
\address{Luke J. Ito, Michael F. Whittaker and Joachim Zacharias, School of Mathematics and Statistics, University of Glasgow, University Place, Glasgow Q12 8QQ, United Kingdom}
\email{l.ito.1@research.gla.ac.uk}

\author[Michael F. Whittaker]{Michael F. Whittaker}
\email{Mike.Whittaker@glasgow.ac.uk}

\author[Joachim Zacharias]{Joachim Zacharias}
\email{Joachim.Zacharias@glasgow.ac.uk}

 \thanks{This research was partially supported by EPSRC grants EP/R013691/1, EP/R025061/1, EP/N509668/1 and EP/M506539/1.}

\begin{abstract}
We prove that Kellendonk's $C^*$-algebra of an aperiodic and repetitive tiling with finite local complexity is classifiable by the Elliott invariant. Our result follows from showing that tiling $C^*$-algebras are $\ZZ$-stable, and hence have finite nuclear dimension. To prove $\ZZ$-stability, we extend Matui's notion of almost finiteness to the setting of \'etale groupoid actions following the footsteps of Kerr. To use some of Kerr's techniques we have developed a version of the Ornstein-Weiss quasitiling theorem for general \'etale groupoids.
\end{abstract}

\maketitle

\section{Introduction}\label{sec:intro}

Due to the recent completion of the classification program for simple $C^*$-algebras with finite nuclear dimension \cite{EGLN,GLN,TWW}, the time is ripe to look for naturally occurring examples to which this program applies. In this paper, we consider $C^*$-algebras associated to repetitive aperiodic tilings with finite local complexity (FLC). A tiling is a covering of $\R^d$ by a collection of labelled compact subsets, called tiles, that only meet on their boundary. Aperiodic tilings give rise to two $C^*$-algebras: a crossed product $C(\Omega) \rtimes \R^d$ and a quantised algebra that arises from an abstract transversal to the translation action on the tiling space $\Omega$. The latter algebra is due to Kellendonk \cite{Kel}, and was developed to give a new picture (up to stable isomorphism) of algebras studied by Bellissard in order to understand the spectrum of Sch\"odinger operators with aperiodic potential \cite{Bel}.

Tiling algebras are particularly well-suited to the classification program since their Elliott invariant is well understood \cite{AP, Rob, Starling, KP, Kel, FHK, GonRS}. Our strategy is in line with the most recent techniques developed for the classification of $C^*$-algebras coming from minimal actions of general amenable groups acting on compact metric spaces, studied by Kerr and others \cite{Kerr, CJKMSTD}. The main result of this paper is the following.

\begin{thm1}[c.f. Theorem \ref{tilingZstable}]
The class of $C^*$-algebras consisting of those which are associated to any aperiodic and repetitive tiling with FLC is classified by the Elliott invariant.
\end{thm1}

The classification of tiling algebras is highly relevant to mathematical physics. Bellissard showed that gaps in the spectrum of Schr\"odinger operators with aperiodic potential are topological in nature in the sense that they are unchanged under perturbation of the Hamiltonian \cite{Bel}. In particular, he proved that these gaps are labelled by the range of the trace of K-theory elements of a crossed product tiling $C^*$-algebra that is strongly Morita equivalent to Kellendonk's algebra. Standard results from the classification program for simple $C^*$-algebras then imply that the isomorphism class of the $C^*$-algebra Bellissard associated to a Schr\"odinger operator is completely determined by the operator itself.

Before discussing the specific techniques used in this paper, we wish to put our methods and results into the context of the classification program for simple nuclear $C^*$-algebras. The classification program may be regarded as a $C^*$-analogue of the Connes-Haagerup classification of injective factors. It has its origin in Elliott's classification of AF-algebras by K-theory from the 1970s \cite{Ell} but was put forward as a conjecture only in the late 1980s \cite{Ell2}. Whilst initially regarded as  very speculative, an outburst of subsequent intensive research produced a great deal of positive evidence. Around 1995, Kirchberg and Phillips proved the Elliott conjecture for simple nuclear purely infinite $C^*$-algebras (now known as Kirchberg algebras) satisfying the UCT \cite{Phil97}. A key role in this classification was played by Kirchberg's absorption result \cite{KirchbergPhillips, Kirchberg}, which says that $A$ is a Kirchberg algebra if and only if $A \otimes \OO_{\infty} \cong A$.  In 2003, R{\o}rdam \cite{Rordam} proved that there are infinite simple nuclear $C^*$-algebras that fail the Elliott conjecture. Subsequently, Toms discovered examples with no infinite projections that the Elliott invariant cannot classify \cite{Toms1, Toms2}.

Unlike in the von Neumann algebra case, where there are only very few injective factors with traces - one finite and one semifinite such factor - the classification of simple nuclear $C^*$-algebras with traces poses a major challenge. Inspired by Kirchberg's absorption result, Jiang and Su constructed a simple unital nuclear $C^*$-algebra $\ZZ$ with unique trace which is KK-equivalent to the complex numbers \cite{JS}. They proved that $A \otimes \ZZ \cong A$ for simple AF-algebras and this was subsequently extended to large classes of other examples. Since $A$ and $A \otimes \ZZ$ have the same Elliott invariants (under mild assumptions), $\ZZ$-stability appears as a minimal classifiability requirement. It was becoming more and more clear that a new regularity hypothesis was necessary to continue Elliott's classification program.

The necessary regularity condition came in 2010 when Winter and the third named author introduced nuclear dimension in \cite{WZ} (thanks to Winter's handwriting, it was also known as unclear dimension). This is a non-commutative dimension theory which builds on previous definitions by Kirchberg and Winter but which is finite for purely infinite simple $C^*$-algebras. It turned out that  simple $C^*$-algebras without traces have finite nuclear dimension if and only if they are purely infinite. Hence, finiteness of nuclear dimension captures the correct classifiability condition for infinite $C^*$-algebras. Moreover, Winter proved that finite nuclear dimension for simple $C^*$-algebras implies $\ZZ$-stablity, as well as far reaching regularity properties of their Cuntz semigroup \cite{Win}. Shortly afterwards, Toms and Winter conjectured that for simple nuclear $C^*$-algebras, finiteness of nuclear dimension, $\ZZ$-stablity, and strict comparison, a regularity condition for the Cuntz semigroup of the algebra, should all be equivalent. 

After many partial results, it is now known by the fundamental work of  \cite{CETWW} that  finiteness of nuclear dimension and $\ZZ$-stablity are indeed equivalent. To establish classifiability of concrete examples one is therefore free to either to find an upper bound for the nuclear dimension or to show $\ZZ$-stablity. Whilst it initially appeared difficult to prove $\ZZ$-stablity directly, there are now relatively easy methods to do so thanks to Hirshberg and Orovitz's application \cite{HO} of the breakthroughs of Matui and Sato \cite{MS2}.

Kellendonk's tiling algebras are defined using a principal \'etale groupoid with compact and totally disconnected unit space.  The notion of almost finiteness for second countable \'etale groupoids with compact and totally disconnected unit spaces was introduced by Matui in \cite{Matui}. The definition was designed as a weakening of the AF (approximately finite) property of groupoids, motivated by the fact that AF \'etale groupoids with totally disconnected unit spaces were already known to be completely classified up to isomorphism \cite{Ren}. In \cite{Suzuki}, Suzuki extended the definition to general \'etale groupoids.

Kerr introduced the notion of almost finiteness for free actions of groups on compact metric spaces in \cite{Kerr}. This complements Matui's original definition in the sense that the transformation groupoids arising from such actions are almost finite. The motivation for the definition in the group case is to provide a dynamical substitute for the properties of $\ZZ$-stability from the topological setting and hyperfiniteness from the measure-theoretic setting. It is shown in \cite{Kerr} that the definition accomplishes this goal; almost finite, free, minimal actions of infinite groups on compact metrisable spaces of finite covering dimension have $\ZZ$-stable crossed product $C^*$-algebras. Therefore, almost finiteness should be viewed as a dynamical analogue of $\ZZ$-stability, at least for amenable groups acting on compact metric spaces.

It then becomes natural to ask to what extent the methods of \cite{Kerr} apply to actions of groupoids. The present work explores this question, generalising the foundations behind the classification result in \cite{Kerr} to the \'etale groupoid case and applying the new machinery to the groupoid arising from an aperiodic, repetitive tiling satisfying FLC. As part of this development, we extend the Ornstein-Weiss quasitiling theorem to \'etale groupoids, which could play into a more general $\ZZ$-stability result for \'etale groupoids.

Classification of tiling algebras was conjectured in \cite{Phillips}, and several subclasses of tiling algebras were subsequently shown to be classifiable. Our approach extends and unifies these previous results, which we now outline. The most general classification result prior to ours classifies all \textit{rational} tilings, those whose tiles are polyhedra with all vertices lying on rational coordinates. For such tilings, \cite[Lemma~5]{SW} implies that Kellendonk's algebra is isomorphic to $C(\Omega_{\textnormal{sq}})\rtimes\Z^d$, where $\Omega_{\textnormal{sq}}$ is the hull of a tiling where all prototiles are cubes. Such $C^*$-algebras were classified by Winter in \cite[Corollary~3.2]{Win1} using the nuclear dimension estimates of Szab\'o \cite[Theorem~5.3]{Sza}. Subsequently, Deeley and Strung \cite{DS} proved that the stable and unstable algebras of Smale spaces containing projections are classifiable using the dynamic asymptotic dimension of Guentner, Willett and Yu \cite{GWY}. Such Smale spaces include all substitution tilings satisfying our standard hypotheses. We note that the work in this paper was originally inspired by Deeley and Strung's results. We had hoped that Deeley and Strung's techniques would extend to all aperiodic tilings, however, we were not able to prove that certain subgroupoids along so called ``fault lines" in tilings were relatively compact. Finally, \cite[Example 9.6]{HSWW} uses a Rohklin dimension argument to prove that all one-dimensional tilings with our standard hypotheses are classifiable.

The paper is organised as follows. In Section~\ref{sec:afgroupoidactions}, we present a few preliminary facts about groupoids, and establish notation. We then explore groupoid actions, and introduce a new, but associated, notion of almost finiteness. This generalises Kerr's almost finiteness from the group case, and links back to Matui's original definition for groupoids \cite{Matui}. In section~\ref{sec:groupoidOW}, we prove an analogue of the Ornstein-Weiss quasitiling theorem for \'etale groupoids. In Section~\ref{sec:gpoidcrossedproducts}, we review the theory of crossed products arising from groupoid actions. Sections~\ref{tilings} and \ref{efun} contain a brief introduction to tilings, and the construction of tiling groupoids and their $C^*$-algebras. We show that these tiling groupoids are almost finite (in the sense of \cite{Matui}) in Section~\ref{sec:tilinggroupoidaf}. Following the results of Section~\ref{sec:afgroupoidactions}, we are then able to show that tiling groupoids arise from almost finite groupoid actions, and we spend some time constructing the associated structure concretely. Section~\ref{sec:tilingZstable} contains our main result on the $\ZZ$-stability of tiling $C^*$-algebras. We give a direct proof that tiling $C^*$-algebras are quasidiagonal in Section~\ref{sec:quasidiagonal}, along with some examples of tilings whose $C^*$-algebras have unique trace, and so benefit from the more direct route to classification that this property provides. 

We note that the results in this paper made up the bulk of the first author's PhD thesis \cite{Ito}, which also contains extensive background information on various aspects of this article.

\begin{Acknowledgements}
We are grateful to Charles Starling, Gabor Szab\'o and Stuart White for their helpful comments and mathematical insights.
\end{Acknowledgements}

\section{Almost finiteness for groupoid actions}\label{sec:afgroupoidactions}

For an introduction to \'etale groupoids and their $C^*$-algebras, see \cite{Sims}. 
We typically denote groupoids by $G$, with unit space $G^{(0)}$ and $r,s:G\rightarrow G^{(0)}$ the range and source maps of $G$. A subset $V$ of a groupoid $G$ is said to be a \textit{$G$-set} if both the range and source maps are injective on $V$. 
For $A\subset G$ and $g\in G$, let
\[Ag\coloneqq\{ag\mid s(a)=r(g)\},\]
so that if $r(g)\notin s(A)$, we obtain $Ag=\emptyset$. Similarly, for subsets $A,B\subset G$, we denote
\[AB\coloneqq\{ab\mid a\in A, b\in B, s(a)=r(b)\}.\]
This will be particularly useful when we consider $A\subset G$ and $u\in G^{(0)}$, and obtain the simplified notation
\[Au:=A\cap G_u:=\{a\in A\mid s(a)=u\}\]
and
\[uA:=A\cap G^u:=\{a\in A\mid r(a)=u\}.\]
A topological groupoid is \textit{\'etale} if the range and source maps are local homeomorphisms, and is \textit{ample} if it is \'etale and $G^{(0)}$ is zero dimensional. A \textit{bisection} is a subset $B$ of a topological groupoid such that there exists an open set $U$ containing $B$ such that $r:U\rightarrow r(U)$ and $s:U\rightarrow s(U)$ are homeomorphisms. 
Since the topology on any \'etale groupoid has a base of open bisections, when $G$ is \'etale and $A\subset G$ is compact, there exists $N\in\N$ such that $|Au|<N$ for all $u\in G^{(0)}$. We will use this fact frequently.

We introduce the notion of almost finiteness for actions of groupoids, generalising the definition for group actions \cite[Definition~8.2]{Kerr}. We first make precise what it means for a groupoid to act on a set. To the best of our knowledge, the following notion first appears in \cite{Ehresmann}. The exact formulation presented here appears as \cite[Definition~1.55]{Goehle}.
\begin{definition}\label{gpoidaction}
Let $G$ be a groupoid and $X$ a set. We say that $G$ \textit{acts (on the left) of} $X$ if there is a surjection $r_X:X\rightarrow G^{(0)}$ and a map $(g,x)\mapsto g\cdot x$ from $G\ast X\coloneqq\{(g,x)\in G\times X\mid s(g)=r_X(x)\}$ to $X$ such that
\begin{enumerate}[label=(\roman*)]
\item if $(h,x)\in G\ast X$ and $(g,h)\in G^{(2)}$, then $(g, h\cdot x)\in G\ast X$ and
\[g\cdot(h\cdot x)=gh\cdot x;\textnormal{ and}\]
\item $r_X(x)\cdot x=x$ for all $x\in X$. 
\end{enumerate}
\end{definition}
We will usually drop the dot from the notation and refer to the image of $x\in X$ under $g\in G$ by $gx$.  We will preserve the dot in the case that $X=G^{(0)}$, to distinguish between groupoid multiplication $gx$, and the action $g\cdot x$. For $W\subset X$ and $S\subset G$, we denote
\[S\cdot W\coloneqq\{g\cdot x\mid g\in S,\ x\in W\textnormal{ and }s(g)=r_X(x)\}.\]
Again, we may drop the dot to denote this by $SW$. 

We make the following simple observation.
\begin{lemma}\label{range gx}
Let $G\curvearrowright X$ be any groupoid action. If $(g,x)\in G\ast X$, then $r_X(gx)=r(g)$.
\end{lemma}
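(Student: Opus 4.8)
The plan is to derive this directly from the action axioms in Definition~\ref{gpoidaction}; no topology or structure beyond the algebraic conditions is needed. The guiding idea is that $r(g)$, being a unit of $G$, acts as a left identity for $g$ under groupoid multiplication, and that feeding the trivial factorisation $g=r(g)\,g$ into the associativity axiom pins down the value of $r_X$ on $g\cdot x$.

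Concretely, I would set $u\coloneqq r(g)\in G^{(0)}$ and record the two purely groupoid-theoretic facts that $u$ is a unit, so $s(u)=r(u)=u$, and that $(u,g)\in G^{(2)}$ with $u g=g$. By hypothesis $(g,x)\in G\ast X$, so the membership part of axiom~(i) (applied with the composable pair $(u,g)$ in place of the pair $(g,h)$, and with the given $g$ playing the role of $h$) yields $(u,\,g\cdot x)\in G\ast X$. By the very definition of $G\ast X$, this membership means precisely $s(u)=r_X(g\cdot x)$. Since $s(u)=s(r(g))=r(g)$, we conclude $r_X(g\cdot x)=r(g)$, as required.

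There is essentially no serious obstacle here: the entire content is choosing the right composable pair and then reading off the source condition from the definition of $G\ast X$. The only points requiring a little care are the correct instantiation of the variables in axiom~(i) — the distinguished left element there must be taken to be $u=r(g)$, not $g$ itself — together with the standard groupoid identities $s(r(g))=r(g)$ and $r(g)\,g=g$ that make $(u,g)$ composable. Note also that only the membership clause of axiom~(i) is used; the compatibility equation $g\cdot(h\cdot x)=gh\cdot x$ and axiom~(ii) are not needed for this particular statement, although they confirm consistency via $r(g)\cdot(g\cdot x)=g\cdot x$.
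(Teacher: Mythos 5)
Your proof is correct and follows essentially the same route as the paper: apply the membership clause of axiom~(i) to a left factor composable with $g$ and read off the source condition defining $G\ast X$. The only (immaterial) difference is the choice of that left factor — you use the unit $r(g)$, while the paper uses $g^{-1}$, both of which have source $r(g)$ and yield the same conclusion.
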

\begin{proof}
Since $(g,x)\in G\ast X$ and $(g^{-1},g)\in G^{(2)}$, we have $(g^{-1},gx)\in G\ast X$ by condition (i) in Definition~\ref{gpoidaction}, so that $r(g)=s(g^{-1})=r_X(gx)$. 
\end{proof}
\begin{definition}
Let $G$ be a topological groupoid which acts on a topological space $X$.
\begin{enumerate}[label=(\roman*)]
\item (\cite[Definition~1.60]{Goehle}) We say that the action is \textit{continuous} if the maps $r_X:X\rightarrow G^{(0)}$ and $(g,x)\mapsto gx$ from $G\ast X\rightarrow X$ are continuous. 
\item We say that the action is \textit{free} if, for every $x\in X$, $gx=x$ implies $g=r_X(x)$.
\item We say that the action is \textit{minimal} if, for every $x\in X$, the subset $\{gx\mid g\in G\}$ is dense in $X$. 
\end{enumerate}
\end{definition}
We remark that when $G$ acts continuously on a compact space $X$, then $G^{(0)}$ arises as the continuous image of the compact space $X$ under the surjection $r_X:X\rightarrow G^{(0)}$. It is therefore necessary that $G^{(0)}$ be compact in this situation. 
\begin{lemma}\label{etalestrongcontinuity}
Let $G$ be a locally compact Hausdorff \'etale groupoid acting continuously on a locally compact Hausdorff space $X$. Then the range map $r_X:X\rightarrow G^{(0)}$ is open.
\end{lemma}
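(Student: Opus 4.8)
The plan is to prove that $r_X$ carries open sets to open sets by combining the étale structure of $G$ with the translation identity $r_X(gx)=r(g)$ of \lemref{range gx}. Since $G$ is étale, both $r$ and $s$ are local homeomorphisms, hence open, and the topology of $G$ has a base of open bisections; these are the only structural facts about $G$ I would use. It suffices to fix an open $U\subseteq X$ and a point $x\in U$ with $u\coloneqq r_X(x)$ and to exhibit an open neighbourhood of $u$ contained in $r_X(U)$.

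First I would record that the action map $a\colon G\ast X\to X$, $(g,y)\mapsto gy$, is open. Indeed $G\ast X$ is by definition the fibred product of $s\colon G\to G^{(0)}$ and $r_X\colon X\to G^{(0)}$, so the second projection $\pi_2\colon G\ast X\to X$ is the pullback of the open map $s$ and is therefore open; since $\Psi(g,y)=(g^{-1},gy)$ is a homeomorphism of $G\ast X$ (equal to its own inverse, using \defref{gpoidaction}) and $\pi_2\circ\Psi=a$, the map $a$ is open. Feeding \lemref{range gx} into the first projection gives the commuting relation $r_X\circ a=r\circ\pi_1$, where $\pi_1\colon G\ast X\to G$ is the projection onto $G$. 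Because $a$ is a continuous open surjection (surjectivity from $a(r_X(y),y)=y$), every open $U\subseteq X$ satisfies $a^{-1}(U)$ open and $r_X(U)=r\big(\pi_1(a^{-1}(U))\big)$.

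The main obstacle is the openness of $\pi_1$. Being the pullback of $r_X$ rather than of the open map $s$, $\pi_1$ is open essentially if and only if $r_X$ is, so the displayed identity is not by itself conclusive and the étale hypothesis must be used to break the circularity. The concrete task is a lifting statement: every unit $v$ in a suitable neighbourhood of $u$ should be realised as $r_X(y)$ for some $y\in U$. To produce such $y$ I would use open bisections as local sections of the source map: for an open bisection $B$, sending a point $y$ with $r_X(y)\in s(B)$ to $b\cdot y$, where $b$ is the unique element of $B$ with $s(b)=r_X(y)$, defines a homeomorphism $\beta_B$ between the open sets $r_X^{-1}(s(B))$ and $r_X^{-1}(r(B))$ satisfying $r_X\circ\beta_B=(r|_B)\circ(s|_B)^{-1}\circ r_X$; choosing $B$ adapted to $u$ and invoking continuity of the action to keep the $\beta_B$-images inside $U$ should convert the open neighbourhood $s(B)$ of units into an open neighbourhood of $u$ lying in $r_X(U)$. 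I expect the delicate point—and the step that genuinely requires the local homeomorphism property rather than mere continuity—to be checking that these lifts sweep out a full neighbourhood of $u$, and not merely a discrete set of nearby units.
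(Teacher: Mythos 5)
The preliminary part of your argument is correct, and is in fact a cleaner observation than anything in the paper's own (quite different) proof: the flip $\Psi(g,y)=(g^{-1},gy)$ is an involutive homeomorphism of $G\ast X$ by Lemma~\ref{range gx}, the projection $\pi_2$ is open because it is the base change of the open map $s$, hence the action map $a=\pi_2\circ\Psi$ is open, and $r_X\circ a=r\circ\pi_1$. You also correctly diagnose that this alone is circular, since openness of $\pi_1$ is essentially the openness of $r_X$ you are after. The genuine gap is that the bisection device you propose to break the circularity cannot do so, and the identity you yourself record, $r_X\circ\beta_B=(r|_B)\circ(s|_B)^{-1}\circ r_X$, is exactly the reason: if, by continuity, you arrange that $\beta_B$ maps $V\cap r_X^{-1}(s(B))$ into $U$ for some open $V\ni x$, then the set of units you have exhibited inside $r_X(U)$ is $(r|_B)\circ(s|_B)^{-1}\bigl(r_X(V)\cap s(B)\bigr)$, a homeomorphic image of $r_X(V)\cap s(B)$ inside $G^{(0)}$. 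To know that this contains a neighbourhood of $u$ you must already know that $r_X(V)$ is a neighbourhood of $u=r_X(x)$ --- precisely the openness of $r_X$ being proved. So the step you flag as ``delicate'' is where the entire content of the lemma sits, and it is left unproved.

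Moreover, no elaboration of this strategy can close it: translating by bisections only post-composes $r_X$ with partial homeomorphisms between open subsets of $G^{(0)}$, so it can never manufacture openness that $r_X$ does not already have. The degenerate case $G=G^{(0)}$ (only unit arrows; still a locally compact Hausdorff \'etale groupoid) makes this concrete: every $\beta_B$ is then the identity on an open set, your construction returns only subsets of $r_X(U)$ itself, and the hypotheses of the lemma reduce to ``$r_X$ is a continuous surjection'', which by itself does not force openness. The missing ingredient therefore has to enter from outside the tools you allow yourself (continuity of the action plus the \'etale structure of $G$ acting along orbits); in the paper it enters through a completely different route, namely passing to the transformation groupoid ($G\ltimes X$ is \'etale by \cite[Proposition~1.72]{Goehle}), viewing $U$ as an open subset of $(G\ltimes X)^{(0)}$, and projecting onto the $G$-coordinate. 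As it stands, your proposal establishes the (useful) openness of the action map, but not the statement of the lemma.
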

\begin{proof}
By \cite[Proposition~1.72]{Goehle}, $G\ltimes X$ is \'etale. Suppose that $U\subset X$ is open. We identify $U\subset X$ with $V=\{(r_X(u),u)\mid u\in U\}\subset (G\ltimes X)^{(0)}$. Since $U$ is open in $X$ and $X\cong(G\ltimes X)^{(0)}$ is open in $G\ltimes X$, it follows that $V\cong U$ is open in $G\ltimes X$. Observe that $r_X(U)=\pi_G(V)$, where $\pi_G$ is projection to the $G$-coordinate on $G\times X$. Since $\pi_G$ is an open map, this shows that $r_X(U)$ is open in $G$.
\end{proof}
The following notion of dynamical comparison is a generalisation of \cite[Definition~3.1]{Kerr} to the groupoid setting. 

\begin{definition}
Let $G\curvearrowright X$ be a groupoid action, and let $A,B\subset X$. We write $A\prec B$ if, for every closed $C\subset A$, there exist a finite collection $\UU$ of open subsets of $X$ which cover $C$, and a subset $S_U\subset G$ for each $U\in\UU$ such that $r_X(U)\subset s(S_U)$ so that the collection $\{tU\mid U\in\UU, t\in S_U\}$ consists of pairwise disjoint subsets of $B$.  
\end{definition}

We now present a construction of a groupoid from a groupoid action. Such groupoids will be vitally important later in the paper.
\begin{definition}
Let $G$ be a groupoid acting on a set $X$. The associated \textit{transformation groupoid} $G\ltimes X$ is the set $G\ast X$ with the following structure. The set of composable pairs is
\[(G\ltimes X)^{(2)}=\{((g,x),(h,y))\in(G\ltimes X)\times(G\ltimes X)\mid h\cdot y=x\}.\]
The product of such a pair is given by $(g,x)(h,y)=(gh, y)$. The inverse operation is $(g,x)^{-1}=(g^{-1},gx)$. 

In the case that $G$ is a topological groupoid acting on a topological space $X$, $G\ltimes X=G\ast X$ inherits the relative topology from the product topology on $G\times X$.
\end{definition}
We can compute the missing structure maps as follows. The range and source of $(g,x)\in G\ltimes X$ are given by
\[s(g,x)=(g,x)^{-1}(g,x)=(g^{-1},gx)(g,x)=(g^{-1}g,x)=(s(g),x)=(r_X(x),x)\]
and
\[r(g,x)=(g,x)(g,x)^{-1}=(gg^{-1},gx)=(r(g),gx)=(r_X(gx),gx).\]
Hence, we see that $(G\ltimes X)^{(0)}$ can be naturally identified with $X$ using the map $(r_X(x),x)\mapsto x$. Under this identification, $s(g,x)=x$ and $r(g,x)=gx$. From all this, it is easy to see that when $G$ is a group, we recover the usual notion of a transformation groupoid.

The following observations about the topology of groupoid actions will be useful.
\begin{lemma}\label{actiontopology}
Let $G$ be a locally compact Hausdorff groupoid which acts continuously on a locally compact Hausdorff space $X$. Then
\begin{enumerate}[label=(\roman*)]
\item if $W\subset X$ and $S\subset G$ are compact, then $S\cdot W$ is compact in $X$;
\item \cite[Proposition 1.72]{Goehle} if $G$ is \'etale, then $G\ltimes X$ is \'etale; and
\item if $G$ is \'etale and $W\subset X$ and $S\subset G$ are open, then $S\cdot W$ is open in $X$.
\end{enumerate}
\end{lemma}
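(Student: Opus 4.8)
The plan is to prove the three statements of Lemma~\ref{actiontopology} in order, using the transformation groupoid $G\ltimes X$ as the central tool. Throughout I will freely use the identification $(G\ltimes X)^{(0)}\cong X$ established above, under which $s(g,x)=x$ and $r(g,x)=gx$.

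For part~(i), I would first observe that $S\ast W:=(S\times W)\cap(G\ast X)$ is a closed subset of the compact set $S\times W$ (it is the preimage of the diagonal under the continuous map $(g,x)\mapsto(s(g),r_X(x))$ into the Hausdorff space $G^{(0)}\times G^{(0)}$), hence is itself compact. Then $S\cdot W$ is the image of $S\ast W$ under the continuous action map $(g,x)\mapsto g\cdot x$, and the continuous image of a compact set is compact.

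Part~(ii) is simply quoted from \cite[Proposition~1.72]{Goehle}, so no work is required; I would state it for completeness and for use in part~(iii).

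For part~(iii), I would exploit the range map of the transformation groupoid. Since $W$ is open in $X$ and $S$ is open in $G$, the set $S\ast W=(S\times W)\cap(G\ast X)$ is open in $G\ast X=G\ltimes X$. Now $S\cdot W$ is precisely the image of $S\ast W$ under the range map $r:G\ltimes X\to(G\ltimes X)^{(0)}\cong X$, because $r(g,x)=gx$. By part~(ii), $G\ltimes X$ is \'etale, so its range map is a local homeomorphism and in particular an open map; therefore $r(S\ast W)=S\cdot W$ is open in $X$. The main point to get right here is the bookkeeping that identifies $S\cdot W$ with the range-image of the open set $S\ast W$ under the established identification of the unit space with $X$; once that is in place the openness is immediate from \'etaleness. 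I do not anticipate a serious obstacle, as each part reduces to a standard topological fact (compactness of closed subsets of compact sets and continuity of the action map for~(i); openness of the range map of an \'etale groupoid for~(iii)) combined with the explicit description of the structure maps of $G\ltimes X$ computed above.
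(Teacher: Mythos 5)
Your proposal is correct and follows essentially the same route as the paper: part (i) is the continuous image of the compact set $(S\times W)\cap(G\ast X)$ under the action map (you just add the justification that this intersection is closed in $S\times W$, which the paper leaves implicit), part (ii) is quoted from Goehle, and part (iii) identifies $S\cdot W$ with the image of the open set $(S\times W)\cap(G\ltimes X)$ under the range map of the \'etale transformation groupoid, exactly as in the paper.
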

\begin{proof}
For (i), observe that $S\cdot W$ is the image of the compact subset $(S\times W)\cap (G\ast X)\subset G\ast X$ under the continuous action map $(g,x)\mapsto gx$, and is therefore compact in $X$.

For (ii), it is shown in \cite{Ren} that $G$ is \'etale if and only if it admits a Haar system and the range and source maps are open. In this case, by \cite[Proposition 1.72]{Goehle}, the range and source of $G\ltimes X$ are also open and $G\ltimes X$ admits a Haar system, so that $G\ltimes X$ is \'etale.

For (iii), as above, the range map of $G\ltimes X$ is open. Then $S\cdot W=r((S\times W)\cap(G\ltimes X))$ is open in $(G\ltimes X)^{(0)}\cong X$.\qedhere
\end{proof}

The notion of approximate invariance is of central importance to notions of almost finiteness, and there are many formulations for approximate invariance of subsets of groups (see \cite[Chapter~4]{Kerr-Li}, in particular Definition~4.32, and the paragraph immediately following Definition~4.34). All of these formulations are equivalent in the sense that if a sequence of subsets becomes arbitrarily approximately invariant in one formulation, then it does so in all of them. We extend this situation to groupoids.
\begin{lemma}\label{equivgpoidapproxinvce}
Let $G$ be an \'etale groupoid. Let $C,A \subset G$ be compact and nonempty, and let $\epsilon>0$. Consider the following conditions.
\begin{enumerate}[label=(\roman*), ref=(\roman*)]
\item (\cite[Definition~3.1]{Suzuki}; c.f. \cite[Definition~6.2]{Matui}) For every $u\in G^{(0)}$, \[\frac{|CAu\setminus Au|}{|Au|}<\epsilon.\]					\label{gpoidinvariant1}
\item For every $u\in G^{(0)}$, 
\[|\{a\in Au\mid Ca\subset Au\}|>(1-\epsilon)|Au|.\]										\label{gpoidinvariant2}
\end{enumerate}
Then (i) and (ii) are equivalent in the sense that if $\{A_n\}_{n\in\N}$ is a sequence of subsets such that for any compact $C\subset G$ and $\epsilon>0$, there exists $N\in\N$ such that $A_n$ satisfies either (i) or (ii) for $C$ and $\epsilon$ whenever $n\geq N$, then there exists $M\in\N$ such that $A_n$ satisfies the other condition for $C$ and $\epsilon$ whenever $n\geq M$. If $A\subset G$ satisfies either condition, we will say that $A$ is \textnormal{$(C,\epsilon)$-invariant}. 
\end{lemma}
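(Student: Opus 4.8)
The plan is to reduce the statement to two uniform counting inequalities between the quantities appearing in \ref{gpoidinvariant1} and \ref{gpoidinvariant2}, carried out fibrewise over $G^{(0)}$. Fix $u\in G^{(0)}$ and abbreviate $F=Au$. First I would record that $CAu=C(Au)$ and that every element of this set has source $u$, since $s(ca)=s(a)=u$ whenever $a\in Au$, so $CAu\subset G_u$. It is then natural to introduce the ``$C$-interior'' $F^{\circ}=\{a\in Au\mid Ca\subset Au\}$, so that \ref{gpoidinvariant1} reads $|CAu\setminus Au|<\epsilon|Au|$ and \ref{gpoidinvariant2} reads $|Au\setminus F^{\circ}|<\epsilon|Au|$. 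The whole lemma then becomes a comparison of $|CAu\setminus Au|$ with $|Au\setminus F^{\circ}|$, uniformly in $u$.

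The essential input is the uniform fibre bound recalled in the excerpt: since $C$ is compact and $G$ is \'etale there is $K\in\N$ with $|Cv|<K$ for all $v\in G^{(0)}$, and applying the same fact to the compact set $C^{-1}$ gives $|vC|<K$ (after enlarging $K$) for all $v$. That these bounds do not depend on $u$ is exactly what will let me pass between the two formulations at the cost of only a rescaling of $\epsilon$ by $K$.

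For the implication \Implies{gpoidinvariant2}{gpoidinvariant1} I would establish the inclusion $CAu\setminus Au\subset C(Au\setminus F^{\circ})$: if $g=ca$ with $a\in Au$ and $g\notin Au$, then $a\notin F^{\circ}$, since $a\in F^{\circ}$ would force $ca\in Au$. Counting the fibres of left multiplication by $C$ and using $|Cv|<K$ then yields $|CAu\setminus Au|\le K\,|Au\setminus F^{\circ}|$. For the reverse implication \Implies{gpoidinvariant1}{gpoidinvariant2} I would instead build a map $Au\setminus F^{\circ}\to CAu\setminus Au$ by sending each $a\notin F^{\circ}$ to $c_a a$, where $c_a\in C$ is a chosen witness with $s(c_a)=r(a)$ and $c_a a\notin Au$ (such a witness exists precisely because $Ca\not\subset Au$). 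Given a value $g$ in the image, any preimage $a$ satisfies $a=c^{-1}g$ for some $c\in C$ with $r(c)=r(g)$, so this map is at most $|r(g)C|<K$ to one, whence $|Au\setminus F^{\circ}|\le K\,|CAu\setminus Au|$.

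Since both inequalities hold for every $u$ with the same constant $K$, the asymptotic equivalence follows at once: given $C$ and $\epsilon$, one feeds $\epsilon/K$ into the hypothesis in order to reach the conclusion for $\epsilon$. I expect the only genuine subtlety to be the bookkeeping around degenerate fibres --- when $r(a)\notin s(C)$ the set $Ca$ is empty and $a$ lies in $F^{\circ}$ automatically, so such points never interfere --- together with checking that the witness map in the second implication is genuinely well defined. The load-bearing fact is really the uniform finiteness of the fibres of $C$, a feature of compact subsets of \'etale groupoids that here plays the role of the cancellation one would exploit in the group case.
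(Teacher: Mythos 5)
Your argument is correct and is essentially the paper's own proof: the witness map $a\mapsto c_a a$ with multiplicity bounded by $\sup_v|vC|$ is exactly the paper's \Implies{gpoidinvariant1}{gpoidinvariant2} step, and your inclusion $CAu\setminus Au\subset C(Au\setminus F^{\circ})$ with the fibre bound $\sup_w|Cw|$ is a slightly streamlined phrasing of its \Implies{gpoidinvariant2}{gpoidinvariant1} computation, followed by the same rescaling of $\epsilon$ by the uniform constant. No gaps.
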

\begin{remark}
It is important to note that $(C,\epsilon)$-invariance of a \textit{single} set $A$ in the sense of (i) is \textit{not} equivalent to $(C,\epsilon)$-invariance of $A$ in the sense of (ii). Indeed, it is easy to construct examples to show that neither implication holds. Therefore, our use of this terminology is rather imprecise. This is justified by the fact that for our purposes we always only care about \textit{arbitrary} approximate invariance. Still, we endeavour to make clear which defiition we are using at any time.
\end{remark}
\begin{proof}[Proof of Lemma~\ref{equivgpoidapproxinvce}]
First, notice that if $A,C\subset G$ are compact and $u\in G^{(0)}$, then
\[|\{a\in Au\mid Ca\subset Au\}|=|Au|-|\{a\in Au\mid Ca\not\subset Au\}|,\]
so that condition~\ref{gpoidinvariant2} is equivalent to
\[|\{a\in Au\mid Ca\not\subset Au\}|<\epsilon|Au|.\]
Therefore, it will suffice to find constants $c_1,c_2>0$ which depend only on $C$ such that
\[\frac{1}{c_1}|CAu\setminus Au|\leq|\{a\in Au\mid Ca\not\subset Au\}|\leq c_2|CAu\setminus Au|.\]

\Implies{gpoidinvariant1}{gpoidinvariant2}: Fix $A, C\subset G$ compact and $\epsilon>0$, and suppose that $A$ is $(C,\epsilon)$-invariant in the sense of condition~\ref{gpoidinvariant1}, so that 
\[|CAu\setminus Au|<\epsilon|Au|.\]
Set $c_2=\sup_{v\in G^{(0)}}|vC|$, which is finite as $C$ is compact. Notice that it is possible for distinct $a_1,a_2\in\{a\in Au\mid Ca\not\subset Au\}$ to admit $k_1,k_2\in C$ such that $k_1a_1=k_2a_2\in CAu\setminus Au$, and that this requires that $k_1\neq k_2$ are distinct, but share a range. Therefore this situation can occur for at most $c_2$ distinct elements $a_1,\ldots,a_{c_2}$. In other words, any map from $\{a\in Au\mid Ca\not\subset Au\}$ to $CAu\setminus Au$ defined by $a\mapsto k_aa$, where $k_a\in C$ is chosen such that $k_aa\notin Au$ is at most $c_2$-to-one. It follows that
\[|\{a\in Au\mid Ca\not\subset Au\}|\leq c_2|CAu\setminus Au|<c_2\epsilon|Au|,\]
so that $A$ is $(C, c_2\epsilon)$-invariant in the sense of condition~\ref{gpoidinvariant2}. 

\Implies{gpoidinvariant2}{gpoidinvariant1}: This time, suppose that $A$ is $(C,\epsilon)$-invariant in the sense of \ref{gpoidinvariant2}, so that
\[|\{a\in Au\mid Ca\not\subset Au\}|<\epsilon|Au|,\]
and let $c_1=\sup_{w\in G^{(0)}}|Cw|>0$, which is again finite since $C$ is compact. Observe that
\begin{align*}
|CAu\setminus Au|&=\left|\bigcup_{w\in r(Au)}\bigcup_{c\in Cw} \{ca\mid a\in wAu, ca\notin Au\}\right|\\
&\leq\sum_{w\in r(Au)}\sum_{c\in Cw} |\{cau\mid a\in wAu, ca\notin Au\}|.
\end{align*}
Now, for each fixed $w\in r(Au)$, the second sum contributes $|Cw|\leq c_1$ terms. Each of these terms corresponds to some $c\in Cw$, and counts the $a\in wAu$ for which $ca\notin Au$. Therefore, if $a\in wAu$ is such that $Ca\subset Au$, it will never be counted in this sum, whereas if $a\in wAu$ has $Ca\not\subset Au$, it can be counted at most $|Cw|\leq c_1$ times. Thus, we obtain
\begin{align*}
|CAu\setminus Au|&\leq\sum_{w\in r(Au)}\sum_{c\in Cw} |\{cau\mid a\in wAu, ca\notin Au\}|\\
&\leq\sum_{w\in r(Au)}|Cw||\{a\in wAu\mid Ca\not\subset Au\}|\\
&\leq c_1\sum_{w\in r(Au)}|\{a\in wAu\mid Ca\not\subset Au\}|\\
&= c_1\left|\bigsqcup_{w\in r(Au)}\{a\in wAu\mid Ca\not\subset Au\}\right|\\
&= c_1|\{a\in Au\mid Ca\not\subset Au\}|\\
&< c_1\epsilon|Au|.
\end{align*}
showing that $A$ is $(C, c_1\epsilon)$-invariant in the sense of condition~\ref{gpoidinvariant1}.
\end{proof}

The following definition is easily seen to generalise the concepts in \cite{Kerr}.
\begin{definition}\label{defgroupoidactionaf}
Suppose that $G$ is a locally compact Hausdorff \'etale groupoid, and that $X$ is a compact metric space. Let $\alpha:G\curvearrowright X$ be a continuous action. Let $C\subset G$ be compact, and let $\epsilon>0$. 
\begin{itemize}
\item A \textit{tower} of $\alpha$ is a pair $(W,S)$ consisting of a nonempty subset $W\subset X$ and a nonempty compact open subset $S\subset G$ such that $S=\bigsqcup_{j=1}^N S_j$ decomposes into compact open $S_j$ with $s(S_j)=r_X(W)$ for each $j$, so that the range and source maps are injective on each $S_j$, and such that the sets $S_jW$ are pairwise disjoint for $j\in\{1,\ldots,N\}$. We refer to the set $W$ as the \textit{base}, $S$ as the \textit{shape}, and the sets $S_jW$ as the \textit{levels} of the tower.

\item Consider a finite collection $\{(W_1,S_1),\ldots,(W_n,S_n)\}$ of towers such that for each $i\in\{1,\ldots,n\}$, the shape of the $i$-th tower has decomposition $S_i=\bigsqcup_{j=1}^{N_i} S_{i,j}$. Such a sequence is called a \textit{castle} if the collection of all tower levels $\{S_{i,j}W_i\mid i\in\{1,\ldots,n\}, j\in\{1,\ldots,N_i\}\}$ is pairwise disjoint.
The sets $W_i$ are called the \textit{bases}, $S_i$ the \textit{shapes}, and $S_{i,j}W_i$ the \textit{levels} of the castle $\{(W_i,S_i)\}_{i=1}^n$. 
A castle is called a \textit{tower decomposition of $\alpha$} if the levels of the castle partition $X$. 

\item A castle $\{(W_i,S_i)\}_{i=1}^n$ is said to be \textit{$(C,\epsilon)$-invariant} if all of the shapes $S_1,\ldots, S_n$ are $(C,\epsilon)$-invariant subsets of $G$, in the sense defined by condition~\ref{gpoidinvariant2} from Lemma~\ref{equivgpoidapproxinvce}. 

\item A castle $\{(W_i,S_i)\}_{i=1}^n$ is said to be \textit{open} (respectively \textit{closed}, \textit{clopen}) if each $W_i$ is open (respectively closed, clopen) in $X$. 

\item A groupoid action is said to be \textit{almost finite} if, for every $m\in\N$, every compact $C\subset G$, and every $\epsilon>0$, there exist
\begin{enumerate}[label=(\roman*)]
\item an open castle $\{(W_i,S_i)\}_{i=1}^n$ whose shapes are $(C,\epsilon)$-invariant, and whose levels have diameter less than $\epsilon$; and

\item sets $S_i'\subset S_i$ for each $i\in\{1,\ldots,n\}$ such that, for each $u\in G^{(0)}$, $|S_i'u|<|S_iu|/m$, so that $(W_i,S_i')$ is a \textit{subtower} of $(W_i,S_i)$, in the sense that if $S_i=\bigsqcup_{j=1}^{N_i}S_{i,j}$, then there exist $L_i\in\N$ and a subcollection $\{j_{i,1},\ldots,j_{i,L_i}\}\subset\{1,\ldots,N_i\}$ such that $S_i'=\bigsqcup_{l=1}^{L_i}S_{i,j_{i,l}}$, and such that
\[X\setminus\bigsqcup_{i=1}^nS_iW_i\prec\bigsqcup_{i=1}^nS_i'W_i.\]
\end{enumerate}
\end{itemize}
\end{definition}
In a tower $(W,S)$, where $S=\bigsqcup_{j=1}^N S_j$, each set $S_j$ should be thought of as corresponding to a single group element from \cite[Definition~4.1]{Kerr}. The injectivity condition of the source and range together with the condition that $s(S_j)=r_X(W)$ ensure that each element of $W$ has exactly one image under the action of $S_j$. Aside from this modification to ensure compatibility with the fibred structure of the groupoid, the definition is identical in spirit to \cite[Definition~8.2]{Kerr}. 

We show that the diameter condition appearing in the statement is no obstacle in our case of interest. In the group setting, closedness of the towers comes with no loss of generality by \cite[Lemma~10.1]{Kerr}. We conjecture that this should also be true in the groupoid setting, but we do not have a proof, so it appears as an assumption in the following result. 
Another lingering question to consider is whether the castles witnessing almost finiteness can always be chosen to partition $X$ when $X$ is totally disconnected (c.f. \cite[Theorem~10.2]{Kerr}).

\begin{lemma}[c.f. {\cite[Theorem~10.2]{Kerr}}]\label{amplediameter}
Let $G$ be a locally compact Hausdorff \'etale groupoid, $X$ a totally disconnected 
compact metric space, and $\alpha:G\curvearrowright X$ a free and continuous action which admits arbitrarily invariant \textbf{clopen} castles, as in the definition of almost finiteness, but which do not necessarily satisfy the diameter condition. Then $\alpha$ is almost finite. That is, we can choose the invariant clopen castles to satisfy the diameter condition, with no loss of generality. 
\end{lemma}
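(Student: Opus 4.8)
The plan is to leave the combinatorial skeleton of the given clopen castles untouched and merely to shrink the physical size of their levels, by cutting each base into sufficiently small clopen pieces. The only genuinely new ingredient is supplied by the topology of $X$: since $X$ is totally disconnected and compact, it admits a finite clopen partition $\mathcal{Q}$ every member of which has diameter less than $\epsilon$. Everything else will be transported along the level maps of the given castle.

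So fix a clopen castle $\{(W_i,S_i)\}_{i=1}^n$ with $(C,\epsilon)$-invariant shapes $S_i=\bigsqcup_{j=1}^{N_i}S_{i,j}$, together with subtower shapes $S_i'\subset S_i$ witnessing the remaining requirements of almost finiteness, but whose levels need not be small. For each $i,j$ the compact open bisection $S_{i,j}$ induces a map $\phi_{i,j}\colon W_i\to S_{i,j}W_i$ sending $x$ to $g\cdot x$, where $g\in S_{i,j}$ is the unique element with $s(g)=r_X(x)$. Using that $r,s$ are injective on $S_{i,j}$ together with \lemref{range gx} and the action axioms, one checks that $\phi_{i,j}$ is a homeomorphism onto the level $S_{i,j}W_i$; in particular it is continuous and injective.

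Now I would refine each base. For fixed $i$, let $\mathcal{W}_i$ be the common refinement, over $j\in\{1,\dots,N_i\}$, of the finite clopen partitions $\{\phi_{i,j}^{-1}(Q)\mid Q\in\mathcal{Q}\}$ of $W_i$. Each $V\in\mathcal{W}_i$ is clopen, and by construction $\phi_{i,j}(V)=S_{i,j}V$ lies inside a single member of $\mathcal{Q}$, hence has diameter less than $\epsilon$. To each such $V$ I attach the tower $(V,S_i^V)$ whose shape is the restriction $S_i^V=S_i\cap s^{-1}(r_X(V))$, decomposed as $S_{i,j}^V=S_{i,j}\cap s^{-1}(r_X(V))$; this restriction is forced by the tower axiom $s(S_{i,j}^V)=r_X(V)$, and the levels of the new tower are exactly the sets $S_{i,j}V$, which now have diameter less than $\epsilon$. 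The family $\{(V,S_i^V)\mid 1\le i\le n,\ V\in\mathcal{W}_i\}$ is then a clopen castle: two of its levels either subdivide disjoint levels of the original castle, or are images of disjoint sets under a single injective $\phi_{i,j}$, and are therefore disjoint.

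It remains to transfer the invariance and the subtower data, and this is the only point I would write out with care. Setting $(S_i')^V=S_i'\cap s^{-1}(r_X(V))$, the crucial observation is that restricting a shape to the sub-base $V$ alters no nonempty fibre: for $u\in r_X(V)$ one has $S_i^V u=S_i u$ and $(S_i')^V u=S_i'u$, so both the $(C,\epsilon)$-invariance in the sense of \ref{gpoidinvariant2} and the cardinality bound $|(S_i')^V u|<|S_i^V u|/m$ are inherited verbatim from the original castle, while for $u\notin r_X(V)=s(S_i^V)$ the relevant fibres are empty and the conditions hold vacuously. Finally, because the pieces of $\mathcal{W}_i$ partition $W_i$, we have $\bigsqcup_{V\in\mathcal{W}_i}(S_i')^V V=S_i'W_i$ and $\bigsqcup_{i,V}S_i^V V=\bigsqcup_{i=1}^n S_iW_i$, so the refined remainder $X\setminus\bigsqcup_{i,V}S_i^V V$ coincides with the old remainder $X\setminus\bigsqcup_i S_iW_i$, and the subequivalence $X\setminus\bigsqcup_i S_iW_i\prec\bigsqcup_i S_i'W_i$ carries over unchanged. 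The main obstacle is thus not any hard analysis but the bookkeeping of these shape restrictions: one must be sure that passing from $S_i$ to $S_i^V$ empties fibres rather than shrinking them, which is exactly what the identity $S_i^V u=S_i u$ for $u\in r_X(V)$ guarantees.
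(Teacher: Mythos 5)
Your proof is correct and takes essentially the same approach as the paper: refine the bases into small clopen pieces, attach the restricted shapes $S_i\cap s^{-1}(r_X(V))$, and transfer the $(C,\epsilon)$-invariance and the subtower data through the fibrewise identity $S_i^V u = S_i u$ for $u\in r_X(V)$ (empty fibres being vacuous). The only difference is organisational: you produce the refinement in one step as the common refinement of the pullbacks of a single global clopen partition of $X$ under the level homeomorphisms $\phi_{i,j}$, whereas the paper constructs the same kind of refinement iteratively, one level at a time.
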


\begin{proof}
Let $C\subset G$ be compact, let $\epsilon>0$, and let $(W,S)$ be a $(C,\epsilon)$-invariant clopen tower as in the statement of the lemma, so that $S=\bigsqcup_{i=1}^m S_i$, where $S_i$ is compact and open in $G$. We first show that we can refine $(W,S)$ into a clopen castle whose levels have diameter smaller than $\epsilon$. The bases of the towers in the castle will partition $W,$ and the shape of each new tower will simply be the subset of $S$ consisting of arrows which act on its base. Since $S$ is $(C,\epsilon)$-invariant, it will follow that the shape of each new tower is also $(C,\epsilon)$-invariant. 

Consider the first tower level $S_1\cdot W.$ Since $X$ is totally disconnected, it has a basis of clopen sets. Therefore, since $S_1\cdot W$ is compact and open, 
there exists a finite collection of clopen subsets $\{U_{n_1}\}_{n_1=1,\ldots,N}$ of $X$ with diameter smaller than $\epsilon$ such that $S_1\cdot W=\bigcup_{n_1=1}^N U_{n_1}$. 
By replacing $U_{n_1}$ by $U_{n_1}\setminus\bigcup_{j=1}^{n_1-1}U_j$ for each $n_1=1,2,\ldots,N$ in turn, we may assume without loss of generality that the collection $\{U_{n_1}\}_{n_1=1,\ldots,N}$ is pairwise disjoint.

For each $n_1\in\{1,\ldots,N\}$, let $V_{n_1}=S_1^{-1}\cdot U_{n_1}$, which is a compact open subset of $X$ by Lemma~\ref{actiontopology}. Observe that $S_1\cdot V_{n_1}=U_{n_1}$ since $r_X(U_{n_1})\subset r(S_1)=s(S_1^{-1})$. 
Furthermore, since the collection $\{U_{n_1}\}_{n_1=1,\ldots,N}$ is pairwise disjoint and the source map is injective on $S_1$, the collection $\{V_{n_1}\}_{n_1=1,\ldots,N}$ is pairwise disjoint, and we have $W=S_1^{-1}\cdot\bigsqcup_{n_1=1}^N U_{n_1} = \bigsqcup_{n_1=1}^N S_1^{-1}\cdot U_{n_1} = \bigsqcup_{n_1=1}^N V_{n_1}$ so that the collection $\{V_{n_1}\}_{n_1=1,\ldots,N}$ partitions $W.$ 
Associate to $V_{n_1}$ the subset $H_{n_1}$ of $S_1$ which is able to act upon it. That is, put $H_{n_1}=S_1\cap s^{-1}(r_X(V_{n_1}))$, so that $H_{n_1}\cdot V_{n_1}=S_1\cdot V_{n_1}=U_{n_1}$. Observe that $H_{n_1}$ is compact and open because $r_X$ is continuous and open by Lemma~\ref{etalestrongcontinuity} and $s$ is a local homeomorphism.

In this manner, we have created $N$ ``single-level'' towers $(V_{n_1},H_{n_1})$ for $n_1\in\{1,\ldots,N\}$ whose levels have diameter smaller than $\epsilon$ and partition $S_1\cdot W,$ and such that for each $n_1\in\{1,\ldots,N\}$ and each $x\in V_{n_1}$, we have $H_{n_1}r_X(x)=S_1r_X(x)$. 

Now, for each $n_1=1,\ldots,N$ in turn, consider the subset $S_2\cdot V_{n_1}$ of $S_2\cdot W.$ Arguing similarly as above, construct a cover of $S_2\cdot V_{n_1}$ by pairwise disjoint clopen subsets $\{U_{n_1,n_2}\}_{n_2=1,\ldots,N_{n_1}}$ of $X$ with diameter smaller than $\epsilon$. For each $n_2\in\{1,\ldots,N_{n_1}\}$ put $V_{n_1,n_2}=S_2^{-1}\cdot U_{n_1,n_2}$ and $H_{(n_1,n_2)}=(H_{n_1}\cup S_2)\cap s^{-1}(r_X(V_{n_1,n_2}))$ so that $V_{n_1,n_2}$ and $H_{(n_1,n_2)}$ are both compact and open, and so that $\{V_{n_1,n_2}\}_{n_2=1,\ldots,N_{n_1}}$ partitions $V_{n_1}$. Define $H_{(n_1,n_2),1}=S_1\cap H_{(n_1,n_2)}$ and $H_{(n_1,n_2),2}=S_2\cap H_{(n_1,n_2)}$ so that $H_{(n_1,n_2)}=\bigsqcup_{i=1}^2 H_{(n_1,n_2),i}$. Observe that $H_{(n_1,n_2),1}\cdot V_{n_1,n_2}\subset U_{n_1}$ and $H_{(n_1,n_2),2}\cdot V_{n_1,n_2}\subset U_{n_1,n_2}$ both have diameter smaller than $\epsilon$. In this way, for each $n_1\in\{1,\ldots,N\}$ we construct $N_{n_1}$ ``two-level'' towers $(V_{n_1,n_2},H_{(n_1,n_2)})$ for $n_2\in\{1,\ldots,N_{n_1}\}$. The collection of all of the levels of these towers is pairwise disjoint and partitions $(S_1\cup S_2)\cdot W,$ each level has diameter smaller than $\epsilon$, and for each $x\in V_{n_1,n_2}$ we have $H_{(n_1,n_2)}r_X(x)=(S_1\cup S_2)r_X(x)$. 

Continue to iterate this procedure, at the $k$-th step beginning with a collection of towers $(V_{n_1,n_2,\ldots,n_{k-1}}, H_{(n_1,n_2,\ldots,n_{k-1})})$ for $n_1\in\{1,\ldots,N\}$, $n_2\in\{1,\ldots,N_{n_1}\}$, $\ldots$ , $n_{k-1}\in\{1,\ldots,N_{n_1,\ldots,n_{k-2}}\}$, and covering $S_k\cdot V_{n_1,n_2,\ldots,n_{k-1}}$ by finitely many pairwise disjoint clopen subsets $\{U_{n_1,\ldots,n_k}\}_{n_k=1,\ldots,N_{n_1,\ldots,n_{k-1}}}$ of $X$ with diameter smaller than $\epsilon$. 
Set $V_{n_1,\ldots,n_k}=S_k^{-1}\cdot U_{n_1,\ldots,n_k}$ so that the collection $\{V_{n_1,\ldots,n_k}\}_{n_k=1,\ldots,N_{n_1,\ldots,n_{k-1}}}$ partitions $V_{n_1,\ldots,n_{k-1}}$. 
Also set $H_{(n_1,\ldots,n_k)}=(H_{(n_1,\ldots,n_{k-1})}\cup S_k)\cap s^{-1}(r_X(V_{n_1,\ldots,n_k}))$ (with decomposition $H_{(n_1,\ldots,n_k),i}=S_i\cap H_{(n_1,\ldots,n_k)}$ for each $i\in\{1,\ldots,k\}$) to obtain a collection of ``$k$-level'' towers $\{(V_{n_1,\ldots,n_k}, H_{(n_1,\ldots,n_k)})\}$ indexed by $n_1$ up to $n_k$ such that the collection of all tower levels is pairwise disjoint and partitions $(S_1\cup\cdots\cup S_k)\cdot W$, each level of each tower has diameter smaller than $\epsilon$, and so that for each $x\in V_{n_1,\ldots,n_k}$ we have $H_{(n_1,\ldots,n_k)}r_X(x)=(S_1\cup\cdots\cup S_k)r_X(x)$. 

The procedure terminates after $m$ steps (where $m$ is the number of levels in the tower $(W,S)$) to produce a clopen castle $\{(V_{n_1,\ldots,n_m},H_{(n_1,\ldots,n_m)})\}$, where $n_1\in\{1,\ldots,N\}$, $n_2\in\{1,\ldots,N_{n_1}\}$, $\ldots$ , $n_m\in\{1,\ldots, N_{n_1,\ldots,n_{m-1}}\}$, such that every level of every tower has diameter smaller than $\epsilon$. The levels of this castle will partition $(S_1\cup\cdots\cup S_m)\cdot W=S\cdot W.$ 
For each $x\in V_{n_1,\ldots,n_m}=r_X^{-1}(s(H_{(n_1,\ldots,n_m)}))$ we have $H_{(n_1,\ldots,n_m)}r_X(x)=(S_1\cup\cdots\cup S_m)r_X(x)=Sr_X(x)$, so that for each $u\in s(H_{(n_1,\ldots,n_m)})$ we can use $(C,\epsilon)$-invariance of $S$ to obtain 
\[
|CH_{(n_1,\ldots,n_m)}u\setminus H_{(n_1,\ldots,n_m)}u|=|CSu\setminus Su|<\epsilon|Su|=\epsilon|H_{(n_1,\ldots,n_m)}u|,
\]
which shows that $H_{(n_1,\ldots,n_m)}$ is $(C,\epsilon)$-invariant.

It remains to show that given a clopen castle $\{(W_j,S_j)\}_{j=1,\ldots,J}$ witnessing condition (ii) of almost finiteness, the clopen castle $\{(V^{(j)}_{n_1,\ldots,n_{m_j}},H^{(j)}_{(n_1,\ldots,n_{m_j})})\}_{j,n_1,\ldots,n_{m_j}}$ obtained by applying the procedure above on each tower in the castle satisfies condition (ii) of almost finiteness. For each $j\in\{1,\ldots,J\}$ find sets $S_j'\subset S_j$ such that
\[X\setminus\bigsqcup_{j=1}^JS_jW_j\prec\bigsqcup_{j=1}^JS_j'W_j.\]
Observe that for each $j$ we have
\[\bigsqcup_{n_1,\ldots,n_{m_j}}(S_j'\cap H^{(j)}_{(n_1,\ldots,n_{m_j})})V^{(j)}_{n_1,\ldots,n_{m_j}}=S_j'W_j\]
and that the sets appearing in the union on the left-hand side are levels of the tower $(V^{(j)}_{n_1,\ldots,n_{m_j}}, H^{(j)}_{(n_1,\ldots,n_{m_j})})$. Observe also that $|(S_j'\cap H^{(j)}_{(n_1,\ldots,n_{m_j})})u|\leq|S_j'u|$ for each $u\in G^{(0)}$. It follows that
\begin{align*}
X\setminus\bigsqcup_{j=1}^J\bigsqcup_{n_1,\ldots,n_{m_j}}H^{(j)}_{(n_1,\ldots,n_{m_j})}V^{(j)}_{n_1,\ldots,n_{m_j}}&=X\setminus\bigsqcup_{j=1}^J S_jW_j\\
&\prec\bigsqcup_{j=1}^JS_j'W_j\\&=\bigsqcup_{j=1}^J\bigsqcup_{n_1,\ldots,n_{m_j}}(S_j'\cap H^{(j)}_{(n_1,\ldots,n_{m_j})})V^{(j)}_{n_1,\ldots,n_{m_j}}.
\end{align*}
Therefore, the choices $(H^{(j)}_{(n_1,\ldots,n_{m_j})})'=(S_j'\cap H^{(j)}_{(n_1,\ldots,n_{m_j})})$ witness property (ii) of almost finiteness for the new castle.
\end{proof}

We now see how our notion of almost finiteness links back to Matui's original definition. For convenience, we first present Suzuki's generalisation of Matui's original notion.
\begin{definition}[{\cite[Definition~3.2]{Suzuki}}]\label{fundamental}
Let $K$ be a compact Hausdorff \'etale groupoid. We say that a clopen subset $F$ of $K^{(0)}$ is a \textit{fundamental domain} of $K$ if there exist a natural number $n\in\N$, natural numbers $N_i\in\N$ for each $i\in\{1,\ldots,n\}$, and a pairwise disjoint collection of clopen subsets $\left\{F_j^{(i)}\mid i\in\{1,\ldots,n\}, j\in\{1,\ldots,N_i\}\right\}$ 
of $K^{(0)}$ such that
\begin{enumerate}[label=(\roman*)]
\item $F=\bigsqcup_{i=1}^n F_1^{(i)}$;
\item $K^{(0)}=\bigsqcup_{i=1}^n\bigsqcup_{j=1}^{N_i}F_j^{(i)}$;
\item for each $i\in\{1,\ldots,n\}$ and $j,k\in\{1,\ldots,N_i\}$ there is a unique compact open $K$-set $V_{j,k}^{(i)}$ satisfying $r(V_{j,k}^{(i)})=F_j^{(i)}$ and $s(V_{j,k}^{(i)})=F_k^{(i)}$; and
\item $K=\bigsqcup_{i=1}^n\bigsqcup_{1\leq j,k\leq N_i} V_{j,k}^{(i)}$. 
\end{enumerate}
\end{definition}
\begin{definition}[{\cite[Definition~3.4]{Suzuki}}]
Let $G$ be a locally compact Hausdorff \'etale groupoid. A (compact) subgroupoid $K$ of $G$ is called \textit{elementary} if $G^{(0)}\subset K$ and if $K$ admits a fundamental domain. 
\end{definition}
\begin{definition}[{\cite[Definition~3.6]{Suzuki}}]\label{gpoidAF}
Let $G$ be a locally compact Hausdorff \'etale groupoid. We say that $G$ is \textit{almost finite} if it satisfies
\begin{enumerate}[label=(\roman*)]
\item the union of all compact open $G$-sets covers $G$; and
\item for any compact subset $C\subset G$ and $\epsilon>0$, there exists a $(C,\epsilon)$-invariant elementary subgroupoid of $G$. 
\end{enumerate}
\end{definition}
\begin{thm}[{c.f. \cite[Lemma~5.2]{Suzuki}}]\label{Suzuki Analogue}
Let $G$ be a locally compact Hausdorff \'etale groupoid, $X$ a compact metric space, and $\alpha:G\curvearrowright X$ a continuous action. Let $C\subset G$ be a compact subset, and fix $\epsilon>0$. 

Suppose that $\alpha$ admits a $(C,\epsilon)$-invariant clopen tower decomposition. Then the transformation groupoid $G\ltimes X$ admits a $(C\times X,\epsilon)$-invariant elementary subgroupoid. The converse holds in the case that $G$ is ample.
\end{thm}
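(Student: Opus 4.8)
The plan is to build an explicit dictionary between a clopen tower decomposition of $\alpha$ and the fundamental-domain data of an elementary subgroupoid of $G\ltimes X$, and then to observe that the two invariance conditions correspond \emph{exactly}, fibre by fibre, so that the same $\epsilon$ carries over with no constant lost.

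\emph{Forward direction.} Given a $(C,\epsilon)$-invariant clopen tower decomposition $\{(W_i,S_i)\}_{i=1}^n$ with $S_i=\bigsqcup_{j=1}^{N_i}S_{i,j}$ and levels $F_j^{(i)}:=S_{i,j}W_i$ partitioning $X$, I would set
\[V_{j,k}^{(i)}:=\big(S_{i,j}S_{i,k}^{-1}\times S_{i,k}W_i\big)\cap(G\ast X),\qquad K:=\bigsqcup_{i=1}^n\bigsqcup_{1\le j,k\le N_i}V_{j,k}^{(i)}.\]
First I would check the building blocks: each $S_{i,j}S_{i,k}^{-1}$ is a compact open bisection of $G$ (compact open since multiplication in an \'etale groupoid is open, and a bisection because $s(S_{i,j})=s(S_{i,k})=r_X(W_i)$ with $r,s$ injective on the pieces), each level $F_j^{(i)}$ is clopen by Lemma~\ref{actiontopology}, and hence each $V_{j,k}^{(i)}$ is a compact open bisection of $G\ltimes X$. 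A short manipulation with the action axioms of Definition~\ref{gpoidaction} gives $(V_{j,k}^{(i)})^{-1}=V_{k,j}^{(i)}$ and $V_{j,k}^{(i)}V_{k,l}^{(i)}=V_{j,l}^{(i)}$, with all mismatched products empty, so $K$ is a compact open (hence compact Hausdorff \'etale) subgroupoid containing $(G\ltimes X)^{(0)}\cong X$ (the diagonal pieces $V_{k,k}^{(i)}$ being precisely the units over $F_k^{(i)}$). The family $\{F_j^{(i)}\}$ then witnesses that $F:=\bigsqcup_iF_1^{(i)}$ is a fundamental domain in the sense of Definition~\ref{fundamental}: (i) and (ii) are immediate, (iv) is the definition of $K$, and for (iii) disjointness of levels forces $V_{j,k}^{(i)}$ to be the unique compact open $K$-set from $F_k^{(i)}$ to $F_j^{(i)}$. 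One point I would isolate is that the level map $W_i\to F_j^{(i)}$, sending $w$ to $g\cdot w$ for the unique $g\in S_{i,j}$ with $s(g)=r_X(w)$, is a \emph{bijection}; this follows from injectivity of $r$ on $S_{i,j}$ together with axioms (i)--(ii), and notably requires no freeness assumption.

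\emph{Invariance.} This is the crux, and I would handle it by a direct fibre count. Fix a unit $w\in X$; it lies in a unique level, say $w=g_k\cdot v$ with $v\in W_i$ and $g_k\in S_{i,k}$, and set $u:=r_X(v)=s(g_k)$. Then $Kw=\{(g_jg_k^{-1},w):1\le j\le N_i\}$, and $(g_jg_k^{-1},w)\mapsto g_j$ is a bijection $Kw\to S_iu$. Tracing left multiplication by $(C\times X)\cap(G\ast X)$ through the product formula, one finds $(C\times X)(g_jg_k^{-1},w)\subset Kw$ if and only if $Cg_j\subset S_iu$, so that
\[\big|\{a\in Kw:(C\times X)a\subset Kw\}\big|=\big|\{g\in S_iu:Cg\subset S_iu\}\big|>(1-\epsilon)|S_iu|=(1-\epsilon)|Kw|,\]
the inequality being $(C,\epsilon)$-invariance of $S_i$ in the sense of condition~\ref{gpoidinvariant2}. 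The crucial feature is that this is an \emph{equality} of cardinalities, not an estimate through Lemma~\ref{equivgpoidapproxinvce}; the identical computation with $(C\times X)Kw\setminus Kw\leftrightarrow CS_iu\setminus S_iu$ gives the same exact correspondence for condition~\ref{gpoidinvariant1}. Hence $K$ is $(C\times X,\epsilon)$-invariant with precisely the same $\epsilon$, whichever formulation one adopts.

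\emph{Converse ($G$ ample).} Here I would run the dictionary backwards, and this is where I expect the real work. Given a $(C\times X,\epsilon)$-invariant elementary subgroupoid $K$ with data $\{F_j^{(i)}\}$, $\{V_{j,k}^{(i)}\}$, the task is to disassemble the $G\ltimes X$-bisections $V_{j,1}^{(i)}$ into honest bisections of $G$. Since $G$ is ample, its compact open (equivalently clopen) bisections form a basis, so I can cover the compact projection $\pi_G(V_{j,1}^{(i)})\subset G$ by finitely many compact open bisections $B_l$; intersecting $V_{j,1}^{(i)}$ with the $B_l\times X$ and disjointifying partitions the base $F_1^{(i)}$ into clopen pieces, the clopenness coming from the bisection structure and clopenness of the $B_l$, so that no decomposition of $X$ itself is required. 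After passing to a common refinement of these partitions over $j=1,\dots,N_i$, each refined base piece $W$ carries, for every $j$, a compact open bisection $S_{i,j}:=B\cap s^{-1}(r_X(W))$ with $s(S_{i,j})=r_X(W)$ (using that $r_X$ is open, Lemma~\ref{etalestrongcontinuity}) and $S_{i,j}W$ a clopen sub-level of $F_j^{(i)}$. Taking the $W$'s as bases and the $\bigsqcup_jS_{i,j}$ as shapes yields a clopen tower decomposition whose levels still partition $X$, and the same fibre bijection $Kw\leftrightarrow S_iu$ transports invariance back. The main obstacle is exactly this reassembly: arranging the $G$-bisections to have source precisely $r_X(W)$ and to share a common base within each tower, which is where ampleness of $G$ is indispensable, since without compact open bisections in $G$ the sets $V_{j,k}^{(i)}$ cannot be cut into pieces lying over genuine bisections of $G$.
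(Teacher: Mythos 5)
Your proposal is correct and follows essentially the same route as the paper: your $V_{j,k}^{(i)}=(S_{i,j}S_{i,k}^{-1}\times S_{i,k}W_i)\cap(G\ast X)$ coincides with the paper's $V^{(i)}_{j,\textnormal{base}}(V^{(i)}_{k,\textnormal{base}})^{-1}$, the invariance transfer is the same fibrewise bijection $Kw\leftrightarrow S_iu$, and the converse uses the same ampleness-based splitting of the $V^{(i)}_{j,1}$ over clopen bisections of $G$. The only difference is expository: you spell out the common refinement of the bases and the exact correspondence for both invariance formulations, points the paper treats more tersely.
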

\begin{proof}
($\Rightarrow$): Take a clopen tower decomposition $\{(W_i,S_i)\}_{i=1}^n$ of $\alpha$ as in the theorem, so that $S_i=\bigsqcup_{j=1}^{N_i} S_{i,j}$ for each $i\in\{1,\ldots,n\}$, where the range map of $G$ is injective on each $S_{i,j}$. Define an elementary subgroupoid
\[K=\bigsqcup\limits_{i=1}^n\bigsqcup\limits_{1\leq j,k\leq N_i} V_{j,k}^{(i)}\]
as follows. We will let $V^{(i)}_{j,k}$ be a collection of arrows from the $k$-th to the $j$-th level of the $i$-th tower $(W_i,S_i)$, which we will obtain by moving from the $k$-th level $S_{i,k}W_i$ down to the base along $S_{i,k}^{-1}$, and then from the base to the $j$-th level along $S_{i,j}$. More precisely, for each $j\in\{1,\ldots, N_i\}$, consider the collection 
\[V^{(i)}_{j,\textnormal{base}}=(S_{i,j}\times W_i)\cap(G\ltimes X)=\{(g,x)\mid g\in S_{i,j},\ x\in W_i,\ s(g)=r_X(x)\}.\]
 Notice that for each $x\in W_i$ there is exactly one associated element $(g,x)\in V_{j,\textnormal{base}}^{(i)}$. Define
\[V^{(i)}_{j,k}=V^{(i)}_{j,\textnormal{base}}(V^{(i)}_{k,\textnormal{base}})^{-1}.\]
This defines an elementary subgroupoid with $F_j^{(i)}=S_{i,j}\cdot W_i$.

Now, fix a compact subset $C\subset G$ and $\epsilon>0$ and suppose that the clopen tower decomposition $\{(W_1,S_1),\ldots,(W_n,S_n)\}$ is $(C,\epsilon)$-invariant.
Construct the associated elementary subgroupoid
\[ K=\bigsqcup\limits_{i=1}^n\bigsqcup\limits_{1\leq j,k\leq N_i} V_{j,k}^{(i)}.\]
Fix $x\in X$. By construction, there exists a unique $i\in\{1,\ldots,n\}$ and $j\in\{1,\ldots,N_i\}$ such that $x\in F_j^{(i)}$. Then the only elements $k\in K$ for which $s(k)=x$ lie in the sets $V^{(i)}_{l,j}$ for $l\in\{1,\ldots,N_i\}$. From each such set, there exists a unique $k_l\in K$ with $s(k_l)=x$, so $|Kx|=N_i$. 
On the other hand, we assumed that we can partition $S_i$ into subsets $S_i=\bigsqcup_{j=1}^{N_i} S_{i,j}$ such that each $S_{i,j}$ has the same source, and such that the source map is injective on each $S_{i,j}$. For any $u\in s(S_i)$, it follows that $|S_iu|=N_i$, which shows that $|Kx|=|S_iu|$.

For the choice of $x\in F_j^{(i)}=S_{i,j}W_i$ from above, write $x=hy$ for $h\in S_{i,j}$ and $y\in W_i$. We can compute
\begin{align*}
Kx&=\{(gh^{-1}, x)\mid g\in S_{i,l}, s(g)=s(h), l=1,\ldots,N_i\}\\
&=\{(gh^{-1},hy)\mid g\in S_{i,l}, s(g)=s(h), l=1,\ldots,N_i\},
\end{align*}
which gives
\[(C\times X)Kx=\{(c,z)(gh^{-1},hy)\mid g\in S_{i,l}, s(g)=s(h), l=1,\ldots,N_i, c\in C, z\in X\}.\]
These elements are composable when $s(c)=r(g)$ and $gy=z$, producing the product $(cgh^{-1}, hy)$. Thus, with $h$ and $y$ fixed as above, we see that
\begin{align*}
(C\times X)Kx\setminus Kx=\{(cgh^{-1},hy)\mid &g\in S_{i,l}, s(g)=s(h), l=1,\ldots,N_i,\\ &c\in C, s(c)=r(g), cg\notin S_{i,m}\textnormal{ for }m=1,\ldots,N_i\}
\\=\{(cgh^{-1},hy)\mid &g\in S_i, s(g)=s(h),c\in C, s(c)=r(g), cg\notin S_i\}.\end{align*}
On the other hand, for $u=r_X(y)\in S_i^{(0)}$, we have 
\begin{align*}
CS_iu\setminus S_iu&=\{cgu\mid g\in S_i, s(g)=u=r_X(y), c\in C, s(c)=r(g), cg\notin S_i\}\\
&=\{cgu\mid g\in S_i, s(g)=s(h), c\in C, s(c)=r(g), cg\notin S_i\},
\end{align*}
so we see that $|CS_iu\setminus S_iu|=|(C\times X)Kx\setminus Kx|$ via the bijection $(cgh^{-1},hy)\mapsto cgu$. Thus, combining this with our previous observation that $|Kx|=|S_iu|=N_i$ for each $u\in S_i^{(0)}$, we have obtained
\[\frac{|(C\times X)Kx\setminus Kx|}{|Kx|}=\frac{|CS_iu\setminus S_iu|}{|S_iu|}<\epsilon,\]
where the last inequality uses $(C,\epsilon)$-invariance of $S_i$. This shows that the elementary subgroupoid $K$ is $(C\times X,\epsilon)$-invariant.

($\Leftarrow$): Let $G$ be ample, and let $K=\bigsqcup_{i=1}^n\bigsqcup_{1\leq j,k\leq N_i} V_{j,k}^{(i)}$ be a $(C\times X,\epsilon)$-invariant elementary subgroupoid of $G\ltimes X$. Since each $V_{j,k}^{(i)}$ is a compact subset of $G\ltimes X$, the image $\pi_G(V_{j,k}^{(i)})$ under the projection to the $G$ coordinate is also compact and open. Since $G$ is ample, the topology on $G$ has a base of clopen bisections by \cite[Lemma~2.2]{CFST}. Therefore, using compactness, we can write $\pi_G(V_{j,k}^{(i)})$ as a union of finitely many pairwise disjoint clopen bisections, each of which will be compact in $G$ because they are closed in the compact subset $\pi_G(V_{j,k}^{(i)})$. We may use these bisections to split $V_{j,k}^{(i)}$ into smaller compact open $(G\ltimes X)$-sets, to assume without loss of generality that the range and source maps are injective on $\pi_G(V_{j,k}^{(i)})$. We note that because the original $V_{j,k}^{(i)}$ was a $(G\ltimes X)$-set, the newly defined subsets $V_{j,k}^{(i)}$ also yield redefined clopen sets $F_j^{(i)}\subset X$ such that the collection $\{F_j^{(i)}\}_{i,j}$ is pairwise disjoint.

We then form a clopen tower decomposition $\{(W_i,S_i)\}_{i=1,\ldots,n}$ as follows. Set $W_i=F_1^{(i)}$ and
\[S_i=\pi_G(s^{-1}(F_1^{(i)})\cap K)=\pi_G\left(\bigsqcup_{l=1}^{N_i}V^{(i)}_{l,1}\right),\]
with associated decomposition $S_i=\bigsqcup_{j=1}^{N_i}S_{i,j}$, where $S_{i,j}=\pi_G(V^{(i)}_{j,1})$. 
The levels of the $i$-th tower are then precisely the clopen sets $F^{(i)}_j$ for $j\in\{1,\ldots,N_i\}$. Using the properties of the elementary subgroupoid, we see that the base of each tower is clopen, each $S_{i,j}$ is compact and open, $s(S_{i,j})=r_X(W_i)$, and that the collection of tower levels is pairwise disjoint and partitions $X$. Due to the way we modified $V^{(i)}_{j,k}$, each $S_{i,j}$ is a subset of a bisection, and it follows that the source and range maps are injective on  $S_{i,j}$. This shows that $\{(W_i,S_i)\}_{i=1,\ldots,n}$ is a clopen tower decomposition, as we claimed.

Next, we show that this clopen tower decomposition is $(C,\epsilon)$-invariant. For each $x\in W_i$ we compute
\begin{align*}
|(C\times X)Kx\setminus Kx|&=|\{(cg,x)\mid c\in C, (g,x)\in Kx, (cg,x)\notin Kx\}|\\
&=|\{cg\mid c\in C, g\in\pi_G(Kx), cg\notin\pi_G(Kx)\}|\\
&=|\{cg\mid c\in C, g\in S_ir_X(x), cg\notin S_ir_X(x)\}|\\
&=|CS_ir_X(x)\setminus S_ir_X(x)|.
\end{align*}
We also have $|Kx|=|\pi_G((s^{-1}(W_i)\cap K)x)|=|S_ir_X(x)|$. Putting this all together gives for each $x\in s(S_i)=W_i$ that
\[\frac{|CS_ir_X(x)\setminus S_ir_X(x)|}{|S_ir_X(x)|}=\frac{|(C\times X)Kx\setminus Kx|}{|Kx|}<\epsilon.\]

Thus, we have produced a $(C,\epsilon)$-invariant clopen tower decomposition of $\alpha$.
\end{proof}
\begin{cor}
Let $G$ be a locally compact Hausdorff \'etale groupoid with compact unit space, $X$ a totally disconnected compact metric space, and $\alpha:G\curvearrowright X$ a continuous action.

If $\alpha$ is almost finite, with the additional requirement that the open castles in Definition~\ref{defgroupoidactionaf} can be chosen to be clopen tower decompositions of $X$, then the transformation groupoid $G\ltimes X$ is almost finite. 
Conversely, if $G$ is ample and $G\ltimes X$ is almost finite, 
 then $\alpha$ is almost finite and the castles appearing in the definition can be chosen to be clopen tower decompositions. 
\end{cor}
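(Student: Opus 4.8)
The plan is to read off both implications from \thmref{Suzuki Analogue} together with the definitions, using the fact that when $X$ is totally disconnected the transformation groupoid $G\ltimes X$ is automatically ample. Indeed, $G\ltimes X$ is \'etale by \lemref{actiontopology}(ii), and its unit space is identified with the zero-dimensional space $X$, so $G\ltimes X$ is ample; in particular its topology has a base of compact open bisections, so condition (i) of \defref{gpoidAF} holds for free. Thus in both directions the only real content concerns the invariance condition (ii) of \defref{gpoidAF} versus the castle/subtower data of \defref{defgroupoidactionaf}.

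For the forward direction, I would fix a compact $D\subset G\ltimes X$ and $\epsilon>0$, and set $C=\pi_G(D)$, which is compact in $G$. By hypothesis $\alpha$ admits a $(C,\epsilon)$-invariant clopen tower decomposition, so the $\Rightarrow$ direction of \thmref{Suzuki Analogue} produces a $(C\times X,\epsilon)$-invariant elementary subgroupoid $K$ of $G\ltimes X$. Since $D\subset C\times X$, for each unit $x$ we have $DKx\setminus Kx\subset (C\times X)Kx\setminus Kx$, so $|DKx\setminus Kx|\leq |(C\times X)Kx\setminus Kx|<\epsilon|Kx|$ and $K$ is also $(D,\epsilon)$-invariant. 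This verifies condition (ii) of \defref{gpoidAF}, so $G\ltimes X$ is almost finite.

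For the converse, assume $G$ is ample and $G\ltimes X$ is almost finite, and fix $m\in\N$, a compact $C\subset G$, and $\epsilon>0$. The set $(C\times X)\cap(G\ltimes X)$ is compact, since $G\ltimes X$ is closed in $G\times X$, so almost finiteness of $G\ltimes X$ supplies a $(C\times X,\epsilon)$-invariant elementary subgroupoid; feeding this into the $\Leftarrow$ direction of \thmref{Suzuki Analogue} (which is where ampleness of $G$ is used) yields a $(C,\epsilon)$-invariant clopen tower decomposition $\{(W_i,S_i)\}_{i=1}^n$ of $\alpha$. Because the levels of a tower decomposition partition $X$, the complement $X\setminus\bigsqcup_i S_iW_i$ is empty, and $\emptyset\prec B$ holds vacuously for every $B\subset X$; hence any choice of subtowers $S_i'\subset S_i$ witnesses the comparison, and we may take the $S_i'$ as small as we like---empty if necessary---so that $|S_i'u|<|S_iu|/m$ for all $u$. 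Thus these tower decompositions already satisfy condition (ii) of \defref{defgroupoidactionaf}, and satisfy condition (i) apart from the diameter requirement. Finally, \lemref{amplediameter} upgrades these arbitrarily invariant clopen tower decompositions to ones whose levels also have diameter less than $\epsilon$, while preserving both the clopen tower decomposition structure and condition (ii), giving almost finiteness of $\alpha$ with the castles chosen as clopen tower decompositions.

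The routine parts are the compactness of $C\times X$ and the monotonicity $DKx\setminus Kx\subset(C\times X)Kx\setminus Kx$. The step requiring most care is the converse's reduction of condition (ii) to the vacuous comparison for tower decompositions and the subsequent diameter refinement: one must check that \lemref{amplediameter} accepts tower decompositions (with their trivially satisfied comparison) as valid input and returns tower decompositions, and that the freeness hypothesis of \lemref{amplediameter} is indeed available in this setting. Granting these bookkeeping points, the corollary is an immediate consequence of \thmref{Suzuki Analogue} and \lemref{amplediameter}.
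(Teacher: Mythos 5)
Your proposal is correct and follows essentially the same route as the paper's proof: both directions go through Theorem~\ref{Suzuki Analogue}, condition (i) of Definition~\ref{gpoidAF} is dispatched by total disconnectedness of $X$, the comparison condition (ii) of Definition~\ref{defgroupoidactionaf} is observed to be vacuous for tower decompositions, and the diameter condition is obtained from Lemma~\ref{amplediameter}. The one caveat you flag---that Lemma~\ref{amplediameter} is stated for free actions while the corollary does not assume freeness---is present in the paper's own argument as well, so it does not distinguish your proof from theirs.
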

\begin{proof}
Fix $\epsilon>0$ to be used throughout the proof. 

First assume that $\alpha$ is almost finite, and that the open castles in the definition can be chosen to be clopen tower decompositions. Fix a compact subset $A\subset G\ltimes X$. We aim to produce an $(A,\epsilon)$-invariant elementary subgroupoid of $G\ltimes X$. Denote by $\pi_G:G\ltimes X\rightarrow G$ the projection map to the $G$-coordinate. Since $\pi_G$ is continuous, $\pi_G(A)$ is a compact subset of $G$. By our assumption on $\alpha$, we can find a $(\pi_G(A),\epsilon)$-invariant clopen tower decomposition. Applying Theorem~\ref{Suzuki Analogue} produces a $(\pi_G(A)\times X,\epsilon)$-invariant elementary subgroupoid of $G\ltimes X$. Since $A\subset \pi_G(A)\ltimes X$, this elementary subgroupoid is also $(A,\epsilon)$-invariant, so we see that $G\ltimes X$ satisfies condition (ii) of Definition~\ref{gpoidAF}. Since $(G\ltimes X)^{(0)}\cong X$ is totally disconnected, $G\ltimes X$ automatically satisfies condition (i) of Definition~\ref{gpoidAF} (see for example \cite{Exel}), and so is seen to be almost finite.

Now assume that $G$ is ample and $G\ltimes X$ is almost finite. Fix a compact subset $C\subset G$. We aim to construct a $(C,\epsilon)$-invariant open castle which satisfies conditions (i) and (ii) from the final part of Definition~\ref{defgroupoidactionaf}.
By almost finiteness of $G\ltimes X$, since $C\times X$ is a compact subset of $G\ltimes X$, there exists a $(C\times X,\epsilon)$ elementary subgroupoid of $G\ltimes X$. Since $G$ is ample, we can apply Theorem~\ref{Suzuki Analogue} to obtain a $(C,\epsilon)$-invariant clopen tower decomposition of $\alpha$. Thus, we have obtained a $(C,\epsilon)$-invariant open castle which covers $X$, so that condition (ii) from Definition~\ref{defgroupoidactionaf} is automatically satisfied. It only remains to show that we can arrange for the levels of this castle to have diameter smaller than $\epsilon$, which follows from Lemma~\ref{amplediameter} since $X$ is totally disconnected.
\end{proof}

\section{Ornstein-Weiss quasitiling machinery for groupoids}\label{sec:groupoidOW}
Following \cite{Kerr}, we require a generalisation of the Ornstein-Weiss quasitiling theorem \cite[page~24,~Theorem~6]{OW} to the groupoid setting. In this section we prove a version of the Ornstein-Weiss theorem that mimics the alternative formulation appearing as \cite[Theorem~4.36]{Kerr-Li}. 
 
We will need the following concepts:
\begin{definition}[{\cite[Definition~4.29]{Kerr-Li}}]\label{finitemeascovering}
Let $(X,\mu)$ be a finite measure space. Let $\lambda,\epsilon\geq0$. We say that a collection $\{A_i\}_{i\in I}$ of measurable subsets of $X$
\begin{enumerate}[label=(\roman*)]
\item is a \textit{$\lambda$-even} covering of $X$ if there exists a positive integer $M$ such that $\sum_{i\in I}\mathbf{1}_{A_i}\leq M$ and $\sum_{i\in I}\mu(A_i)\geq\lambda M\mu(X)$, in which case $M$ is called a \textit{multiplicity} of the $\lambda$-even covering.
\item \textit{$\lambda$-covers} $X$ if $\mu(\bigcup_{i\in I}A_i)\geq\lambda\mu(X)$.
\item is \textit{$\epsilon$-disjoint} if, for each $i\in I$, there exists a subset $\widehat{A_i}\subset A_i$ such that $\mu(\widehat{A_i})\geq(1-\epsilon)\mu(A_i)$, and such that the collection $\{\widehat{A_i}\}_{i\in I}$ is pairwise disjoint.
\end{enumerate}
\end{definition}
We extend the above definition to apply to collections of compact subsets of \'etale groupoids by asking for the appropriate concept to hold at each (finite) source bundle within each subset. One can also write down a similar definition for locally compact groupoids admitting a Haar system by using the Haar measure at each unit. In the \'etale case, the Haar measure is just the counting measure, which we use in the definition below. 

\begin{definition}\label{groupoidcovering}
Let $G$ be an \'etale groupoid and $\lambda\geq0$. Let $A\subset G$ be compact. We say that a collection $\{A_i\}_{i\in I}$ of compact subsets of $A$
\begin{enumerate}[label=(\roman*)]
\item is a \textit{$\lambda$-even covering} of $A$ with multiplicity $M$ if, for every $u\in s(A)$, the collection $\{A_iu\}_{i\in I}$ is a $\lambda$-even covering of $Au$ with multiplicity $M$.
\item \textit{$\lambda$-covers} $A$ if $\{A_iu\}_{i\in I}$ $\lambda$-covers $Au$ for each $u\in s(A)$. 
\item is \textit{$\epsilon$-disjoint} if $\{A_iu\}_{i\in I}$ is $\epsilon$-disjoint for each $u\in s(A)$.
\end{enumerate}
\end{definition}
The following pair of lemmas will be applied repeatedly in the proof of our quasitiling result. 
\begin{lemma}[c.f. {\cite[Lemma~4.31]{Kerr-Li}}] \label{evendisjoint}
Let $A$ be a compact subset of any locally compact Hausdorff \'etale groupoid (whose unit space is not necessarily compact). Let $0\leq\epsilon\leq\frac{1}{2}$ and $0<\lambda\leq1$, and let $\{A_i\}_{i\in I}$ be a $\lambda$-even covering of $A$. Then, for each $u\in s(A)$, there is an $\epsilon$-disjoint subcollection $\{A_ju\}_{j\in J_u}$ of $\{A_iu\}_{i\in I}$, indexed by $J_u\subset I$, which $\epsilon\lambda$-covers $Au$. 
\end{lemma}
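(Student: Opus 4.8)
The plan is to reduce the statement entirely to its classical counterpart \cite[Lemma~4.31]{Kerr-Li} by working one source fibre at a time. The key structural observation is that both Definition~\ref{groupoidcovering} and the conclusion we are after are formulated \emph{fibrewise}: the hypothesis that $\{A_i\}_{i\in I}$ is a $\lambda$-even covering of $A$ means precisely that $\{A_iu\}_{i\in I}$ is a $\lambda$-even covering of $Au$ for every $u\in s(A)$, and the desired conclusion only asks for an $\epsilon$-disjoint, $\epsilon\lambda$-covering subcollection separately at each $u$, with the index set $J_u$ permitted to depend on $u$. In particular, no compatibility of the subcollections across different fibres is required, which is exactly what makes the reduction clean.

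First I would fix $u\in s(A)$ and observe that the fibre $Au=A\cap G_u$ is a \emph{finite} set: since $G$ is \'etale and $A$ is compact, there is $N\in\N$ with $|Au|<N$ for all $u\in G^{(0)}$. Equipping the finite set $Au$ with counting measure $\mu$ turns $(Au,\mu)$ into a finite measure space in the sense of Definition~\ref{finitemeascovering}, and each $A_iu\subset Au$ is an (arbitrary) subset. Unwinding Definition~\ref{groupoidcovering}(i), the hypothesis says precisely that $\{A_iu\}_{i\in I}$ is a $\lambda$-even covering of the finite measure space $Au$, with the same multiplicity $M$. I would then simply apply \cite[Lemma~4.31]{Kerr-Li} to $(Au,\mu)$ and the covering $\{A_iu\}_{i\in I}$, producing a subset $J_u\subset I$ for which $\{A_ju\}_{j\in J_u}$ is $\epsilon$-disjoint and $\epsilon\lambda$-covers $Au$. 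For counting measure these are literally the fibrewise notions appearing in Definition~\ref{groupoidcovering}, so this is exactly the required conclusion.

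I do not anticipate a serious obstacle; the only content is the bookkeeping that the groupoid definitions specialise correctly to the counting-measure fibre and that the target assertion is genuinely per-fibre. If a self-contained argument were preferred over a citation, I would instead run the standard maximality proof inside each fibre. Choose a subcollection $\{A_ju\}_{j\in J_u}$ of $\{A_iu\}_{i\in I}$ that is $\epsilon$-disjoint, with disjoint witnesses $\widehat{A_ju}\subset A_ju$ satisfying $\mu(\widehat{A_ju})\geq(1-\epsilon)\mu(A_ju)$, and maximal with respect to this property; set $W=\bigcup_{j\in J_u}\widehat{A_ju}$. Maximality (together with $1-\epsilon\geq\tfrac12\geq\epsilon$) forces $\mu(A_iu\cap W)\geq\epsilon\,\mu(A_iu)$ for every $i\in I$, and then the multiplicity bound $\sum_{i}\mathbf 1_{A_iu}\leq M$ combined with the even-covering inequality $\sum_{i}\mu(A_iu)\geq\lambda M\mu(Au)$ yields
\[
M\mu(W)\ \geq\ \sum_{i\in I}\mu(A_iu\cap W)\ \geq\ \epsilon\sum_{i\in I}\mu(A_iu)\ \geq\ \epsilon\lambda M\mu(Au),
\]
so that $\mu(W)\geq\epsilon\lambda\mu(Au)$. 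Since $W\subset\bigcup_{j\in J_u}A_ju$, the subcollection $\{A_ju\}_{j\in J_u}$ $\epsilon\lambda$-covers $Au$, as required.
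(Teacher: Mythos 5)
Your proposal is correct and is essentially the paper's own proof: the paper likewise observes that the hypothesis and conclusion are fibrewise, that each $Au$ with counting measure is a finite measure space on which $\{A_iu\}_{i\in I}$ is a $\lambda$-even covering, and then applies \cite[Lemma~4.31]{Kerr-Li} at each $u\in s(A)$. Your optional self-contained maximality argument is a correct unwinding of that cited lemma but is not needed.
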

\begin{proof}
By definition, for each $u\in s(A)$, the collection $\{A_iu\}_{i\in I}$ is a $\lambda$-even covering of the finite measure space $Ax$ (equipped with the counting measure) in the sense of Definition~\ref{finitemeascovering}. Thus, for each $u\in s(A)$, applying \cite[Lemma~4.31]{Kerr-Li} yields the $\epsilon$-disjoint subcollection of $\{A_iu\}_{i\in I}$ that $\epsilon\lambda$-covers $Au$ that we seek.
\end{proof}
Throughout this section, we will use as standard the notion of approximate invariance given by property (ii) in Lemma~\ref{equivgpoidapproxinvce}.
\begin{definition}\label{Iinvariant}
Let $G$ be an \'etale groupoid. Let $C,A\subset G$ be nonempty and compact, and let $\epsilon>0$. We denote
\[I_C(A)\coloneqq\{a\in A\mid Ca\subset A\},\]
and we say that $A$ is \textit{$(C,\epsilon)$-invariant} if, for each $u\in s(A)$, we have 
\[
|I_C(Au)|\geq(1-\epsilon)|Au|.
\]
\end{definition}
\begin{lemma}[c.f. {\cite[Lemma~4.33]{Kerr-Li}}]\label{eventranslates}
Let $G$ be a locally compact Hausdorff \'etale groupoid with compact unit space. Let $C\subset G$ be a nonempty, compact subset containing $G^{(0)}$. Choose $\epsilon>0$, and let
\[\delta\geq\max\left(0, 1-\frac{\inf_{w\in G^{(0)}}|Cw|}{\sup_{w\in G^{(0)}}|wC|}(1-\epsilon)\right).\]
Suppose that $A\subset G$ is a nonempty, compact subset which is $(C,\epsilon)$-invariant. 
Then the collection $\{Ca\mid a\in I_C(A)\}$ is a $(1-\delta)$-even covering of $A$ with multiplicity $\sup_{w\in G^{(0)}}|wC|$. 
\end{lemma}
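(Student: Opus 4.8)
The plan is to unwind the fibred definition of an even covering (\defref{groupoidcovering}) and reduce everything to a counting argument inside a single source fibre. Fix $u\in s(A)$. Every element of $Ca$ has source $s(a)$, so among the sets $\{Ca\mid a\in I_C(A)\}$ only those with $s(a)=u$ meet the fibre over $u$, and for such $a$ one has $Ca\subset Au$. Since $I_C(A)\cap G_u=I_C(Au)$ (the condition $Ca\subset A$ is equivalent to $Ca\subset Au$ once $s(a)=u$, because then every element of $Ca$ already has source $u$), the induced fibre collection is exactly $\{Ca\mid a\in I_C(Au)\}$, a family of subsets of $Au$. It therefore suffices, by \defref{finitemeascovering}(i), to verify for each such $u$ the two inequalities defining a $(1-\delta)$-even covering of the finite set $Au$ (with counting measure) using the single multiplicity $M\coloneqq\sup_{w\in G^{(0)}}|wC|$.

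First I would establish the multiplicity bound $\sum_{a\in I_C(Au)}\mathbf 1_{Ca}\leq M$, i.e.\ that each $g\in Au$ lies in at most $M$ of the sets $Ca$. If $g\in Ca$ then $g=ca$ with $c\in C$ and $r(c)=r(g)$, whence $a=c^{-1}g$ is determined by $c$; as $c$ ranges over $r(g)C=\{c\in C\mid r(c)=r(g)\}$ this produces at most $|r(g)C|\leq\sup_{w\in G^{(0)}}|wC|=M$ distinct values of $a$. All these cardinalities are finite because $C$ is compact and $G$ is \'etale.

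Next I would bound the total mass $\sum_{a\in I_C(Au)}|Ca|$ from below. Right cancellation in the groupoid makes $c\mapsto ca$ a bijection from $Cr(a)=\{c\in C\mid s(c)=r(a)\}$ onto $Ca$, so $|Ca|=|Cr(a)|\geq\inf_{w\in G^{(0)}}|Cw|$; here the hypothesis $G^{(0)}\subset C$ guarantees each fibre $Cw$ is nonempty, so this infimum is a positive integer. Summing and invoking $(C,\epsilon)$-invariance (\defref{Iinvariant}) gives
\[
\sum_{a\in I_C(Au)}|Ca|\geq|I_C(Au)|\inf_{w\in G^{(0)}}|Cw|\geq(1-\epsilon)|Au|\inf_{w\in G^{(0)}}|Cw|.
\]
The defining choice of $\delta$ is precisely the statement that $(1-\epsilon)\inf_{w}|Cw|\geq(1-\delta)\sup_{w}|wC|=(1-\delta)M$, so the right-hand side is at least $(1-\delta)M|Au|$, which is the mass condition $\sum_i\mu(A_i)\geq\lambda M\mu(X)$ with $\lambda=1-\delta$ and $\mu$ counting measure on $Au$.

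I expect no serious obstacle: the argument is a fibrewise pigeonhole/counting computation once the two asymmetric fibre notations ($Cw$ versus $wC$, and source versus range fibres) are handled consistently. The one point requiring care is that the \emph{same} integer $M=\sup_w|wC|$ must simultaneously serve as the pointwise multiplicity bound and as the factor in the mass inequality; this is exactly why the threshold for $\delta$ features the ratio $\inf_w|Cw|/\sup_w|wC|$ rather than a single cardinality. The assumption $G^{(0)}\subset C$ (ensuring $Cw,wC\neq\emptyset$, hence $\inf_w|Cw|\geq1$ and $M\geq1$) together with compactness of $G^{(0)}$ (so that these extrema are attained and finite) are what keep the bound meaningful.
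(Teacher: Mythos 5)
Your proposal is correct and follows essentially the same argument as the paper: a fibrewise counting argument in $Au$, bounding the total mass from below by $|I_C(Au)|\inf_{w}|Cw|\geq(1-\epsilon)|Au|\inf_w|Cw|\geq(1-\delta)\sup_w|wC|\,|Au|$ via the choice of $\delta$, and bounding the multiplicity by noting that $g\in Ca$ forces $a=c^{-1}g$ for some $c\in r(g)C$, so at most $\sup_w|wC|$ of the translates can contain any given $g$. The only cosmetic difference is that you make explicit the bijection $c\mapsto ca$ from $Cr(a)$ onto $Ca$ and the role of $G^{(0)}\subset C$, which the paper leaves implicit.
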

\begin{proof}
Since $A$ is $(C,\epsilon)$-invariant, for each $u\in s(A)$ we have $|I_C(Au)|\geq(1-\epsilon)|Au|$.
Therefore, we see that
\begin{align*}
\sum_{a\in I_C(A)}|Cau|&\geq |I_C(Au)|\inf_{a\in I_C(Au)}|Cr(a)|\\
&\geq(1-\epsilon)|Au|\inf_{w\in G^{(0)}}|Cw|\\
&=(1-\epsilon)|Au|\sup_{w\in G^{(0)}}|wC|\frac{\inf_{w\in G^{(0)}}|Cw|}{\sup_{w\in G^{(0)}}|wC|}\\
&\geq(1-\delta)\sup_{w\in G^{(0)}}|wC||Au|,
\end{align*}
so that the second condition in the definition of a $(1-\delta)$-even covering with multiplicity $\sup_{w\in G^{(0)}}|wC|$ is satisfied.

Next, to have $g\in Ca_1u_1 \cap Ca_2u_2$, we require that $u_1=s(g)=u_2=u$. So, suppose that $g\in Ca_1u\cap Ca_2u$ for distinct $a_1, a_2 \in I_C(Au)$. Then there exist $c_1, c_2\in C$ such that $c_1a_1=g=c_2a_2$, which requires that $r(c_1)=r(g)=r(c_2)$. Since $a_1\neq a_2$, we must also have that $c_1\neq c_2$. Extending this argument, we see that for each fixed $u\in s(A)$, the intersection of $n\in\N$ sets from the collection $\{Cau\mid a\in I_C(A)\}$ at a common element $g$ requires the existence of $n$ distinct elements of $r(g)C$. This shows that, for each $u\in s(A)$ and $g\in Au$, 
\[\sum\limits_{a\in I_C(Au)}\mathbf{1}_{Cau}(g)\leq |r(g)C|\leq\sup_{w\in G^{(0)}}|wC|.\qedhere\]
\end{proof}
In the group case, one may alternatively characterise approximate invariance by using a ``small boundary'' condition. We introduce an analogue of the notion for groupoids.
\begin{definition}[c.f. {\cite[Definition~4.34]{Kerr-Li}}]
Let $G$ be a locally compact Hausdorff \'etale groupoid, and let $C,A$ be compact subsets of $G$. The \textit{$C$-boundary} of $A$ is
\[\partial_C(A)\coloneqq\{g\in G\mid Cg\cap A\neq\emptyset\textnormal{ and }Cg\cap A^c\neq\emptyset\}\]
\end{definition}
Notice that $\partial_C(A)$ may be infinite. Therefore, to obtain the aforementioned ``small boundary'' condition, we need to consider the $C$-boundary of the source bundle at each unit, and ask that for every $u\in s(A)$ we have
\[|\partial_C(Au)|\leq\epsilon|Au|.\]
In particular, notice that $\partial_C(Au)\subset C^{-1}Au$ and so it is always finite since $C^{-1}A$ is compact. 

We now introduce our analogue of quasitilings. When reading the following definition, notice that cutting down to the finite source bundles $Au$ is still necessary, but is built into our terminology via Definition~\ref{groupoidcovering}.
\begin{definition}[c.f. {\cite[Definition~4.35]{Kerr-Li}}]
Let $G$ be a locally compact Hausdorff \'etale groupoid with compact unit space. Let $A$ be a compact subset of $G$, and fix $\epsilon>0$. A finite collection $\{T_1,\ldots,T_n\}$ of compact subsets of $G$ is said to \textit{$\epsilon$-quasitile} $A$ if there exist $C_1,\ldots,C_n\subset G$ such that $\bigcup_{i=1}^n T_iC_i\subset A$ and so that the collection of right translates $\{T_ic\mid i\in\{1,\ldots,n\}, c\in C_i\}$ is $\epsilon$-disjoint and $(1-\epsilon)$-covers $A$. 

The subsets $T_1,\ldots,T_n$ are referred to as the \textit{tiles}, and $C_1,\ldots,C_n$ as the \textit{centres} of the quasitiling.
\end{definition}
The following result is our analogue of the Ornstein-Weiss quasitiling theorem. In the case that $G$ is an amenable group, tiles $T_1,\ldots,T_n$ satisfying the conditions of the theorem automatically exist. We conjecture that the same is true when $G$ is a (topologically) amenable groupoid. We show that our main examples admit such tiles in Lemma~\ref{tilingTset}. 
\begin{thm}[c.f. {\cite[Theorem~4.36]{Kerr-Li}}]\label{groupoidOW}
Let $G$ be a locally compact Hausdorff \'etale groupoid with compact unit space.
Let $0<\epsilon<1/2$, and choose $m\in\N$ such that $m\geq2$. Let $n$ be a positive integer such that $(1-\epsilon/m)^n<\epsilon$. 
Suppose that $T_1\subset T_2\subset\cdots\subset T_n$ are compact subsets of $G$ which contain $G^{(0)}$ and are such that, for each $i\in\{1,\ldots,n\}$,
\begin{enumerate}[label=(\roman*)]
\item $1/m<\frac{\inf_{w\in G^{(0)}}|T_iw|}{\sup_{w\in G^{(0)}}|wT_i|}\leq1$;
\item $\frac{\inf_{w\in G^{(0)}}|T_iw|}{\sup_{w\in G^{(0)}}|wT_i|}(1-\epsilon)\geq1/m$; and
\item $|\partial_{T_{i-1}}(T_iw)|\leq(\epsilon^2/8)|T_iw|$ for each $w\in G^{(0)}$.
\end{enumerate}
Then every $(T_n,\epsilon^2/4)$-invariant compact subset of $G$ is $\epsilon$-quasitiled by $\{T_1,\ldots,T_n\}$. 
\end{thm}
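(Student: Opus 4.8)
The plan is to follow the classical Ornstein-Weiss greedy algorithm, adapted fibrewise to the source bundles $Au$ of the groupoid. The idea is to build the quasitiling inductively, starting with the largest tile $T_n$ and working down to $T_1$, at each stage packing in as many disjoint translates as possible via the even-covering and $\epsilon$-disjointness lemmas, and tracking how much of $A$ remains uncovered. Concretely, I would first invoke \lemref{eventranslates}: since $A$ is $(T_n,\epsilon^2/4)$-invariant and each $T_i$ contains $G^{(0)}$, the collection $\{T_n a\mid a\in I_{T_n}(A)\}$ is a $(1-\delta)$-even covering of $A$ for a suitable small $\delta$ (the hypotheses (i) and (ii) on the ratios $\inf|T_iw|/\sup|wT_i|$ are precisely what make $\delta$ controllable and bounded away from $1$). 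Then \lemref{evendisjoint} extracts, at each unit $u\in s(A)$, an $\epsilon$-disjoint subcollection $\{T_n a\mid a\in C_{n,u}\}$ that $\epsilon(1-\delta)$-covers $A u$. Setting $C_n=\bigcup_u C_{n,u}$ gives the centres for the top tile.

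The inductive step is where the real work lies. After placing the translates of $T_n,\ldots,T_{i+1}$, let $B_i\subset A$ denote the remaining uncovered part (fibrewise, $B_iu=Au\setminus(\text{already-covered part})$). The goal is to show $B_i$ is still sufficiently invariant relative to $T_i$ so that \lemref{eventranslates} and \lemref{evendisjoint} apply again, yielding centres $C_i$ so that the $T_i$-translates $\epsilon(1-\delta)$-cover $B_iu$ while staying $\epsilon$-disjoint from everything placed so far. Here the small-boundary hypothesis (iii), $|\partial_{T_{i-1}}(T_iw)|\leq(\epsilon^2/8)|T_iw|$, is essential: it guarantees that restricting attention to those $a$ with $T_ia$ landing in the uncovered region $B_i$ does not destroy $(T_i,\epsilon^2/4)$-invariance, since the boundary effects that would spoil invariance are quantitatively small. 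The nesting $T_1\subset\cdots\subset T_n$ and the fact that all $T_i\supset G^{(0)}$ ensure the translates placed at earlier (larger) stages interact controllably with the boundary of the current tile. One must verify that after each stage the uncovered fibre $B_iu$ shrinks by a definite factor: since each stage removes at least an $\epsilon(1-\delta)\geq\epsilon/m$ proportion, after processing all $n$ tiles the uncovered fraction is at most $(1-\epsilon/m)^n<\epsilon$, giving the $(1-\epsilon)$-covering conclusion. The choice of $n$ in the statement is calibrated exactly for this geometric decay.

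Throughout, I would keep all estimates uniform over $u\in s(A)$, using the compactness of $G$ to bound $\sup_{w}|wC|$ and $\inf_w|Cw|$ (finite by the standing fact that compact subsets of \'etale groupoids have uniformly bounded source bundles), and then define the global centres $C_i$ as the fibred union of the fibrewise centres $C_{i,u}$. The containments $\bigcup_i T_iC_i\subset A$ follow because we only ever select $a\in I_{T_i}(B_i)\subset I_{T_i}(A)$, for which $T_ia\subset A$ by definition of $I_{T_i}$. The $\epsilon$-disjointness of the full collection $\{T_ic\mid c\in C_i\}$ is assembled from the fibrewise $\epsilon$-disjointness produced by \lemref{evendisjoint} at each stage, together with the disjointness of the uncovered regions $B_i$ from the already-tiled portion.

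I expect the main obstacle to be the fibrewise invariance bookkeeping in the inductive step: showing that the uncovered set $B_i$ remains $(T_i,\epsilon^2/4)$-invariant (or sufficiently close) at each unit after removing the previously placed translates. In the group case this is a clean boundary estimate, but in the groupoid setting one must control how the source bundles $B_iu$ inherit invariance from $Au$, accounting for the non-uniformity of $|T_iu|$ across units and ensuring that the bounds in hypotheses (i)--(iii) combine to keep $\delta$ small at every stage. The \emph{asymmetry} between the range count $|wC|$ and source count $|Cw|$, absent in the group case, is what forces the somewhat delicate ratio hypotheses, and verifying that these propagate correctly through the induction will be the technically demanding part.
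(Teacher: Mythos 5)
Your plan follows the same route as the paper's proof: a downward recursion over $T_n,\ldots,T_1$, at each stage applying Lemma~\ref{eventranslates} to the current remainder and then Lemma~\ref{evendisjoint} fibrewise to extract $\epsilon$-disjoint translates, with conditions (i)--(ii) guaranteeing that the even-covering constant is at least $1/m$ and condition (iii) controlling the $T_k$-boundaries of the previously placed translates (via $T_k\subset T_{i-1}$ and the translation injection $a\mapsto ac^{-1}$). The base step and the disjointness bookkeeping are as in the paper.

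There is, however, a concrete gap at exactly the point you flag as the obstacle: it is \emph{not} true that the uncovered remainder stays approximately invariant at every unit, and the inductive step as you state it would fail there. The invariance estimate for the remainder $A_k=A\setminus\bigcup_{i>k}T_iC_i$ has error terms proportional to $|Au|$, not $|A_ku|$: the $(T_n,\epsilon^2/4)$-invariance of $A$ costs $\tfrac{\epsilon^2}{4}|Au|$, and condition (iii) together with $\tfrac12$-disjointness bounds the accumulated boundaries $\bigl|\bigcup_{i>k}\bigcup_{c\in C_iu}\partial_{T_k}(T_icu)\bigr|$ by $\tfrac{\epsilon^2}{4}\bigl|\bigcup_{i>k}T_iC_iu\bigr|\leq\tfrac{\epsilon^2}{4}|Au|$. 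These losses only convert into a useful relative bound on $|I_{T_k}(A_ku)|/|A_ku|$ when $|A_ku|$ is at least a fixed fraction of $|Au|$; at units where the remainder has become tiny, $I_{T_k}(A_ku)$ may even be empty and Lemma~\ref{eventranslates} cannot be applied. The paper resolves this with an early-stopping device: the $k$-th stage is restricted to $Z_k=\{u\in s(A)\mid |A_k'u|\geq\epsilon|Au|\}$, no new centres are chosen at units outside $Z_k$ (where $Au$ is already $(1-\epsilon)$-covered), and on $Z_k$ the estimate yields only $(T_k,\epsilon/2)$-invariance of the remainder --- weaker than $\epsilon^2/4$, but sufficient for Lemma~\ref{eventranslates} in view of condition (ii). Correspondingly, the covering fraction propagated through the induction is not the naive $1-(1-\epsilon/m)^{n-k+1}$ of your geometric-decay argument but $\lambda_k=\min\bigl(1-\epsilon,\ 1-(1-\epsilon/m)^{n-k+1}\bigr)$: the $\min$ with $1-\epsilon$ is precisely what absorbs the units where the recursion stops early, and is why the conclusion is a $(1-\epsilon)$-cover. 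Supplying this device (and the attendant bound $|Au|\leq\epsilon^{-1}|A_ku|$ on $Z_k$) is the missing idea needed to make your inductive step go through.
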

We briefly describe the ideas behind the proof of Theorem~\ref{groupoidOW}. The general philosophy will be to start with an $(T_n,\epsilon^2/4)$-invariant compact subset $A$ of $G$, apply Lemma~\ref{eventranslates} to construct an even covering of $A$ whose tolerance is limited by $T_n$ and $\epsilon$, and then use Lemma~\ref{evendisjoint} to turn our even covering into an $\epsilon$-disjoint collection which still covers some fixed portion of each source bundle in $A$. The assumptions of Theorem~\ref{groupoidOW} are designed to ensure that we can iterate this procedure $n\in\N$ times. In particular, at each stage, the relative complement in $A$ of everything covered up to that point will be sufficiently invariant (under some $T_k$, where the index $k$ shrinks as the procedure continues) to allow another application of this pair of lemmas. In addition, the assumptions ensure that when this procedure terminates, the union of all the $\epsilon$-disjoint subcollections that have been constructed will $(1-\epsilon)$-cover the invariant compact set $A$ that we started with. 
\begin{proof}[Proof of Theorem~\ref{groupoidOW}]
Let $A$ be a nonempty $(T_n,\epsilon^2/4)$-invariant compact subset of $G$. We will recursively construct $C_n,\ldots,C_1\subset G$ (in that order) such that, for each $k\in\{1,\ldots,n\}$, $\bigcup_{i=k}^n T_iC_i\subset A$, and so that the collection of translates $\{T_ic\mid i\in\{k,\ldots,n\}, c\in C_i\}$ is $\epsilon$-disjoint and $\lambda_k$-covers $A$, where
\[\lambda_k=\min(1-\epsilon,\ 1-(1-\epsilon/m)^{n-k+1}).\]
By our assumption on $n$ it will then follow that $\{T_1,\ldots, T_n\}$ $\epsilon$-quasitiles $A$. Indeed, we have that $(1-\epsilon/m)^n<\epsilon$, so $\lambda_1=1-\epsilon$, as required.

For the base step (when $k=n$), we apply Lemma~\ref{eventranslates} to the $(T_n,\epsilon^2/4)$-invariant compact subset $A$ to notice that the collection $\{T_na\mid a\in I_{T_n}(A) \}$, where $I_{T_n}(A)=\{a\in A\mid T_na\subset A\}$ is as in Definition~\ref{Iinvariant},
 is a $\beta$-even covering of $A$, where 
\[\beta=\frac{\inf_{w\in G^{(0)}}|T_nw|}{\sup_{w\in G^{(0)}}|wT_n|}(1-\epsilon^2/4).\]
Observe that $\beta\geq\frac{\inf|T_nw|}{\sup|wT_n|}(1-\epsilon)$, and so, by condition (ii), $\beta\geq1/m$. This shows that the collection $\{T_na\mid a\in I_{T_n}(A)\}$ is a $(1/m)$-even covering of $A$. 
Then we can apply Lemma~\ref{evendisjoint} to find, for each $u\in G^{(0)}$, a subset $C_{n,u}\subset I_{T_n}(Au)$ such that the subcollection $\{T_nc\mid c\in C_{n,u}\}$ is $\epsilon$-disjoint and $(\epsilon/m)$-covers $Au$. This allows us to construct $C_n=\bigsqcup_{u\in G^{(0)}}C_{n,u}$, which satisfies the properties we seek.

Suppose that, for some $k\in\{1,\ldots,n-1\}$, we have constructed $C_n,C_{n-1},\ldots,C_{k+1}\subset G$ with the properties we desire. Set $A_k'=A\setminus\bigcup_{i=k+1}^n T_iC_i$. If $|A_k'u|<\epsilon|Au|$ for every $u\in s(A)$, then we can finish our construction by taking each of $C_k,\ldots,C_1$ to be the empty set, so suppose that the set $Z_k\coloneqq\{u\in s(A)\mid |A_k'u|\geq\epsilon|Au|\}$ is nonempty. Then, for each $u\in G^{(0)}\setminus Z_k$, we will take $C_ku=\emptyset$, and we define $A_k=\bigcup_{u\in Z_k}A_k'u$. We now wish to show that $A_k$ is $(T_k,\frac{\epsilon}{2})$-invariant. 

First, observe that, for each $u\in G^{(0)}$, $i\in\{k+1,\ldots,n\}$, and $c\in C_iu$, we have $|\partial_{T_k}(T_icu)|\leq|\partial_{T_{i-1}}(T_icu)|$, since $T_{k}\subset T_{i-1}$. Notice that the map $a\mapsto ac^{-1}$ is an injection which maps $\partial_{T_{i-1}}(T_ics(c))$ into $\partial_{T_{i-1}}(T_ir(c))$. Using this along with the boundary condition (iii) on the $T_i$, we see that
\[|\partial_{T_k}(T_ics(c))|\leq|\partial_{T_{i-1}}(T_ics(c))|\leq|\partial_{T_{i-1}}(T_ir(c))|\leq(\epsilon^2/8)|T_ir(c)|.\]
Since the collection $\{T_ics(c)\mid i\in\{k+1,\ldots,n\},\ c\in C_i\}$ is $\frac{1}{2}$-disjoint (since $\epsilon<\frac{1}{2}$), and since $|\bigcup_{i=k+1}^n T_iC_iu|\leq|Au|\leq\epsilon^{-1}|A_ku|$ for every $u\in Z_k$, we obtain, for each $u\in Z_k$, that
\begin{align}\label{unionboundary}
\left|\bigcup_{i=k+1}^n\bigcup_{c\in C_iu}\partial_{T_k}(T_icu)\right|&\leq\frac{\epsilon^2}{8}\sum_{i=k+1}^n\sum_{c\in C_iu}|T_ir(c)|\nonumber \\ 
&=\frac{\epsilon^2}{8}\sum_{i=k+1}^n\sum_{c\in C_iu}|T_ic|\nonumber \\ 
&\leq\frac{\epsilon^2}{4}\left|\bigcup_{i=k+1}^nT_iC_iu\right|\nonumber \\ 
&\leq\frac{\epsilon}{4}|A_ku|,
\end{align}
where we used $\frac{1}{2}$-disjointness to obtain the penultimate inequality, as follows. Use $\frac{1}{2}$-disjointness to find, for each $i\in\{k+1,\ldots,n\}$ and $c\in C_iu$, a subset $\widehat{T_{i,c}}\subset T_i$ such that $|\widehat{T_{i,c}}c|\geq\frac{1}{2}|T_ic|$, and such that the collection $\{\widehat{T_{i,c}}c\mid i\in\{k+1,\ldots,n\}, c\in C_iu\}$ is pairwise disjoint. Then we have
\begin{align*}
\left|\bigcup_{i=k+1}^nT_iC_iu\right|&=\left|\bigcup_{i=k+1}^n\bigcup_{c\in C_iu}T_ic\right|\\
&\geq\left|\bigsqcup_{i=k+1}^n\bigsqcup_{c\in C_iu}\widehat{T_{i,c}}c\right|\\
&=\sum_{i=k+1}^n\sum_{c\in C_iu}|\widehat{T_{i,c}}c|\\
&\geq\frac{1}{2}\sum_{i=k+1}^n\sum_{c\in C_iu}|T_ic|.
\end{align*} 
Notice that all the sums in the computation of (\ref{unionboundary}) are finite. In particular, one can see that $|C_iu|<\infty$ for each $i$ and each $u\in G^{(0)}$, because $T_iC_iu\subset Au$ which is finite, and so $\infty>|T_iC_iu|=\sum_{c\in C_iu}|T_ir(c)|\geq|C_iu|\inf_{w\in G^{(0)}}|T_iw|\geq |C_iu|$ because $G^{(0)}\subset T_i$. 

Since $A$ is $(T_n,\epsilon^2/4)$-invariant, and $T_k\subset T_n$, $A$ is also $(T_k,\epsilon^2/4)$-invariant. Thus, for each $u\in s(A)$, $|I_{T_k}(Au)|=|\{a\in Au\mid T_ka\subset A\}|\geq(1-\epsilon^2/4)|Au|$, and so, for each $u\in s(A_k)\subset Z_k$, we have
\begin{align*}
|I_{T_k}(A_ku)|&=|\{a\in A_ku\mid T_ka\subset A_ku\}|\\
&=\left|I_{T_k}(Au)\setminus\bigcup_{i=k+1}^n\left(T_iC_iu\cup\bigcup_{c\in C_iu}\partial_{T_k}(T_icu)\right)\right|\\
&\geq|I_{T_k}(Au)|-\left|\bigcup_{i=k+1}^nT_iC_iu\right|-\left|\bigcup_{i=k+1}^n\bigcup_{c\in C_iu}\partial_{T_k}(T_icu)\right|\\
&\geq\left(1-\frac{\epsilon^2}{4}\right)|Au|-(|Au|-|A_ku|)-\frac{\epsilon}{4}|A_ku|\\
&=|A_ku|-\frac{\epsilon}{4}|A_ku|-\frac{\epsilon^2}{4}|Au|\\
&\geq\left(1-\frac{\epsilon}{2}\right)|A_ku|,
\end{align*}
which shows that $A_k$ is $(T_k, \frac{\epsilon}{2})$-invariant. 
In the first equality, we think of the complement as enforcing additional constraints on the elements of $I_{T_k}(Au)$ to get elements of the left-hand side. Clearly, such elements cannot lie in $\bigcup_{i=k+1}^nT_iC_iu$, as this set is the complement of $A_ku$ in $Au$. Furthermore, if an element $a\in I_{T_k}(Au)$ lies in $\partial_{T_k}(T_icu)$ for some $i\in\{k+1,\ldots,n\}$ and $c\in C_i$, then there exists $t\in T_k$ such that $ta\in T_icu\subset \bigcup_{i=k+1}^nT_iC_iu=Au\setminus A_ku$, so such elements cannot lie in the left-hand set. To obtain the third line, we have used inequality~\eqref{unionboundary} and the fact that $\bigcup_{i=k+1}^nT_iC_iu=Au\setminus A_ku$. The final line uses the fact that $|Au|\leq\epsilon^{-1}|A_ku|$, since $u\in Z_k$.

Thus, we can apply Lemma~\ref{eventranslates} to see that the collection of right translates of $T_k$ which lie in $A_k$ form a $\beta_k$-even covering of $A_k$, where
\[\beta_k=\frac{\inf_{w\in G^{(0)}}|T_kw|}{\sup_{w\in G^{(0)}}|wT_k|}\left(1-\frac{\epsilon}{2}\right).\]
Observe that $\beta_k\geq\frac{\inf|T_kw|}{\sup|wT_k|}(1-\epsilon)$ and so, by condition (ii), we see that $\beta_k>1/m$. Therefore, the collection $\{T_ka\mid a\in A_k,\ T_ka\subset A_k\}$ is a $(1/m)$-even covering of $A_k$. Use Lemma~\ref{evendisjoint} to obtain, for each $u\in s(A_k)$, an $\epsilon$-disjoint subcollection $\{T_kcu\mid c\in C_{k,u}\}$ of these translates which $(\epsilon/m)$-covers $A_k$, and set $C_k=\bigsqcup_{u\in s(A_k)}C_{k,u}$. 

Note that $\bigcup_{i=k}^n\{T_ic\mid c\in C_i\}$ is $\epsilon$-disjoint, because it is the union of two $\epsilon$-disjoint collections, $\{T_kc\mid c\in C_k\}$ and $\bigcup_{i=k+1}^n\{T_ic\mid c\in C_i\}$, which are such that the members of each collection are disjoint from all the members of the other. All that remains to show is that the new collection $\lambda_k$-covers $A$. Indeed, it is enough to show that it $\lambda_k$-covers $Au$ for each $u\in Z_k$, since we already know the collection $(1-\epsilon)$-covers $Au$ for each $u\notin Z_k$. 

We assumed that $\bigcup_{i=k+1}^nT_iC_i$ was a $\lambda_{k+1}$-cover of $A$, so, at each unit $u\in s(A)$, the remainder has cardinality at most $(1-\lambda_{k+1})|Au|$. For each $u\in Z_k$, we constructed an $(\epsilon/m)$-cover of this remainder. Thus, for each $u\in Z_k$, the collection $\bigcup_{i=k}^n T_iC_iu$ is an $\alpha_k$-cover of $Au$ where
\[\alpha_k=\lambda_{k+1}+\frac{\epsilon}{m}(1-\lambda_{k+1}).\]

If $\lambda_{k+1}=1-\epsilon$, we obtain $\alpha_k=1-\epsilon+\epsilon^2/m>1-\epsilon\geq\lambda_k$.

 If $\lambda_{k+1}=1-(1-\epsilon/m)^{n-k}$, we obtain 
\begin{align*}
\alpha_k&=1-\left(1-\frac{\epsilon}{m}\right)^{n-k}+\frac{\epsilon}{m}\left(1-\frac{\epsilon}{m}\right)^{n-k}\\
&=1-\left(1-\frac{\epsilon}{m}\right)^{n-k}\left(1-\frac{\epsilon}{m}\right)\\
&=1-\left(1-\frac{\epsilon}{m}\right)^{n-k+1}\\
&\geq\lambda_k.
\end{align*}
In both cases, $\alpha_k\geq\lambda_k$, so $\bigcup_{i=k}^nT_iC_i$ is a $\lambda_k$-cover. 
\end{proof}

\section{Groupoid crossed products and conditional expectations}\label{sec:gpoidcrossedproducts}

In order to keep the paper as self-contained as possible, we now briefly review of the theory of groupoid crossed products introduced in \cite{Ren87}. There are several excellent treatments in the literature \cite{KS,Goehle}, but we take the approach of \cite{MuhWil}, which strongly emphasises the bundle structure. 

The main ideas for the reader to take away from this section are that we are trying to get our hands on formal sums which interact in a similar manner as elements of a group crossed product, but which respect the fibred structure of the groupoid. The end of the section contains some new material, in which we consider a conditional expectation on groupoid crossed products and use this to generalise a lemma of \cite{Phillips} for use in our main result (see pages~\pageref{condexp}--\pageref{endcondexp}).

In order to better focus on our examples of interest, we restrict our development to the topological setting, where the crossed product arises from the action of a groupoid on a space. 
\begin{definition}[{\cite[Definition~3.12]{Goehle}}]
Let $Y$ be a locally compact Hausdorff space. We say that a $C^*$-algebra $A$ is a $C_0(Y)$\textit{-algebra} if there exists a $*$-homomorphism $\Phi_A$ from $C_0(Y)$ into the center of the multiplier algebra $ZM(A)$ which is nondegenerate in the sense that the set
\[\Phi_A(C_0(Y))\cdot A\coloneqq\textnormal{span}\{\Phi_A(f)a\mid f\in C_0(Y), a\in A\}\]
is dense in $A$. 

In the case that $Y$ is compact, we instead say that $A$ is a \textit{$C(Y)$-algebra}.
\end{definition}
Suppose that $G$ is a groupoid acting continuously on a compact Hausdorff space $X$, with the associated continuous surjection $r_X:X\rightarrow G^{(0)}$ (observe that this implies that $G^{(0)}$ is compact). Then $A=C(X)$ becomes a $C(G^{(0)})$-algebra under the $*$-homomorphism $\Phi_A(f)=f\circ r_X$. 

For $u\in G^{(0)}$, let $I_u\subset C(X)$ denote the ideal of functions which vanish on $r_X^{-1}(u)$. Denote the quotient $A(u)\coloneqq A/I_u$, and the image of $a\in A$ under the associated quotient map by $a(u)$. Observe that $A(u)\cong C(r_X^{-1}(u))$. Denote the disjoint union $\AA\coloneqq\bigsqcup_{u\in G^{(0)}}A(u)$, and associate to $\AA$ the projection map $p:\AA\rightarrow G^{(0)}$ sending $A(u)$ to $u$. We think of $\AA$ as a bundle of $C^*$-algebras over $G^{(0)}$ and use \cite[Theorem~C.27]{Williams} to equip $\AA$ with a topology so that the map $a\mapsto(u\mapsto a(u))$ becomes an isomorphism from $A$ to the set of continuous sections $\Gamma(G^{(0)},\AA)\coloneqq\{f:G^{(0)}\rightarrow\AA\mid f\textnormal{ is continuous and }p(f(u))=u\textnormal{ for every }u\in G^{(0)}\}$.

The action $G\curvearrowright X$ induces an action $\alpha:G\curvearrowright C(X)$ via $\alpha_g(f)(x)=f(g^{-1}x)$, and the triple $(C(X),G,\alpha)$ is an example of a \textit{groupoid dynamical system} \cite{MuhWil}. Thus, for each $g\in G$, we obtain an isomorphism $\alpha_g:A(s(g))\rightarrow A(r(g))$ between fibres of the bundle $\AA$.

Now, suppose that $G$ is locally compact and Hausdorff, so that it admits a Haar system $\{\lambda^u\}_{u\in G^{(0)}}$. Consider the pull-back bundle $r^*\AA$ of $\AA$ by the range map $r:G\rightarrow G^{(0)}$. This is a bundle over $G$ with fibres $r^*\AA(g)\cong A(r(g))$. We equip the set of continuous compactly supported sections $\Gamma_c(G,r^*\AA)$ with the operations
\[f\ast g(h)=\int_Gf(t)\alpha_t(g(t^{-1}h))d\lambda^{r(h)}(t)\quad \textnormal{and}\quad f^*(h)=\alpha_h(f(h^{-1})^*).\]
The (reduced) groupoid crossed product will be the completion of $\Gamma_c(G,r^*\AA)$ in the (reduced) norm arising from a certain class of representations, which we now detail. We remark that as in \cite{MuhWil} we could instead immediately define a (non-$C^*$) norm $\|\cdot\|_I$ on $\Gamma_c(G,r^*\AA)$ directly and define the crossed product to be the enveloping $C^*$-algebra. However, we will need to make use of the representations we are about to define sooner or later, so we give a full exposition here. As in the group case, we will obtain a representation of $\Gamma_c(G,r^*\AA)$ by combining representations of $A$ and of $G$ into an integrated form. Due to the fibrations involved in the groupoid case, we must represent onto bundles of Hilbert spaces. 
\begin{definition}[{\cite[Definition~3.61]{Goehle}}]\label{analyticBorelHilbertbundle}
Suppose $\hh=\{H(y)\}_{y\in Y}$ is a collection of separable nonzero complex Hilbert spaces indexed by an analytic Borel space $Y$. The \textit{total space} is defined to be
\[Y\ast\hh\coloneqq\{(y,h)\mid y\in Y, h\in H(y)\}\]
and is equipped with the obvious projection map $\pi:Y\ast\hh\rightarrow Y$. We say that $Y\ast\hh$ is an \textit{analytic Borel Hilbert bundle} if it is equipped with a $\sigma$-algebra which makes it an analytic Borel space (see \cite[Appendix~D]{Williams}) such that
\begin{enumerate}[label=\alph*)]
\item $\pi$ is measurable; and
\item there is a sequence $\{f_n\}_{n\in\N}$ of sections (called a \textit{fundamental sequence} for $Y\ast\hh$) such that
\begin{enumerate}[label=(\roman*)]
\item the maps $\bar{f_n}:Y\ast\hh\rightarrow\C$ given by
\[\bar{f_n}(y,h)=\langle f_n(y),h\rangle_{H(y)}\]
are measurable (with respect to the Borel $\sigma$-algebra on $\C$) for each $n$;
\item for each $n$ and $m$, the map $Y\rightarrow\C$ given by
\[y\mapsto\langle f_n(y),f_m(y)\rangle_{H(y)}\]
is measurable; and
\item the collection of functions $\{\pi\}\cup\{\bar{f_n}\}_{n\in\N}$ separates points of $Y\ast\hh$. 
\end{enumerate}
\end{enumerate}
The set of measurable sections of an analytic Borel Hilbert bundle $Y\ast\hh$ is denoted by $B(Y\ast\hh)$. 
\end{definition}
\begin{notation}
Given Hilbert spaces $H_1$ and $H_2$, denote the the collection of unitary transformations $U:H_1\rightarrow H_2$ by $U(H_1,H_2)$.
\end{notation}
\begin{definition}
If $Y\ast\hh$ is an analytic Borel Hilbert bundle then its \textit{isomorphism groupoid} is defined as
\[\textnormal{Iso}(Y\ast\hh)\coloneqq\{(x,V,y)\mid V\in U(H(y),H(x))\}\]
equipped with the smallest $\sigma$-algebra such that the map $(x,V,y)\mapsto \langle Vf(y),g(x)\rangle_{H(x)}$ is measurable for all measurable sections $f,g\in B(Y\ast\hh)$. 

We define the set of composable pairs as
\[\textnormal{Iso}(Y\ast\hh)^{(2)}\coloneqq\{((x,V,y),(w,U,z))\in\textnormal{Iso}(Y\ast\hh)\times\textnormal{Iso}(Y\ast\hh)\mid y=w\}\]
and the operations are given by
\[(x,V,y)(y,U,z)=(x,VU,z)\quad\textnormal{and}\quad(x,V,y)^{-1}=(y,V^*,x).\]
\end{definition}
\begin{definition}\label{gpoidrep}
Let $G$ be a locally compact \'etale groupoid. A \textit{groupoid representation} of $G$ is a triple $(\mu, G^{(0)}\ast\hh,U)$ where $\mu$ is a (finite) Radon measure on $G^{(0)}$, $G^{(0)}\ast\hh$ is an analytic Borel Hilbert bundle, and $U:G\rightarrow\textnormal{Iso}(G^{(0)}\ast\hh)$ is a measurable groupoid homomorphism (with respect to the Borel $\sigma$-algebra on $G$) such that, for each $g\in G$, there exists a unitary $U_g: H(s(g))\rightarrow H(r(g))$ such that $U(g)=(r(g),U_g,s(g))$. 
\end{definition}
\begin{definition}[{\cite[Definition~3.80]{Goehle}}]\label{directintegralbundle}
Suppose that $Y\ast\hh$ is an analytic Borel Hilbert bundle and $\mu$ is a measure on $Y$. Consider
\[\LL^2(Y\ast\hh,\mu)\coloneqq\left\{f\in B(Y\ast\hh)\ \middle|\ \int_Y\|f(y)\|^2_{H(y)}d\mu(y)<\infty\right\}.\]
Equip $\LL^2(Y\ast\hh,\mu)$ with the operations of pointwise addition and scalar multiplication and let $L^2(Y\ast\hh,\mu)$ be the quotient of $\LL^2(Y\ast\hh,\mu)$ such that sections which agree $\mu$-almost everywhere are identified. When equipped with the operations inherited from $\LL^2(Y\ast\hh,\mu)$ and the inner product
\[\langle f,g\rangle=\int_Y\langle f(y),g(y)\rangle_{H(y)}d\mu(y),\]
$L^2(Y\ast\hh,\mu)$ becomes a Hilbert space, which we call the \textit{direct integral} of $\hh$ with respect to $\mu$. 
\end{definition}
\begin{definition}[{\cite[Definition~3.85]{Goehle}}]
Let $Y\ast\hh$ be an analytic Borel Hilbert bundle and $\mu$ a finite measure on $Y$. An operator $T$ on $L^2(Y\ast\hh,\mu)$ is called \textit{diagonal} if there exists a bounded measurable function $\phi: Y\rightarrow\C$ such that
\[T(h)(y)=\phi(y)h(y)\]
for $\mu$-almost every $y\in Y$. Given such a function $\phi$, the associated diagonal operator is denoted by $T_\phi$. 
\end{definition}
\begin{definition}[{\cite[Definition~3.98]{Goehle}}]
Let $Y$ be a second countable locally compact Hausdorff space, $A$ a $C_0(Y)$-algebra, $Y\ast\hh$ an analytic Borel Hilbert bundle, and $\mu$ a finite Borel measure on $Y$. We say that a representation $\pi:A\rightarrow B(L^2(Y\ast\hh,\mu))$ is \textit{$C_0(Y)$-linear} if
\[\pi(\phi\cdot a)=T_\phi\pi(a)\]
for every $a\in A$ and $\phi\in C_0(Y)$. 
\end{definition}
\begin{definition}
Suppose $Y\ast\hh$ is an analytic Borel Hilbert bundle with a fundamental sequence $\{f_n\}$. A family of bounded linear operators $T(y):H(y)\rightarrow H(y)$ is a \textit{Borel field of operators} if the map
\[y\mapsto\langle T(y)(f_n(y)), f_m(y)\rangle_{H(y)}\]
is measurable with respect to the Borel $\sigma$-algebra on $\C$ for all $m$ and $n$. 
\end{definition}
The next two propositions detail the formation of a $C_0(Y)$-linear representation of $A$ onto $L^2(Y\ast\hh,\mu)$ from representations of each $A(y)$ onto $H(y)$, and the decomposition of such a representation back into its fibrewise representations.
\begin{prop}[{\cite[Proposition~3.93]{Goehle}}]
Suppose $Y$ is a second countable locally compact Hausdorff space, $A$ a separable $C_0(Y)$-algebra, $Y\ast\hh$ an analytic Borel Hilbert bundle, and $\mu$ a $\sigma$-finite Borel measure on $Y$. Given a collection of representations $\{\pi_y:A(y)\rightarrow B(H(y))\mid y\in Y\}$ such that for each $a\in A$ the set $\{\pi_y(a(y))\mid y\in Y\}$ is a Borel field of operators, we can form a representation
\[\pi=\int_Y^\oplus \pi_y d\mu(y)\] 
of $A$ on $L^2(Y\ast\hh,\mu)$ called the \textnormal{direct integral}, and defined for $a\in A$ by
\[\pi(a)=\int_Y^\oplus \pi_y(a(y))d\mu(y).\]
\end{prop}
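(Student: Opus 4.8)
The plan is to define $\pi(a)$ as the decomposable operator determined by the field $y \mapsto \pi_y(a(y))$ and then to verify that $a \mapsto \pi(a)$ is a $*$-homomorphism by checking each identity fibrewise. Concretely, for $a \in A$ and a measurable section $\xi \in L^2(Y\ast\hh,\mu)$ I would set $(\pi(a)\xi)(y) = \pi_y(a(y))\xi(y)$, and the first task is to see that this prescription yields a genuine bounded operator. The boundedness estimate is immediate once measurability is in hand: each $\pi_y$ is a $*$-homomorphism of $C^*$-algebras, hence contractive, while $a(y)$ is the image of $a$ under a quotient map so $\|a(y)\| \le \|a\|$; thus $\|\pi_y(a(y))\xi(y)\| \le \|a\|\,\|\xi(y)\|$ pointwise, and integrating gives $\|\pi(a)\xi\| \le \|a\|\,\|\xi\|$. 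In particular the field $y \mapsto \pi_y(a(y))$ is uniformly norm-bounded by $\|a\|$.

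The main obstacle is showing that $y \mapsto \pi_y(a(y))\xi(y)$ is a measurable section of $Y\ast\hh$ for \emph{every} measurable $\xi$, not merely for the fundamental sections. Measurability of a section can be detected by pairing against the fundamental sequence $\{f_m\}$: a section $\zeta$ is measurable precisely when $y \mapsto \langle \zeta(y), f_m(y)\rangle_{H(y)}$ is measurable for every $m$. For $\xi = f_n$ the required measurability of $y \mapsto \langle \pi_y(a(y))f_n(y), f_m(y)\rangle_{H(y)}$ is exactly the hypothesis that $\{\pi_y(a(y))\}$ is a Borel field of operators. To pass to an arbitrary measurable section I would argue that the collection of $\xi$ for which $y \mapsto \langle \pi_y(a(y))\xi(y), f_m(y)\rangle_{H(y)}$ is measurable for all $m$ is closed under measurable linear combinations of the $f_n$ and under pointwise limits, and that every measurable section arises from the fundamental sequence in this way. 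The uniform bound $\|\pi_y(a(y))\| \le \|a\|$ secured above is precisely what lets the limit pass through the operators, so that measurability is preserved in the limit. This reduces the whole step to the standard fact that a uniformly bounded Borel field of operators assembles into a decomposable, in particular bounded, operator on the direct integral.

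With $\pi(a)$ in hand as a bounded operator for each $a$, the remaining verifications are routine and all take place fibrewise. Linearity of $a \mapsto \pi(a)$ follows from linearity of the quotient maps $a \mapsto a(y)$ and of each $\pi_y$; multiplicativity follows from $(ab)(y) = a(y)b(y)$ together with $\pi_y(a(y)b(y)) = \pi_y(a(y))\pi_y(b(y))$, applied to $\xi(y)$. For the $*$-structure I would compute, for measurable sections $\xi,\eta$,
\[
\langle \pi(a)\xi, \eta\rangle = \int_Y \langle \pi_y(a(y))\xi(y), \eta(y)\rangle_{H(y)}\,d\mu(y) = \int_Y \langle \xi(y), \pi_y(a(y)^*)\eta(y)\rangle_{H(y)}\,d\mu(y) = \langle \xi, \pi(a^*)\eta\rangle,
\]
using $\pi_y(a(y))^* = \pi_y(a(y)^*) = \pi_y(a^*(y))$, which holds because the quotient map and each $\pi_y$ are $*$-homomorphisms. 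This identifies $\pi(a)^* = \pi(a^*)$ and completes the proof that $\pi = \int_Y^\oplus \pi_y\, d\mu$ is a representation of $A$ on $L^2(Y\ast\hh,\mu)$.
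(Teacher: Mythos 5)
This proposition is quoted in the paper from Goehle's thesis without proof, so there is no in-paper argument to compare against; your proposal is the standard direct-integral construction (uniform fibrewise bound $\|\pi_y(a(y))\|\le\|a\|$, measurability of the field, then fibrewise verification of the $*$-homomorphism identities) and is correct. The one step you leave implicit—that every measurable section is a pointwise limit of measurable-coefficient combinations of the fundamental sections (via Gram--Schmidt orthonormalization of $\{f_n\}$), or, more directly, that $\langle \pi_y(a(y))\xi(y), f_m(y)\rangle_{H(y)} = \langle \xi(y), \pi_y(a^*(y))f_m(y)\rangle_{H(y)}$ with $y\mapsto \pi_y(a^*(y))f_m(y)$ measurable by the Borel-field hypothesis applied to $a^*$—is standard and does not affect correctness.
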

\begin{prop}[{\cite[Proposition~3.99]{Goehle}}]
Suppose $Y$ is a second countable locally compact Hausdorff space, $A$ a separable $C_0(Y)$-algebra, $Y\ast\hh$ an analytic Borel Hilbert bundle, and $\mu$ a finite Borel measure on $Y$. Given a $C_0(Y)$-linear representation $\pi:A\rightarrow L^2(Y\ast\hh,\mu)$, there exists for each $y\in Y$ a (possibly degenerate) representation $\pi_y:A(y)\rightarrow B(H(y))$ such that for each $a\in A$, the set $\{\pi_y(a(y))\}_{y\in Y}$ is an essentially bounded Borel field of operators, and
\[\pi=\int_Y^\oplus\pi_y d\mu(y).\]
\end{prop}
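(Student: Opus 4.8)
The plan is to obtain the decomposition from von Neumann's reduction theory, using the $C_0(Y)$-linearity of $\pi$ to place $\pi(A)$ inside the algebra of decomposable operators. First I would form the abelian von Neumann subalgebra $\DD\subset B(L^2(Y\ast\hh,\mu))$ generated by the diagonal operators $\{T_\phi\mid\phi\in C_0(Y)\}$; upon taking the weak closure, $\DD\cong L^\infty(Y,\mu)$. The hypothesis $\pi(\phi\cdot a)=T_\phi\pi(a)$, combined with the fact that $\phi$ is central, shows that each $\pi(a)$ commutes with every $T_\phi$, so $\pi(A)\subset\DD'$. The central structural input is that $\DD'$ is exactly the algebra of decomposable operators on the direct integral: every $T\in\DD'$ admits a ($\mu$-a.e.\ unique) representation $T=\int_Y^\oplus T(y)\,d\mu(y)$ as an essentially bounded Borel field $T(y)\in B(H(y))$, with $\|T\|$ equal to the essential supremum of $y\mapsto\|T(y)\|$. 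This is the deep ingredient, and I would quote it from standard reduction theory (see \cite[Appendix~D]{Williams}).

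Applying this to each $\pi(a)$ produces, for every $a\in A$, an essentially bounded Borel field $y\mapsto\pi(a)(y)$ with $\pi(a)=\int_Y^\oplus\pi(a)(y)\,d\mu(y)$, determined up to a $\mu$-null set. The task is then to promote these fields to honest fibrewise representations on a single conull set, and here separability does the work. Fix a countable subset $A_0\subset A$ that is norm-dense and closed under addition, multiplication, the involution, and scalar multiplication by $\Q+i\Q$. For each of the countably many pairs $a,b\in A_0$ the identities $\pi(a+b)(y)=\pi(a)(y)+\pi(b)(y)$, $\pi(ab)(y)=\pi(a)(y)\pi(b)(y)$, $\pi(a^*)(y)=\pi(a)(y)^*$, and $\|\pi(a)(y)\|\leq\|a\|$ hold for $\mu$-a.e.\ $y$, by uniqueness of the decomposition and the multiplicativity of the direct-integral calculus. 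Discarding the countable union of these exceptional null sets leaves a conull $Y_0$ on which $a\mapsto\pi(a)(y)$ is a contractive $*$-homomorphism on $A_0$, and hence extends by continuity to a representation of $A$ on $H(y)$ for each $y\in Y_0$.

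Next I would verify that this representation factors through the fibre $A(y)=A/I_y$. Using $C_0(Y)$-linearity again, for $a=\phi\cdot b$ with $\phi(y)=0$ one has $\pi(a)(y)=\phi(y)\pi(b)(y)=0$; running this over a countable generating set for $I_y$ and invoking continuity gives $\pi(a)(y)=0$ whenever $a(y)=0$, so $\pi_y(a(y))\coloneqq\pi(a)(y)$ is well defined on $A(y)$. By folding these $I_y$-generators into the countable family from the outset, this holds on the same conull $Y_0$; on the complementary null set one simply sets $\pi_y=0$. Reassembling, $\{\pi_y(a(y))\}_{y\in Y}$ is by construction a Borel field of operators, essentially bounded by $\|a\|$, and $\pi=\int_Y^\oplus\pi_y\,d\mu(y)$ in the sense of the direct-integral construction recorded in \cite[Proposition~3.93]{Goehle}.

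I expect the main obstacle to be the measure-theoretic bookkeeping of the second paragraph: the decomposition theorem only delivers each algebraic relation pointwise up to a null set for each fixed argument, and the genuine content is assembling a single null set outside of which $a\mapsto\pi(a)(y)$ is simultaneously additive, multiplicative, involution-preserving, contractive, and $I_y$-annihilating for all $a$ at once. Separability of $A$ and second countability of $Y$ are precisely what make this countable reduction possible; the only real subtlety is that well-definedness over $A(y)$ must be secured on the same conull set as the homomorphism property, which is why the $I_y$-generators are incorporated into the countable family before the null sets are discarded.
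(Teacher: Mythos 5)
This proposition is imported verbatim from \cite[Proposition~3.99]{Goehle} and the paper gives no proof of it, so there is no in-paper argument to compare against; your sketch is the standard reduction-theory proof and, as far as I can tell, essentially the argument of the cited source: $C_0(Y)$-linearity places $\pi(A)$ in the commutant of the diagonal algebra, the commutant is the decomposable operators, and a countable-density bookkeeping upgrades the a.e.\ decompositions to fibrewise $*$-homomorphisms. Two small points to tighten: the commutation $T_\phi\pi(a)=\pi(a)T_\phi$ is not immediate from $\pi(\phi\cdot a)=T_\phi\pi(a)$ alone --- apply that identity to $a^*$ and $\bar\phi$, take adjoints, and use centrality of $\Phi_A(\phi)$ (or assume nondegeneracy); and for the $I_y$ step, a ``countable generating set for $I_y$'' varies with $y$, so instead fold a countable uniformly dense family $\Phi_0\subset C_0(Y)$ into the null-set bookkeeping, use continuity on the fixed conull set to get $\pi_y(\phi\cdot b)=\phi(y)\pi_y(b)$ for all $\phi\in C_0(Y)$ and $b\in A$, and then annihilation of $I_y$ (hence well-definedness on $A(y)$) follows for every $y$ in that conull set.
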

\begin{definition}[{\cite[Definition~3.70]{Goehle}}]\label{modular}
Suppose $G$ is a locally compact Hausdorff groupoid with Haar system $\lambda=\{\lambda^u\}$, where $\lambda^u$ is supported on $G^u=r^{-1}(u)$. Denote the images of these measures under the inverse map of the groupoid by $\lambda_u=(\lambda^u)^{-1}$, so that $\lambda_u$ is supported on $G_u=s^{-1}(u)$. Given a Radon measure $\mu$ on $G^{(0)}$, we define the \textit{induced measures} $\nu$ and $\nu^{-1}$ to be the Radon measures on $G$ defined by the equations
\[\nu(f)\coloneqq\int_{G^{(0)}}\int_G f(g)d\lambda^u(g)d\mu(u),\quad\textnormal{and}\]
\[\nu^{-1}(f)\coloneqq\int_{G^{(0)}}\int_G f(g)d\lambda_u(g)d\mu(u)\]
for all $f\in C_c(G)$. We say that the measure $\mu$ is \textit{quasi-invariant} if $\nu$ and $\nu^{-1}$ are mutually absolutely continuous. In this case we write $\Delta$ for the Radon-Nikodym derivative $d\nu/d\nu^{-1}$, and call it the \textit{modular function} of $\mu$. 
\end{definition}

\begin{definition}
Suppose $(A,G,\alpha)$ is a separable groupoid dynamical system, and that $\mu$ is a quasi-invariant measure on $G^{(0)}$. A \textit{covariant representation} $(\mu,G^{(0)}\ast\hh,\pi,U)$ of $(A,G,\alpha)$ consists of a unitary representation $(\mu,G^{(0)}\ast\hh,U)$ of $G$, and a $C_0(G^{(0)})$-linear representation $\pi:A\rightarrow B(L^2(G^{(0)}\ast\hh,\mu))$. We require that if $\{\pi_u\}$ is a decomposition of $\pi$, and $\nu$ is the measure induced by $\mu$, there exists a $\nu$-null set $N\subset G$ such that for all $g\notin N$ we have
\[U_g\pi_{s(g)}(a)=\pi_{r(g)}(\alpha_g(a))U_g\]
for every $a\in A(s(g))$. 
\end{definition}
\begin{remark}
We will usually simply denote the covariant representation by $(\pi, U)$ and take for granted that there is a quasi-invariant measure $\mu$ and a Borel Hilbert bundle $G^{(0)}\ast\hh$ in the background. 
\end{remark}
\begin{definition}
Suppose $(A,G,\alpha)$ is a separable dynamical system and that $(\mu, G^{(0)}\ast\hh,\pi, U)$ is a covariant representation. Let $\pi=\int_{G^{(0)}}^\oplus \pi_u du$ be a decomposition of $\pi$. Then there is a $I$-norm decreasing nondegenerate $*$-representation $\pi\rtimes U$ of $\Gamma_c(G,r^*\AA)$ on $L^2(G^{(0)}\ast\hh,\mu)$ called the \textit{integrated form} of $(\pi,U)$, and given by
\[\pi\rtimes U(f)(h)(u)=\int_G\pi_u(f(g))U_gh(s(g))\Delta(g)^{-\frac{1}{2}}d\lambda^u(g)\]
for $f\in\Gamma_c(G,r^*\AA)$, $h\in\LL^2(G^{(0)}\ast\hh,\mu)$ and $u\in G^{(0)}$. 
\end{definition}
With all this theory in hand, we can define the full crossed product as the completion of $\Gamma_c(G,r^*\AA)$ in the universal norm 
\[\|f\|\coloneqq\sup\{\|\pi\rtimes U(f)\|\mid(\pi,U)\textnormal{ is a covariant representation of }(A,G,\alpha)\}.\]
We will instead work with the reduced crossed product, for which we need to define left-regular representations. Begin with a separable groupoid dynamical system $(A,G,\alpha)$, and a nondegenerate $C_0(G^{(0)})$-linear representation $\pi:A\rightarrow B(L^2(G^{(0)}\ast\hh,\mu))$ with decomposition
\[\pi=\int^\oplus_{G^{(0)}}\pi_ud\mu(u)\]
so that $\pi_u:A(u)\rightarrow B(L^2((G^{(0)}\ast\hh)|_u,\mu))$.
As in \cite[Example~3.84]{Goehle}, form the pull-back bundle
\[s^*(G^{(0)}\ast\hh)=\{(g,h)\mid h\in H(s(g))\}.\]
Let $\nu$ be the induced measure, as in Definition~\ref{modular}. By \cite[Theorem~I.5]{Williams}, we can use the source map of $G$ to decompose $\nu$ into finite measures $\nu_u$ on $G_u$. Denote by $\nu^u$ the image of $\nu_u$ under inversion in $G$. For each $u\in G^{(0)}$ we can obtain a separable Hilbert space $K(u)=L^2(s^*(G^{(0)}\ast\hh)|_{G^u},\nu^{u})$, by considering the integrable sections of the bundle $s^*(G^{(0)}\ast\hh)$ which are supported on $G^u$. We use these as fibres to form a Borel Hilbert bundle $G^{(0)}\ast\kk$. Denote by $\Delta$ the modular function which arises from $\mu$. As in \cite[Example~3.105]{Goehle}, for each $g\in G$, one can obtain a unitary
\[\lambda_g:L^2(s^*(G^{(0)}\ast\hh)|_{G^{s(g)}},\nu^{s(g)})\rightarrow L^2(s^*(G^{(0)}\ast\hh)|_{G^{r(g)}},\nu^{r(g)})\]
by defining $\lambda_gf(h)=\Delta(g)^{1/2}f(g^{-1}h)$ for each $h\in G^{r(g)}$. 

Also, define $\tilde\pi:A\rightarrow B(L^2(G^{(0)}\ast\kk,\mu))$ by
\[(\tilde\pi(a)f(u))(g)=\pi_{s(g)}(\alpha_g^{-1}(a(u)))\left( f(u)(g)\right)\]
where $a\in A$, $f\in L^2(G^{(0)}\ast\kk,\mu)$, $u\in G^{(0)}$ and $g\in G^u$.

One can show that $(\tilde\pi, \lambda)$ defines a covariant representation of $(A,G,\alpha)$. Covariant representations of this special form are known as \textit{left regular representations}. We now discuss the integrated form $\tilde\pi\rtimes \lambda$ of such representations, which will be representations of $\Gamma_c(G,r^*\AA)$ on $L^2(s^*(G^{(0)}\ast\hh),\nu^{-1})$. In fact, instead of working directly with the integrated form $\tilde\pi\rtimes\lambda$, we work with the following representation, which is unitarily equivalent to the integrated form \cite[Example~3.122 and Remark~3.123]{Goehle}:
\[L_\pi(f)h(g)=\int_{G^{r(g)}} \pi_{s(g)}(\alpha_g^{-1}(f(t)))h(t^{-1}g)d\lambda^{r(g)}(t),\]
where $f\in\Gamma_c(G,r^*\AA)$. $h\in L^2(s^*(G^{(0)}\ast\hh),\nu^{-1})$, and $g\in G$.

Due to the equivalence of $L$ and $\tilde\pi\rtimes\lambda$, we refer to $L_\pi$ as the \textit{integrated left regular representation} associated to $\pi$. When the choice of $C_0(G^{(0)})$-linear representation is clear, we will denote this representation simply by $L$. 
\begin{definition}
Suppose $(A,G,\alpha)$ is a separable groupoid dynamical system. We define the \textit{reduced norm} on $\Gamma_c(G,r^*\AA)$ by
\[\|f\|_r\coloneqq\sup\{\|L(f)\|\mid L\textnormal{ is an integrated left regular representation of }(A,G,\alpha)\}.\]
The completion of $\Gamma_c(G,r^*\AA)$ in this norm is a $C^*$-algebra called the \textit{reduced groupoid crossed product} of $A$ by $G$, and is denoted $A\rtimes_{\alpha,r}G$, or just $A\rtimes_r G$ if the choice of action is clear.
\end{definition}

\label{condexp}
For the remainder of the section, we extend some results of \cite{Phillips} to the groupoid setting. We will need the following definition, which is introduced in \cite{Cuntz} (see also \cite[Remark~1.2]{Phillips} for this formulation).

\begin{definition}\label{Cuntzsubequivalence}
Let $A$ be a $C^*$-algebra and let $a,b\in A$ be positive elements. We say that $a$ is \textit{Cuntz subequivalent to $b$ over $A$}, and write $a\precsim_A b$, if there exists a sequence $(r_n)_{n\in\N}$ in $A$ with $a=\lim_{n\rightarrow\infty}r_nbr_n^*$. 
\end{definition}

We aim to show (Lemma~\ref{Phillips2}) that for any positive element $a\in C(X)\rtimes_r G$, we can find a positive element of $C(X)$ which is Cuntz subequivalent to $a$. This will be useful to prove $\ZZ$-stability in Theorem~\ref{tilingZstable}.

We make a remark on notation. In the sequel, it will be important to distinguish between a section of the pull-back bundle, $f\in\Gamma_c(G,r^*\AA)$, and the images of elements of $G$ under this section, $f_g\in C(r_X^{-1}(r(g)))$. To do this, we will denote the section $g\mapsto f_g$ for each $g\in G$ by $\sum_{g\in G}f_gU_g$. Observe that this notation is highly suggestive of the image of the section under the integrated form of some covariant representation $(\pi, U)$, and, in fact, we can multiply sections as follows:
\[\left(\sum_{g\in G}a_gU_g\right)\left(\sum_{h\in G}b_hU_h\right)=\sum_{(g,h)\in G^{(2)}}a_gU_gb_hU_h=\sum_{(g,h)\in G^{(2)}}a_g\alpha_g(b_h)U_{gh}.\]

\begin{lemma}\label{gpoidconditionalexp}
Let $f\in\Gamma_c(G,r^*\AA)$, and write $f=\sum_{g\in G}f_gU_g$. Then the map
\[E_\alpha(f)\coloneqq\sum_{u\in G^{(0)}}f_u\]
extends to a faithful conditional expectation $E:C(X)\rtimes_rG\rightarrow C(X)$. 
\end{lemma}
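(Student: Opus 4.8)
The plan is to identify $E_\alpha$ with the operation of restricting a section to the (open) unit space $G^{(0)}\subset G$, and then to realise this restriction as a compression inside a suitable left regular representation. Since $G$ is \'etale, $G^{(0)}$ is open and $r|_{G^{(0)}}=\id$, so for $f\in\Gamma_c(G,r^*\AA)$ the assignment $u\mapsto f_u$ is a continuous compactly supported section of $\AA$ over $G^{(0)}$; under the identification $\Gamma(G^{(0)},\AA)\cong C(X)$ this is exactly $E_\alpha(f)$, so $E_\alpha$ is a well-defined linear map $\Gamma_c(G,r^*\AA)\to C(X)$ fixing $C(X)$. First I would fix a faithful $C_0(G^{(0)})$-linear representation $\pi=\int^\oplus_{G^{(0)}}\pi_u\,d\mu(u)$ of $C(X)$ and pass to the integrated left regular representation $L_\pi$ on $\HH_\pi=L^2(s^*(G^{(0)}\ast\hh),\nu^{-1})$. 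Let $\iota$ denote the isometry $L^2(G^{(0)}\ast\hh,\mu)\hookrightarrow\HH_\pi$ extending a section by zero off $G^{(0)}$. Evaluating the defining integral for $L_\pi$ on sections supported on $G^{(0)}$ (where, because $G$ is \'etale, the Haar integral is a sum and the only surviving term is $t=u$, with $\alpha_u=\id$, while $\nu^{-1}$ restricts to $\mu$ on $G^{(0)}$) collapses to the compression formula $\iota^*L_\pi(f)\iota=\pi(E_\alpha(f))$. As $\pi$ is faithful this gives $\|E_\alpha(f)\|=\|\iota^*L_\pi(f)\iota\|\le\|L_\pi(f)\|\le\|f\|_r$, so $E_\alpha$ is contractive and extends to a bounded map $E\colon C(X)\rtimes_rG\to C(X)$.

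Next I would verify the conditional-expectation axioms. The identity $\pi\circ E=\mathrm{Ad}(\iota^*)\circ L_\pi$ persists on all of $C(X)\rtimes_rG$ by continuity, and since $\mathrm{Ad}(\iota^*)$ and $L_\pi$ are completely positive and $\pi$ is a faithful $*$-homomorphism, $E=\pi^{-1}\circ\mathrm{Ad}(\iota^*)\circ L_\pi$ is completely positive. That $E$ restricts to the identity on $C(X)$ (hence is idempotent onto $C(X)$) and that $E(1)=1$ are immediate from the restriction description, so $E$ is unital and completely positive, whence contractive of norm one. The $C(X)$-bimodularity $E(afb)=aE(f)b$ for $a,b\in C(X)$ I would check on $\Gamma_c(G,r^*\AA)$ using the product rule $\bigl(\sum a_gU_g\bigr)\bigl(\sum b_hU_h\bigr)=\sum a_g\alpha_g(b_h)U_{gh}$: the unit-space coefficient of $afb$ is $a_uf_ub_u$, which is $a\,E_\alpha(f)\,b$ pointwise on $X$; the general case follows by density. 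This shows $E$ is a conditional expectation.

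The main work is faithfulness. On $\Gamma_c(G,r^*\AA)$ the computation is transparent: for $c=\sum_gc_gU_g$ one finds $E_\alpha(c^*c)(u)=\sum_{g\in G^u}\alpha_g\bigl(c_{g^{-1}}^*c_{g^{-1}}\bigr)$, a sum of positive elements of $C(r_X^{-1}(u))$ containing $c_u^*c_u$, so $E_\alpha(c^*c)=0$ forces every $c_g=0$. To upgrade this to the completion I would argue representation by representation: since $\|a\|_r=\sup_\pi\|L_\pi(a)\|$, it suffices to show $L_\pi(a)=0$ for every left regular representation $\pi$ whenever $a\ge0$ and $E(a)=0$. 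Writing $a=b^*b$, the compression formula gives $(L_\pi(b)\iota)^*(L_\pi(b)\iota)=\pi(E(a))=0$, so $L_\pi(b)$ annihilates $\operatorname{ran}(\iota)$, the sections supported on $G^{(0)}$. I would then propagate this to $L_\pi(b)=0$ by exhibiting operators commuting with $L_\pi(C(X)\rtimes_rG)$---namely the right regular representation of $G$---for which $\operatorname{ran}(\iota)$ is cyclic: right translation of a unit-space section along an open bisection $V\subset G$ produces a section supported on $V$, and as $V$ ranges over a cover of the \'etale groupoid $G$ these span a dense subspace of $\HH_\pi$. Since $L_\pi(b)$ commutes with each such operator and kills $\operatorname{ran}(\iota)$, it kills a dense subspace, so $L_\pi(b)=0$ and hence $a=0$. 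The crux, and the step I expect to be most delicate, is precisely this construction of a commuting right regular representation for which the unit-space sections are cyclic: in the non-atomic measure setting one cannot use point masses, so the required density must be handled through bisections and the \'etale disintegration of $\nu^{-1}$ (alternatively one may simply invoke the known faithfulness of the canonical expectation on reduced groupoid crossed products from the work of Renault and Goehle).
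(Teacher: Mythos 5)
Everything in your proposal up to faithfulness is sound, and it is in fact more detailed than the paper's own argument: the paper declares the algebraic and positivity properties routine, refers faithfulness to Sierakowski, and only verifies that the image of $E$ lies in $C(X)$ (via the isomorphism $\Gamma_c(G,r^*\AA)\cong C_c(G\ltimes X)$ and clopenness of $G^{(0)}$ -- the same content as your restriction-to-the-unit-space observation). Your compression identity $\iota^*L_\pi(f)\iota=\pi(E_\alpha(f))$ is correct in the \'etale setting (the Haar integral collapses to the single term $t=g$, and $\nu^{-1}$ restricted to $G^{(0)}$ is $\mu$), and it does give contractivity, complete positivity and $C(X)$-bimodularity as you describe.

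The genuine gap is in the mechanism you propose for upgrading faithfulness from $\Gamma_c(G,r^*\AA)$ to the completion. Right translation along a bisection $V$ is \emph{not} a well-defined operator on $L^2(s^*(G^{(0)}\ast\hh),\nu^{-1})$: for a section $h$ and $g$ with $s(g)=r(v)$, the vector $h(gv)$ lies in the fibre $H(s(v))$, whereas the fibre of $s^*(G^{(0)}\ast\hh)$ over $g$ is $H(s(g))=H(r(v))$. To repair this one needs unitaries $H(s(v))\to H(r(v))$ implementing $\alpha$ on the fibres, and a general $C_0(G^{(0)})$-linear representation $\pi$ carries no such unitaries (the unitaries $\lambda_g$ in the left-regular construction implement \emph{left} translation). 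The same failure is already visible for group crossed products: on $H\otimes\ell^2(\Gamma)$ the operator $1\otimes\rho_s$ does not commute with the copy of $A$, so the ``right regular representation'' lies in the commutant only when the action is unitarily implemented on $H$. Your translation works on sections supported on $G^{(0)}$ only because there $s(v^{-1})=r(v)$, which is why the cyclicity computation looks plausible; but the operators themselves do not exist, so the commutant argument collapses, and the real delicacy is this fibre mismatch rather than the modular correction you flag. Two repairs: (a) do exactly what the paper does and quote the known faithfulness of the canonical expectation (Sierakowski/Renault), which your own fallback already suggests; or (b) avoid the commutant altogether: cover $G$ up to $\nu^{-1}$-null sets by countably many Borel bisections $B_i$ with $B_0=G^{(0)}$, and let $P_i$ be the projection onto sections supported on $B_i$. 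For $f\in\Gamma_c(G,r^*\AA)$, injectivity of $s$ on $B_i$ forces $t^{-1}g=g$ in the convolution formula, so $P_iL_\pi(f)P_i$ depends only on $E_\alpha(f)$; by contractivity of $E$ this persists on the completion, so $a\geq0$ and $E(a)=0$ give $P_iL_\pi(a)P_i=0$, hence $L_\pi(a)^{1/2}P_i=0$ for every $i$, hence $L_\pi(a)=0$, and taking the supremum over all integrated left regular representations yields $a=0$.
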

\begin{proof}
The proof of most of the properties is routine, except perhaps for faithfulness, for which the reader is referred to \cite[Lemma~1.2.1]{Sierakowski}. We check that the image of $E$ is contained within $C(X)$. Given $f\in\Gamma_c(G,r^*\AA)$, write $f=\sum_{g\in G}f_gU_g$. By \cite[Proposition~4.38]{Goehle}, there is an isomorphism $\iota:C_c(G\ltimes X)\rightarrow\Gamma_c(G,r^*\AA)$, with inverse $j$ given by $j(f)(g,x)=f_g(gx)$. Since $G$ is \'etale and Hausdorff, $G^{(0)}$ is clopen in $G$. 
This implies that $f|_{G^{(0)}}\in\Gamma_c(G,r^*\AA)$, so that $j(f|_{G^{(0)}})\in C_c(G\ltimes X)$. In fact, because $(f|_{G^{(0)}})_g=0_{r_X^{-1}(r(g))}$ unless $g\in G^{(0)}$, observe that $j(f|_{G^{(0)}})$ is supported on $G^{(0)}\ast X$, which is homeomorphic to $X$ via $\Phi(x)=(r_X(x),x)$. Thus, under the identification of $G^{(0)}\ast X$ with $X$, we see that $j(f|_{G^{(0)}})|_{G^{(0)}\ast X}\in C(X)$. On the other hand, for each $x\in X$,
\[E(f)(x)=f_{r_X(x)}(x)=j(f|_{G^{(0)}})(r_X(x), x)=j(f|_{G^{(0)}})(\Phi(x)),\]
so $E(f)=j(f|_{G^{(0)}})\circ\Phi$, which shows that $E(f)\in C(X)$ too. 
\end{proof}
We remark for later that an almost identical argument works to show that for any clopen bisection $B\subset G$, the sum $\sum_{g\in B}f_g$ defines an element of $C(X)$. The only modification that needs to be made is in the final equality, where we must precompose $j(f|_B)$ with the homeomorphism $(r|_{B\ast X})^{-1}$ arising from $r:G\ltimes X\rightarrow X$, to obtain 
\[\sum_{g\in B}f_g=j(f|_B)\circ (r|_{B\ast X})^{-1}\circ \Phi\in C(X).\]
\begin{lemma}[{c.f. \cite{Phillips}~Lemma~7.8}]\label{Phillips1}
Let $G$ be a locally compact Hausdorff \'etale groupoid acting freely and continuously on a compact Hausdorff space. Let $a\in\Gamma_c(G,r^*\AA)\subset C(X)\rtimes G$, and let $\epsilon>0$. Then there exists $f\in C(X)$ such that $0\leq f(x)\leq 1$ for every $x\in X$, $fa^*af\in C(X)$, and $\|fa^*af\|\geq\|E(a^*a)\|-\epsilon$. 
\end{lemma}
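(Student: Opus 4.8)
The plan is to set $b=a^*a$, which again lies in $\Gamma_c(G,r^*\AA)$ and is positive, and to produce $f$ by compressing $b$ with a bump function supported near a point where $E(a^*a)$ attains its maximum. Write $b=\sum_{g\in G}b_gU_g$, so that $E(b)=\sum_{u\in G^{(0)}}b_u$ by Lemma~\ref{gpoidconditionalexp}. Since $b\ge 0$ we have $E(b)\in C(X)_+$, so we may fix $x_0\in X$ with $E(b)(x_0)=\|E(b)\|_\infty$. Using the multiplication rule recorded before Lemma~\ref{gpoidconditionalexp}, a direct fibrewise computation shows that for $f\in C(X)$ (viewed as a section supported on $G^{(0)}$) one has
\[(fbf)_g(x)=f(x)\,b_g(x)\,f(g^{-1}x),\qquad x\in r_X^{-1}(r(g)),\]
and in particular the diagonal ($g\in G^{(0)}$) part of $fbf$ equals $f^2E(b)$. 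Thus if we can choose $f$ so that every off-diagonal coefficient $(fbf)_g$ with $g\notin G^{(0)}$ vanishes, then $fbf=f^2E(b)\in C(X)$, which is the structural half of the claim.

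The heart of the argument is this vanishing, and here freeness enters. Because $G$ is \'etale and Hausdorff, $G^{(0)}$ is clopen in $G$, so $K\coloneqq\supp(b)\setminus G^{(0)}$ is compact and contains no units. I would then consider the compact set $K\ltimes X=(K\times X)\cap(G\ast X)\subset G\ltimes X$ together with the continuous map $\Psi\colon G\ltimes X\to X\times X$, $\Psi(g,x)=(gx,x)$, coming from the range and source of the transformation groupoid. Freeness of the action forces $\Psi(K\ltimes X)$ to be disjoint from the diagonal $\Delta_X$: if $gx=x$ for some $(g,x)\in K\ltimes X$ then $g=r_X(x)\in G^{(0)}$, contradicting $g\in K$. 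Since $\Psi(K\ltimes X)$ is compact, hence closed, and $(x_0,x_0)\in\Delta_X$ lies in its complement, there is an open neighbourhood $U$ of $x_0$ with $(U\times U)\cap\Psi(K\ltimes X)=\emptyset$. Choosing $f\in C(X)$ by Urysohn with $0\le f\le 1$, $f(x_0)=1$ and $\supp(f)\subset U$, we get for each $g\in K$ and each $x\in\supp(f)$ that $(x,g^{-1}x)=\Psi(g,g^{-1}x)\in\Psi(K\ltimes X)$, whence $g^{-1}x\notin U$ and so $f(g^{-1}x)=0$; thus $(fbf)_g=0$ for all $g\notin G^{(0)}$ and $fbf=f^2E(b)\in C(X)$.

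Finally, for the norm estimate I would use that the inclusion $C(X)\hookrightarrow C(X)\rtimes_rG$ is isometric (being an injective $*$-homomorphism, consistent with the faithful expectation $E$ of Lemma~\ref{gpoidconditionalexp}). Then
\[\|fa^*af\|=\|f^2E(b)\|_\infty\ge f(x_0)^2E(b)(x_0)=\|E(a^*a)\|\ge\|E(a^*a)\|-\epsilon,\]
as required; in fact freeness lets us dispense with $\epsilon$ entirely in this compactly supported case. The only genuinely delicate step is the separation above: one must pass from the pointwise statement of freeness to a \emph{single} neighbourhood of $x_0$ that is moved off itself by every element of the support of $b$ simultaneously, which is exactly what the compactness of $K\ltimes X$ and of its image under $\Psi$ provides.
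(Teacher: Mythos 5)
Your proposal is correct, but it reaches the conclusion by a genuinely different route from the paper. The paper's proof spends most of its length on an iterative construction: it covers the compact set $K^{-1}K$ by finitely many open bisections, then builds nested open sets $V_1\supset\cdots\supset V_N$ and $W_1\supset\cdots\supset W_N$ to produce a nonempty open $W$ inside $U=\{E(b)>\|E(b)\|-\epsilon\}$ whose translates $\{gW\mid g\in K\}$ are pairwise disjoint, and then picks $x_0\in W$. You replace all of this by a single compactness/Hausdorff separation argument: the image of the compact set $K\ltimes X$ (with $K=\supp(b)\setminus G^{(0)}$, compact because $G^{(0)}$ is clopen) under the continuous map $(g,x)\mapsto(gx,x)$ is a closed subset of $X\times X$ which, by freeness, misses the diagonal, so a basic neighbourhood of $(x_0,x_0)$ of the form $U\times U$ avoids it; this is exactly the statement $U\cap K\cdot U=\emptyset$, which is all the paper's elaborate construction is actually used for in the final computation (the full pairwise disjointness of $\{gW\}$ is never needed, only $r(g)W\cap gW=\emptyset$ for $g\in K\setminus G^{(0)}$). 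Your fibrewise computation $(fbf)_g=f\,b_g\,\alpha_g(f)$ and the identification of the diagonal part with $f^2E(b)$ agree with the paper's. A further payoff of your version is that, by choosing $x_0$ at a maximum of $E(a^*a)$ (possible since $E(a^*a)\in C(X)_+$ and $X$ is compact), you get the stronger estimate $\|fa^*af\|\geq\|E(a^*a)\|$ with no $\epsilon$ at all. The only small point worth making explicit is that $K\ltimes X=(K\times X)\cap(G\ast X)$ is compact because $G\ast X$ is closed in $G\times X$ (it is the preimage of the diagonal of the Hausdorff space $G^{(0)}$ under the continuous map $(g,x)\mapsto(s(g),r_X(x))$), and that an injective $*$-homomorphism is isometric, so the crossed-product norm of $f^2E(b)$ is its supremum norm; both are routine.
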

\begin{proof}
Write $b=a^*a$. If $\|E(b)\|\leq\epsilon$, then we can simply take $f=0$, so suppose there exists $x\in X$ such that $|E(b)(x)|>\epsilon$. 
Since $a$ is a compactly supported section, and the product of compact subsets of $G$ is always compact,
there exists a compact subset $K\subset G$, and $b_g\in C(r_X^{-1}(r(g)))$ for each $g\in K$, such that $b=\sum_{g\in K}b_gU_g$. 
Since $b$ is positive, $E(b)$ is too.
Let
\[U=\{x\in X\mid E(b)(x)>\|E(b)\|-\epsilon\},\]
which is a nonempty open subset of $X$. We wish to find a nonempty open set $W\subset U$ such that the sets in the collection $\{gW\mid g\in K\}$ are pairwise disjoint. 

Notice that for $g,h\in K$, $gW\cap hW\neq\emptyset$ if and only if there exist $w_1,w_2\in W$ such that $gw_1=hw_2$ if and only if $w_1=g^{-1}hw_2$ if and only if $W\cap K^{-1}KW\neq\emptyset$, so it is equivalent to ask that whenever $g,h\in K$ are such that $g^{-1}h\notin G^{(0)}$, we have $W\cap g^{-1}hW=\emptyset$. 

Since $G$ is \'etale, the topology on $G$ has a base of open bisections $\{B_i\}_{i\in I}$. Therefore, $\{B_i\}_{i\in I}$ is an open cover of $K^{-1}K$, which is the product of compact subsets in an \'etale groupoid, and so is compact itself by \cite[Lemma~5.2]{GWY}. 
So, there exists a finite subcover $\{B_1,\ldots,B_N\}$ of $K^{-1}K$. In fact, since $G$ is \'etale and Hausdorff, $G^{(0)}$ is a clopen bisection. Therefore, by putting $B_0=G^{(0)}$ and $B_k=B_k\setminus G^{(0)}$ for each $k\in\{1,\ldots,N\}$, then redefining $N$ and relabelling, we may assume that $B_1=G^{(0)}$ and that $B_i\cap G^{(0)}=\emptyset$ for each $i\in\{2,\ldots,N\}$. We will construct an open $W\subset U$ such that $W\cap \bigcup_{k=2}^NB_kW=\emptyset$. 

Now, we wish to find a subset $V\subset U$ such that, for each $k\in\{1,\ldots,N\}$, either $r_X(V)\subset s(B_k)$, or $r_X(V)\cap s(B_k)=\emptyset$. In other words, for each $k$, either every element of $V$ will be acted upon by $B_k$, or none of them will be. 
We will construct a nested sequence of nonempty open sets $U\supset V_1\supset V_2\supset\cdots\supset V_N=V$ so that, for each $k\in\{1,\ldots,N\}$, and each $1\leq j\leq k$, either $r_X(V_k)\subset s(B_j)$, or $r_X(V_k)\cap s(B_j)=\emptyset$. Observe that since $B_1=G^{(0)}$, the choice $V_1=U$ works.

Now, suppose we have constructed open sets $U=V_1\supset V_2\supset\cdots\supset V_{k-1}$ as above for some $2\leq k\leq N$, and proceed in cases as follows. 
\begin{enumerate}[label=Case \arabic*:]
\item If $r_X(V_{k-1})\cap s(B_k)=\emptyset$, then set $V_k=V_{k-1}$.

\item If $r_X(V_{k-1})\cap s(B_k)\neq\emptyset$, then set $V_k=V_{k-1}\cap r_X^{-1}(s(B_k))$. Observe that $V_k$ is open because $r_X^{-1}(s(B_k))$ is open, since $B_k$ is open, $s$ is an open map, and $r_X$ is continuous.
\end{enumerate}

Choose any $x\in V=V_N$. For each $k\in\{1,\ldots,N\}$, since $B_k$ is a bisection, there is at most one element $g\in B_k$ such that $s(g)=r_X(x)$, so the set $\bigcup_{k=1}^NB_kx\subset X$ is finite with cardinality at most $N$. 
Since the action is free, and since $B_k\cap G^{(0)}=\emptyset$ whenever $2\leq k\leq N$, for each $g\in \bigcup_{k=2}^NB_kr_X(x)$ we have $gx\neq x$. Since $X$ is Hausdorff, for each such $gx=y$, we can find disjoint open subsets $x\in V_y$ and $y\in Z_y$. Let $V'=V\cap\left(\bigcap_{y\in \bigcup_{k=2}^NB_kx}V_y\right)$, which is nonempty since $x\in V'$, and is open in $X$ because it is the intersection of finitely many open sets. Observe also that $V'\cap Z_y=\emptyset$ for each $y\in \bigcup_{k=2}^NB_kx$. 

We now wish to find a second finite sequence of nested open subsets $V'\supset W_1\supset W_2\supset\cdots\supset W_N\ni x$, such that, for each $k\in\{2,\ldots,N\}$, we have $W_k\cap gW_k=\emptyset$ for every $g\in\bigcup_{j=2}^kB_j$. Then $W=W_N$ will be such that $gW\cap hW=\emptyset$ for all $g\neq h\in K$, as we wanted. Start with $W_1=V'$, and proceed inductively as follows.

Suppose that we have constructed $V'=W_1\supset\cdots\supset W_{k-1}\ni x$ as above for some $2\leq k\leq N$, and construct $W_k$ by proceeding in cases as follows. First, observe that since $W_{k-1}\subset V$, either $r_X(W_{k-1})\subset s(B_k)$, or $r_X(W_{k-1})\cap s(B_k)=\emptyset$. 
\begin{enumerate}[label=Case \arabic*:]
\item Suppose $r_X(W_{k-1})\cap s(B_k)=\emptyset$. This means that $B_kW_{k-1}=\emptyset$, so we have $gW_{k-1}=\emptyset$ for each $g\in B_k$. We assumed that for each $g\in\bigcup_{j=2}^{k-1}B_j$ we have $W_{k-1}\cap gW_{k-1}=\emptyset$, and so for each $g\in\bigcup_{j=2}^k B_j$ we still have $W_{k-1}\cap gW_{k-1}=\emptyset$. Thus, we see that the choice $W_k=W_{k-1}$ is suitable. 

\item Suppose $r_X(W_{k-1})\subset s(B_k)$. Then, since $x\in W_{k-1}$, there exists $g\in B_k$ such that $s(g)=r_X(x)$. Write $y=gx$. Since $B_k$ is open in $G$, and since $G$ is \'etale, we see by Lemma~\ref{actiontopology} that $B_kW_{k-1}$ is an open subset of $X$. Thus, $Z_y$ and $B_kW_{k-1}$ are open subsets of $X$ which both contain $y$, so that $Z_y\cap B_kW_{k-1}$ is a nonempty open set in $X$. Put $W_k=B_k^{-1}(Z_y\cap B_kW_{k-1})\subset W_{k-1}$. Observe that $x\in W_k$, and $W_k$ is open by Lemma~\ref{actiontopology}, because $B_k^{-1}$ is open in $X$. All we have to check is that for each $g\in B_k$ we have $W_k\cap gW_k=\emptyset$ (note that this automatically holds for $g\in\bigcup_{j=2}^{k-1}B_j$, because $W_k\subset W_{k-1}$). Indeed, $gW_k\subset B_kW_k\subset Z_y$, which is disjoint from $V'\supset W_k$, as required. 
\end{enumerate}

So, construct an open set $W\subset U$ such that the collection $\{gW\mid g\in K\}$ is pairwise disjoint. Note in particular that, by construction of $K$, $r(K)\subset K$, so that $r(g)W$ and $gW$ are disjoint for each $g\in K$. Choose $x_0\in W$, and let $f\in C(X)$ satisfy $0\leq f(x)\leq 1$ for every $x\in X$, $\textnormal{supp}(f)\subset W$, and $f(x_0)=1$. Then
\[fbf=\sum_{g\in K}fb_gU_gf=\sum_{g\in K}fb_g\alpha_g(f)U_g.\]
Since $\textnormal{supp}(f)\subset W$ and $\textnormal{supp}(\alpha_g(f))\subset gW$, observe that $fb_g\alpha_g(f)=b_gf\alpha_g(f)$ is supported inside $W\cap gW$. Since $gW\subset r_X^{-1}(r(g))$, we have $W\cap gW=r(g)(W\cap gW)=r(g)W\cap gW$, and we thus see that $fb_g\alpha_g(f)$ is supported within $r(g)W\cap gW$.
So, for $g\in K\setminus G^{(0)}$, our construction of $W$ provides $fb_g\alpha_g(f)=0$. 
Thus, we have
\[fbf=\sum_{u\in K\cap G^{(0)}} fb_u\alpha_u(f)U_u=\sum_{u\in K\cap G^{(0)}}fb_uf|_{r_X^{-1}(u)},\]
where we have identified the central expression with a complex-valued function on $X$.  
Since $\textnormal{supp}(b_u)\subset r_X^{-1}(u)$, we see that $fb_uf|_{r_X^{-1}(u)}=fb_uf$, so that
\[fbf=f\left(\sum_{u\in G^{(0)}}b_u\right)f=fE(b)f\in C(X).\]
In addition, we have
\[\|fbf\|\geq |fbf(x_0)|=|f(x_0)b_{r_X(x_0)}(x_0)f(x_0)|=|b_{r_X(x_0)}(x_0)|=|E(b)(x_0)|>\|E(b)\|-\epsilon,\]
where the final inequality follows from the fact that $x_0\in U$. 
\end{proof}

\begin{lemma}[c.f. {\cite[Lemma~7.9]{Phillips}}]\label{Phillips2}
Let $G\curvearrowright X$ be a free continuous action of a locally compact Hausdorff \'etale groupoid on a compact Hausdorff space. Let $B\subset C(X)\rtimes_r G$ be a unital subalgebra such that
\begin{enumerate}[label=(\roman*)]
\item $C(X)\subset B$; and
\item $B\cap\Gamma_c(G,r^*\AA)$ is dense in $B$.
\end{enumerate}
Let $a\in B_+\setminus\{0\}$. Then there exists $b\in C(X)_+\setminus\{0\}$ such that $b\precsim_B a$, where $\precsim_B$ denotes the relation of Cuntz subequivalence over $B$.
\end{lemma}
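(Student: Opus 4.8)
The plan is to imitate the group-case argument of \cite[Lemma~7.9]{Phillips}, using \lemref{Phillips1} as the key cut-down device and the faithfulness of the conditional expectation $E$ from \lemref{gpoidconditionalexp} to guarantee that the element we produce is nonzero. After rescaling we may assume $\|a\|=1$, so that $\|a^{1/2}\|=1$ as well. Since $B$ is a unital (norm-closed) $C^*$-subalgebra with $C(X)\subset B$, both $a^{1/2}$ and all the elements constructed below lie in $B$; in particular every Cuntz comparison we make will automatically be over $B$, and the witnessing sequences can be taken in $B$ because $B$ is unital.

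First I would approximate. As $B\cap\Gamma_c(G,r^*\AA)$ is dense in $B$ and $a^{1/2}\in B$, choose a self-adjoint $e\in B\cap\Gamma_c(G,r^*\AA)$ with $\|e-a^{1/2}\|<\delta$ (replacing $e$ by $(e+e^*)/2$ if necessary, which stays in the $*$-closed set $B\cap\Gamma_c(G,r^*\AA)$ and only improves the estimate, since $a^{1/2}$ is self-adjoint). Then $e^2=e^*e\in B\cap\Gamma_c(G,r^*\AA)$ is positive and, for $\delta$ small, $\|e^2-a\|<\eta$ with $\eta\to 0$ as $\delta\to0$. Applying \lemref{Phillips1} to $e$ produces $f\in C(X)$ with $0\le f\le 1$ such that $b\coloneqq fe^2f=fe^*ef\in C(X)_+$ and $\|b\|\ge\|E(e^2)\|-\epsilon'$. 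Since $E$ is faithful and $a\ge0$ is nonzero, $E(a)$ is a nonzero positive element of $C(X)$; set $m\coloneqq\|E(a)\|>0$. Because $\|E(e^2)-E(a)\|\le\|e^2-a\|<\eta$, we obtain $\|b\|>m-\eta-\epsilon'$, which is strictly positive once $\delta$ and $\epsilon'$ are chosen small.

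The heart of the argument is the Cuntz comparison, and this is where I expect the real care to be needed. Write $b=(ef)^*(ef)$ with $ef\in B$, so that $(ef)(ef)^*=ef^2e$. The natural but unhelpful move is to bound $ef^2e\le e^2$ and try to transfer $\precsim e^2$ to $\precsim a$ across $\|e^2-a\|<\eta$; this fails because the elements implementing $\precsim e^2$ have uncontrolled norm. Instead I would compare $ef^2e$ directly with $w\coloneqq a^{1/2}f^2a^{1/2}\in B_+$. Since $0\le f^2\le1$ we have $w\le a^{1/2}\cdot 1\cdot a^{1/2}=a$, hence $w\precsim_B a$ by the standard fact that $0\le u\le v$ implies $u\precsim v$ (see \cite{Rordam,Phillips}). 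Moreover a routine estimate gives $\|ef^2e-w\|\le(\|e\|+\|a^{1/2}\|)\|e-a^{1/2}\|<\eta'$ for some $\eta'$ comparable to $\delta$. The perturbation lemma $\|p-q\|<\eta'\Rightarrow(p-\eta')_+\precsim q$ then yields $(ef^2e-\eta')_+\precsim_B w\precsim_B a$, while the equivalence $(x^*x-\eta')_+\sim(xx^*-\eta')_+$ applied to $x=ef$ gives $(b-\eta')_+\sim_B(ef^2e-\eta')_+$. Combining these, $(b-\eta')_+\precsim_B a$.

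To finish, I would fix the bookkeeping: choose $\delta$ and $\epsilon'$ small enough that $m-\eta-\epsilon'>\eta'$, which is possible since $\eta,\eta',\epsilon'$ all shrink with the approximation while $m$ is fixed. Then $\|b\|>\eta'$, so $b'\coloneqq(b-\eta')_+$ is a nonzero positive element of $C(X)$ (functional calculus keeps it inside the commutative algebra $C(X)$), and $b'\precsim_B a$, as required. The main obstacle, as indicated, is precisely this Cuntz-comparison bookkeeping: one must transfer subequivalence across the norm approximation $ef^2e\approx a^{1/2}f^2a^{1/2}$ and simultaneously keep the parameters aligned so that the final cut-down is nonzero (guaranteed by the faithfulness of $E$) yet still lands below $a$. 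The decisive trick is to compare against $w\le a$ rather than against $e^2$, which converts the delicate transfer into two clean, constant-free steps.
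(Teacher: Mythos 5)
Your proposal is correct and follows essentially the same route as the paper: approximate $a^{1/2}$ by an element of $B\cap\Gamma_c(G,r^*\AA)$, apply \lemref{Phillips1} to cut down into $C(X)$, use faithfulness of $E$ to keep the cut-down away from zero, and finish with the $(x^*x-\epsilon)_+\sim(xx^*-\epsilon)_+$ flip together with standard Cuntz perturbation/order lemmas. The only (immaterial) difference is the order of the last two comparisons: you perturb $ef^2e$ to $a^{1/2}f^2a^{1/2}\leq a$, whereas the paper uses $cf^2c^*\leq cc^*$ first and then the perturbation $\|cc^*-a\|<3\epsilon$, so the two arguments invoke the same lemmas in swapped order.
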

\begin{proof}
Without loss of generality, we may assume $\|a\|\leq1$. Since the conditional expectation $E: C(X)\rtimes_r G\rightarrow C(X)$ is faithful, $E(a)\in C(X)$ is a nonzero positive element. Let $\epsilon=\frac{1}{8}\|E(a)\|$ and note that $\epsilon\leq1$, since $\|E(a)\|\leq\|a\|$. Since $B\cap\Gamma_c(G,r^*\AA)$ is dense in $B$, choose $c\in B\cap\Gamma_c(G,r^*\AA)$ such that $\|c-a^{1/2}\|<\epsilon$. Without loss of generality, we may choose $c$ such that $\|c\|\leq1$. Notice that
\begin{align*}
\epsilon^2>\|c-a^{1/2}\|^2&=\|(c-a^{1/2})(c-a^{1/2})^*\|\\
&=\|cc^*+a-ca^{1/2}-c^*a^{1/2}\|\\
&\geq\|cc^*-a\|-\|2a-ca^{1/2}-c^*a^{1/2}\|\\
&\geq\|cc^*-a\|-\|a^{1/2}\|\|2a^{1/2}-c-c^*\|,
\end{align*}
so that
\begin{align*}
\|cc^*-a\|&<\epsilon^2+\|a^{1/2}\|\|a^{1/2}-c\| + \|a^{1/2}\|\|(a^{1/2}-c)^*\|<3\epsilon.
\end{align*}
Similarly, $\|c^*c-a\|<3\epsilon$. Since $E$ is norm decreasing, we see that $\|E(c^*c-a)\|<3\epsilon$, which implies that $\|E(a)\|<\|E(c^*c)\|+3\epsilon$.
Applying Lemma~\ref{Phillips1} with $c$ in place of $a$, and with the $\epsilon$ defined above, yields $f\in C(X)$ with $0\leq f\leq 1$, which satisfies
\[\|fc^*cf\|\geq\|E(c^*c)\|-\epsilon>\|E(a)\|-4\epsilon=8\epsilon-4\epsilon=4\epsilon.\]
Therefore, $(fc^*cf-3\epsilon)_+$ is a nonzero positive element of $C(X)$. To conclude the proof, we apply \cite[Lemma~1.4(6)]{Phillips} at the first step, \cite[Lemma~1.7]{Phillips} and the fact that $cf^2c^*\leq cc^*$ because $0\leq f\leq1$ at the second step, and \cite[Lemma~1.4(10)]{Phillips} and $\|cc^*-a\|<3\epsilon$ at the final step, to see that
\[(fc^*cf-3\epsilon)_+\sim_B(cf^2c^*-3\epsilon)_+\precsim_B(cc^*-3\epsilon)_+\precsim_B a.\qedhere\]\label{endcondexp}
\end{proof}

\section{Tilings and their dynamics}
\label{tilings} 

In this section, we define tilings of $\R^d$ and introduce the properties on tilings that are used throughout this paper. These assumptions lead to a compact space of tilings with a free and minimal $d$-dimensional flow, giving rise to a dynamical system of tilings. Following Kellendonk \cite{Kel}, we consider a closed subset of this dynamical system that lends itself to defining a rather tractable groupoid and its corresponding $C^*$-algebra. A more complete exposition of the material presented here can be found in \cite{Kel, KP, Whit}.

We begin by describing the basic building blocks of the tilings we consider.

\begin{definition}
A \emph{prototile} $p$ is a labelled subset of $\R^d$ that is homeomorphic to the closed unit ball. A prototile set is a finite set of prototiles, typically denoted by $\PP:=\{p_1, \ldots, p_n\}$.
\end{definition}

Note that prototiles are labelled subsets of $\R^d$. So we may have two identical subsets with different labels, making them different prototiles. We will abuse notation and let the symbol $p$ denote both a subset of $\R^d$ and a labelled subset of $\R^d$.

\begin{definition}
A \textit{tiling} $T$ with prototile set $\PP$ is a set $\{t_1,t_2,\ldots\}$ of subsets of $\R^d$, called \textit{tiles}, such that
\begin{enumerate}
\item for each $i\in\{1,2,\ldots\}$, there exists $x_i\in\R^d$ and $p_i\in\PP$ such that $t_i=p_i+x_i$;
\item $\bigcup_{i=1}^\infty t_i =\R^d$; and
\item $\textnormal{int}(t_i) \cap \textnormal{int}(t_j) = \varnothing$ for $i \neq j$.
\end{enumerate}
\end{definition}

A \emph{patch} $P$ in a tiling $T$ is a finite connected subset of tiles. Given a tiling $T$, $x \in \R^d$ and $r>0$, we define the patch $T\sqcap B_r(x)$ by
\[
T\sqcap B_r(x)\coloneqq\{t\in T\mid t \cap B_r(x)\neq\emptyset\}.
\]
These types of patches will be useful for putting a metric on tilings, among other things. Given a tiling $T$ and $x \in \R^d$, we define $T+x:=\{t+x \mid t \in T\}$ and use this to define the set of tilings $T+\R^d :=\{T+x \mid x \in \R^d\}$.

\begin{definition}
Suppose $T$ is a tiling. Given $T_1,T_2 \in T+\R^d$ and $0<\varepsilon<1$, we say that $T_1$ and $T_2$ are $\varepsilon$-close if there exist $x_1, x_2\in\R^d$ such that $\|x_1\|,\|x_2\|\leq\varepsilon$, and such that
\[
(T_1+x_1)\sqcap B_{\varepsilon^{-1}}(0)=(T_2+x_2)\sqcap B_{\varepsilon^{-1}}(0).
\]
Define $d(T_1,T_2)$ to be the infimum of the set of $\varepsilon$ which satisfy this hypothesis. If no such $\varepsilon$ exists, set $d(T_1,T_2)=1$.
\end{definition}

The idea of the tiling metric is to extend the usual product metric in symbolic dynamics to the continuous situation of tilings. So two tilings are close if they agree on a large ball about the origin up to a small translation.

\begin{definition}
Suppose $T$ is a tiling. The \emph{continuous hull} $\Omega_T$ of $T$ is the completion of $T+\R^d$ in the tiling metric.
\end{definition}

We now introduce several properties of a tiling that we use throughout the paper.

\begin{definition}\label{tiling properties}
Suppose $T$ is a tiling.
\begin{enumerate}
\item\label{FLC} $T$ is said to have \emph{finite local complexity} (FLC) if, for every $R>0$, the set $\{T \sqcap B_R(x) \mid x \in \R^d\}/\R^d$ is finite.
\item\label{nonperiodic} $T$ is said to be \emph{nonperiodic} if $T+x=T$ implies $x=0$. We say $T$ is {\em aperiodic} if every tiling in the continuous hull $\Omega_T$ is nonperiodic.
\item\label{repetitive} $T$ is said to be \emph{repetitive} if, for every patch $P \subset T$, there exists $R>0$ such that, for every $x\in\R^d$, a translate of $P$ appears in $T\sqcap B_R(x)$.
\end{enumerate}
\end{definition}

If $T$ has FLC, then $\Omega_T$ is a compact Hausdorff space. In \cite{Kel}, it is shown that $\Omega_T$ is a space whose points are tilings and that $\R^d$ acts continuously on $\Omega_T$. So we think of $(\Omega_T,\R^d)$ as a dynamical system. If $T$ is aperiodic, then the action of $\R^d$ on $\Omega_T$ is free and if $T$ is repetitive, the action is minimal. In this case, the continuous hull does not depend on the original tiling $T$ used to construct it, in the sense that the continuous hull of any other tiling in $\Omega_T$ will also be $\Omega_T$. Therefore, we begin to drop the ``$T$" from the notation $\Omega_T$, and simply denote the continuous hull by $\Omega$.

One of Kellendonk's inspired simplifications of tiling spaces is a quantisation of the continuous hull \cite[Section 2.1]{Kel}. Suppose $\Omega$ is the continuous hull of a tiling with prototile set $\PP$. For each prototile $p \in \PP$, distinguish a point in its interior and denote it by $x(p)$. We call $x(p)$ a \emph{puncture}. Now suppose $T$ is a tiling in $\Omega$. Since each tile $t\in T$ satisfies $t=p+y$ for some $y\in\R^d$ and $p\in P$, we extend the punctures to tiles by $x(t)=x(p)+y$. Thus, every tile in every tiling in $\Omega$ can be punctured. We use these punctures to restrict the possible alignments of tilings by forcing the origin to lie on a puncture.

\begin{definition}[{\cite[Section 2.1]{Kel}}]
Suppose $\Omega$ is the continuous hull of an FLC tiling. The \emph{discrete hull} $\Opunc$ of a tiling consists of all tilings $T \in \Omega$ for which the origin is a puncture of some tile $t\in T$. 
\end{definition}
We now begin to assume that each prototile $p \in \PP$ lies in $\R^d$ with its puncture on the origin. Suppose $T$ is a tiling in $\Opunc$. We denote the (unique) tile with puncture on the origin by $T(0)$. We remark that $\Opunc$ is a Cantor set, as shown in \cite{KP}. A neighbourhood base for the Cantor set topology is given by sets of the form
\[
U(P,t)\coloneqq \{ T \in \Opunc \mid P-x(t)\subset T\}.
\]

\section{Tiling groupoids and Kellendonk's $C^*$-algebra of a tiling}\label{efun}

We now describe Kellendonk's construction of an \'{e}tale groupoid associated to a tiling \cite{Kel}.

Suppose $\Opunc$ is the discrete hull of a repetitive and aperiodic tiling with FLC. An \'{e}tale principal groupoid is defined using translational equivalence on $\Opunc$:
\[
\Rpunc\coloneqq\{(T,T-x(t))\mid T\in\Opunc \textnormal{ and } t\in T\}.
\]
Then $\Rpunc$ is a groupoid with inverse and partially defined product
\[
(T,T')^{-1}=(T',T) \quad \text{ and } \quad (T, T')\cdot(T',T'')\coloneqq (T, T'').
\]
The unit space of $\Rpunc$  is $\Opunc$, and the source and range maps $s,r:\Rpunc\rightarrow \Opunc$ are defined by $s(T,T^\prime)\coloneqq T^\prime$ and $r(T,T^\prime) \coloneqq T$.

There is a metric on $\Rpunc$ defined by
\[
d_R\big((T_1, T_1-x(t_1)), (T_2, T_2-x(t_2))\big)=d(T_1, T_2) + \|x(t_1)-x(t_2)\|.
\]
An equivalent topology is given as follows. Let $P$ be a patch of tiles and $t,t' \in P$, so that both $P-x(t)$ and $P-x(t')$ are patches with puncture on the origin. Consider the set of tilings
\begin{equation}
\label{V-sets}
V(P,t,t')\coloneqq \{ \big(T-(x(t')-x(t)),T)\big) \mid P-x(t)\subset T\}
\end{equation}
and let
\begin{equation}
\label{V-base}
\VV \coloneqq \{ V(P,t,t') \mid P \text{ is a patch from a tiling in } \Opunc \text{ and } t,t' \in P\}.
\end{equation}
The collection $\VV$ is a neighbourhood base of compact open sets; see for example \cite[Lemma 2.5 and 2.6]{Whit}. Lemma 2.6 in \cite{Whit} shows that the range and source maps are local homeomorphisms satisfying $s(V(P,t,t'))=U(P,t)$ and $r(V(P,t,t'))=U(P,t')$. This implies that $\Rpunc$ is an \'{e}tale groupoid.

In \cite{{Kel}, {KP}, {Whit}}, a groupoid $C^*$-algebra $\Apunc$ is associated to $\Rpunc$. Rather than go into the specifics of this construction, we present a set of elements that span a dense subalgebra of $\Apunc$, and a faithful and nondegenerate representation of $\Apunc$ on a Hilbert space. The details of this specific construction can be found in \cite[Section 2.2]{Kel}.

To construct a dense spanning class in $\Apunc$ we use the neighbourhood base $\VV$ from \eqref{V-base} to construct partial isometries. We set $e(P, t, t^\prime)=\mathbf{1}_{V(P,t,t')}$ to be the indicator function of the clopen set $V(P,t,t')$. Explicitly, for a pair $(T',T)\in\Rpunc$ we have
\begin{equation}\label{e-function}
e(P,t,t^\prime)(T',T)=\begin{cases}
1&\hbox{if } P-x(t)\subset T\text{ and }  T'=T-(x(t')-x(t))\\
0&\hbox{otherwise.}
\end{cases}
\end{equation}

The collection
\begin{equation}\label{ez-collection}
\mathcal{E} \coloneqq \{ e(P,t,t^\prime)\mid P \text{ is a patch from } \Opunc \text{ and } t,t' \in P\}
\end{equation}
generates a dense subalgebra of $C_c(\Rpunc)$, and hence of $\Apunc$. The proof of this appears in \cite[Proposition 3.3]{Whit}. Moreover, these functions have the following properties.

\begin{prop}
Suppose $\Opunc$ is the discrete hull of a repetitive and aperiodic tiling with FLC, and $\EE$ is the collection in \eqref{ez-collection}. Then 
\begin{enumerate}
\item $e(P,t,t')^*=e(P,t',t)$; and
\item $e(P,t,t')\cdot e(P',t',t'')=e(P\cup P',t,t'')$ if $P$ and $P'$ agree where they intersect.
\end{enumerate}
\end{prop}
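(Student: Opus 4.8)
The plan is to treat both identities at the level of the defining sets $V(P,t,t')$ from \eqref{V-sets}, using that $e(P,t,t')=\mathbf{1}_{V(P,t,t')}$ is the indicator function of a compact open bisection of $\Rpunc$ (this is recorded in the discussion around \eqref{V-base}). Since $\Rpunc$ is principal and \'etale, the involution and convolution on $C_c(\Rpunc)$ are given on a pair $(T',T)\in\Rpunc$ by $f^*(T',T)=\overline{f\big((T',T)^{-1}\big)}$ and by summation over the intermediate tiling, so that each verification reduces to unwinding the conditions in \eqref{e-function} and bookkeeping the puncture translations. Because every function in sight is $\{0,1\}$-valued, complex conjugation plays no role, and the only real content is the combinatorics of patch inclusions and translations.

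For (1), I would compute $e(P,t,t')^*(T',T)=\overline{e(P,t,t')(T,T')}=e(P,t,t')(T,T')$, using $(T',T)^{-1}=(T,T')$. By \eqref{e-function} this equals $1$ exactly when $P-x(t)\subset T'$ and $T=T'-(x(t')-x(t))$. Rewriting the second condition as $T'=T-(x(t)-x(t'))$, and translating the inclusion $P-x(t)\subset T'$ by $-(x(t')-x(t))$ to turn it into $P-x(t')\subset T$, one sees these are precisely the two conditions defining $e(P,t',t)(T',T)$. Hence $V(P,t,t')^{-1}=V(P,t',t)$ and $e(P,t,t')^*=e(P,t',t)$.

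For (2), I would expand the convolution product as a sum over the intermediate tiling $T'$, evaluating $\big(e(P,t,t')\cdot e(P',t',t'')\big)(T'',T)$ as a sum of products of the form $e(\cdot)(\,\cdot\,,T')\,e(\cdot)(T',\,\cdot\,)$, with the shared tile $t'$ serving as the gluing unit (it appears as the source label of one factor and the range label of the other). The translation clauses in \eqref{e-function} determine $T'$ uniquely from $T$ --- this is where the bisection structure enters, each factor meeting a given source fibre in a single arrow --- so the sum collapses to a single $\{0,1\}$-valued term and no multiplicities arise. Composing the two translations yields the overall shift $T''=T-(x(t'')-x(t))$, matching the shape of $V(P\cup P',t,t'')$. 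The substantive step is the patch condition: one surviving inclusion is $P-x(t)\subset T$ directly, while the other, after translating through the shift relating $T'$ and $T$, becomes $P'-x(t)\subset T$, and their conjunction is exactly $(P\cup P')-x(t)\subset T$.

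The main obstacle, and the only place the hypothesis is used, is precisely this last combination of inclusions: it requires that $P$ and $P'$ agree on their overlap so that $P\cup P'$ is a genuine patch from a tiling and so that the single inclusion $(P\cup P')-x(t)\subset T$ is equivalent to the two separate inclusions holding simultaneously; without agreement the union need not embed into any tiling and the product would vanish. I would also be careful about the order of composition forced by the convolution convention, since the two orderings of the underlying bisections yield genuinely different sets and only one reproduces $V(P\cup P',t,t'')$; verifying at the outset that each $V(P,t,t')$ is a compact open bisection with $s(V(P,t,t'))=U(P,t)$ and $r(V(P,t,t'))=U(P,t')$ keeps this bookkeeping straight.
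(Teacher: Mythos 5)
The paper states this proposition without proof (only the density assertion is attributed to the literature), so there is no in-paper argument to compare against; judged on its own terms, your strategy of working with the compact open bisections $V(P,t,t')$ is the natural one, and your part (1) is correct: $e(P,t,t')^*(T',T)=e(P,t,t')(T,T')$ together with the translation bookkeeping gives $V(P,t,t')^{-1}=V(P,t',t)$, hence $e(P,t,t')^*=e(P,t',t)$.

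In part (2), however, the bookkeeping does not match the factor order you set up, and this is precisely the point you flagged but left unresolved. Under the convolution convention compatible with the paper's own representation formula \eqref{ind rep delta}, namely $(f\cdot g)(T'',T)=\sum_{T'}f(T'',T')\,g(T',T)$, the first written factor $e(P,t,t')$ is evaluated at $(T'',T')$ and contributes the inclusion $P-x(t)\subset T'$, which after substituting $T'=T-(x(t'')-x(t'))$ becomes $P-x(t)+x(t'')-x(t')\subset T$, not $P-x(t)\subset T$; the second factor contributes $P'-x(t')\subset T$, not $P'-x(t)\subset T$. The resulting set is not $V(P\cup P',t,t'')$ and can even be empty: with three adjacent tiles $A,B,C$, $P=\{A,B\}$, $P'=\{B,C\}$, $t=A$, $t'=B$, $t''=C$, the two inclusions force two distinct tiles onto the same position. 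The pair of inclusions you assert, $P-x(t)\subset T$ and $P'-x(t)\subset T$, is exactly what comes from gluing the bisections along $t'$, i.e.\ from the product $V(P',t',t'')\,V(P,t,t')$; indeed, since $s(V(P,t,t'))=U(P,t)$ and $r(V(P',t',t''))=U(P',t'')$, in the written order the composition is not matched along $t'$ at all, contrary to your "gluing unit" remark. So your computation silently swaps the factors relative to the convention it starts from. To close the gap you must fix the convolution convention explicitly (the paper never writes it, but \eqref{ind rep delta} determines it), carry the factor assignment through consistently, and then either prove the identity for the order that convention forces, or record that the displayed order is valid only for the opposite convention. The patch-agreement hypothesis then enters exactly as you describe, but only after this order issue is settled.
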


To define a representation of $C_c(\Rpunc)$ as bounded operators on a Hilbert space, we follow the development in \cite[Section~2.2]{Kel}. Suppose $T$ is any tiling in $\Opunc$ and let us identify the tile $t \in T$ with the tiling $T-x(t) \in \Opunc$. Then the induced representation from the unit space $\pi_T:C_c(\Rpunc) \to B(\ell^2(T))$ acts on elements of the spanning family $\EE$ as follows. For $q \in T$ with $P-x(t)\subset T-x(q)$, there exists a tile $q'$ with puncture $x(q')=x(q)+(x(t')-x(t))$ such that $P-x(t')\subset T-x(q')$, and we have
\begin{equation}\label{ind rep}
\big(\pi_T(e(P,t,t'))\xi\big)(q)=\xi(q') \quad \text{ for } \xi \in \ell^2(T). 
\end{equation}
Since $T$ is aperiodic and repetitive, the induced representation extends to a faithful nondegenerate representation of $\Apunc$ on $\ell^2(T)$, as shown in \cite[Section 4]{MamWhit}. Restricting \eqref{ind rep} to delta functions $\delta_{t''} \in \ell^2(T)$ gives
\begin{equation}\label{ind rep delta}
\pi_T(e(P,t,t'))\delta_{t''}=\begin{cases}\delta_{t'''}&\hbox{if }P-x(t)\subset T-x(t'')\textnormal{ and }x(t''')=x(t'')+(x(t')-x(t))\\0&\hbox{otherwise}.\end{cases}
\end{equation}

\section{Almost finiteness of tiling groupoids}\label{sec:tilinggroupoidaf}
We now present a viewpoint of tiling groupoids as transformation groupoids associated to a special sort of groupoid action, and prove that they are almost finite in the sense of \cite[Definition~6.2]{Matui} and satisfy the diameter condition appearing in Theorem~\ref{Suzuki Analogue}. We remark that we could simply apply Lemma~\ref{amplediameter} to obtain the diameter condition, but our direct proof will be instructive, and will allow us to choose our clopen towers to have a particularly nice form.

Given a free action of $\R^d$ on a compact metrisable space $\Omega$, we will restrict the action to a special subset of $\Omega$ of the following type.
\begin{definition}[{\cite[Definition~2.1]{GMPS}}]\label{flatCantortransversal}
Let $d\in\N$. Let $\varphi$ be a free action of $\R^d$ on a compact, metrisable space $\Omega$. We call a closed subset $X\subset\Omega$ a \textit{flat Cantor transversal} if the following are satisfied. 
\begin{enumerate}[label=(\roman*)]
\item $X$ is homeomorphic to a Cantor set.
\item For any $x\in\Omega$, there exists $p\in\R^d$ such that $\varphi^p(x)\in X$. 
\item There exists a positive real number $M>0$ such that
\[C=\{\varphi^p(x)\mid x\in X, p\in B_M(0)\}\]
is open in $\Omega$, and
\[X\times B_M(0)\ni (x,p)\mapsto \varphi^p(x)\in C\]
is a homeomorphism.
\item For any $x\in X$ and $r>0$ there exists an open neighbourhood $U\subset X$ of $x$ such that $\{p\in B_r(0)\mid \varphi^p(x)\in X\}=\{p\in B_r(0)\mid \varphi^p(y)\in X\}$ for all $y\in U$. 
\end{enumerate}
\end{definition}
The following appears as \cite[Remark~6.4]{Matui} (see also the proof of \cite[Lemma~6.3]{Matui}). 
\begin{lemma}[{\cite[Remark~6.4]{Matui}}]
Suppose that $\varphi:\R^d\curvearrowright\Omega$ is a free action on a compact metrisable space, and let $X\subset\Omega$ be a flat Cantor transversal. Construct an \'etale principal groupoid $G$ as in \cite{GMPS} as follows:
\[G=\{(x,\varphi^p(x))\mid p\in\R^d\textnormal{ and } x, \varphi^p(x)\in X\}.\]
Then $G$ is almost finite.
\end{lemma}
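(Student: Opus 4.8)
The plan is to verify the two conditions of Suzuki's definition of almost finiteness (\defref{gpoidAF}) for the groupoid $G$. First I would observe that since $X$ is a Cantor set, the unit space $G^{(0)}=X$ is totally disconnected, so $G$ is \emph{ample}; hence its topology has a base of compact open bisections and condition (i) --- that the compact open $G$-sets cover $G$ --- is automatic. All the work therefore goes into condition (ii): given a compact $C\subset G$ and $\epsilon>0$, produce a $(C,\epsilon)$-invariant elementary subgroupoid.

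The key is to exploit two features of the flat Cantor transversal. From \defref{flatCantortransversal}(iii), the cylinders $\varphi^{B_M(0)}(x)$ are pairwise disjoint across $x\in X$, so for each $x$ the \emph{return-vector set} $\Gamma_x:=\{p\in\R^d\mid\varphi^p(x)\in X\}$ is $2M$-uniformly discrete; together with property (ii) and compactness of $\Omega$ this makes $\{\Gamma_x\}_{x\in X}$ a uniformly Delone family in $\R^d$, while property (iv) makes $x\mapsto\Gamma_x\cap B_r(0)$ locally constant in the clopen topology of $X$ for every $r$. Under the identification $(x,\varphi^p(x))\leftrightarrow(x,p)$, groupoid composition combines the realized translation vectors additively, so if $R>0$ bounds the return vectors appearing in $C$, then composing with $C$ perturbs return vectors by vectors of norm at most $R$; consequently the condition $Ca\subset Su$ of \lemref{equivgpoidapproxinvce}(ii) reduces to a statement that a return vector sits far from the boundary of the shape of $S$. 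Next I would invoke amenability of $\R^d$ to fix a Følner cube $F\subset\R^d$ so invariant under translation by $B_R(0)$ that $|\partial_R F|/|F|$ is much smaller than $\epsilon$ times the uniform lower density of the $\Gamma_x$.

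With $F$ in hand the goal is to realize the elementary subgroupoid as a clopen tower decomposition of $X$. Using local constancy (property (iv)) I would partition $X$ into finitely many clopen bases $W_i$ so small that on each $W_i$ the returns landing in a fixed large region are constant, equal to a finite set $S_i=\Gamma\cap F$ realizing a Følner cube, so that the levels $\varphi^{p}(W_i)$ for $p\in S_i$ are clopen; the aim --- whose delicate exact execution is discussed below --- is that, after disjointification, the union of all levels over all towers partitions $X$, giving a Kakutani--Rokhlin box decomposition of the transversal by Følner cubes. The associated subgroupoid $K=\bigsqcup_i\bigsqcup_{j,k}V_{j,k}^{(i)}$ of \defref{fundamental}, moving each level to each other level within a tower, is then compact, contains $G^{(0)}$, and admits $\{W_i,S_i\}$ as a fundamental domain. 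Its $(C,\epsilon)$-invariance reduces, exactly as in the bijective computation in the proof of \thmref{Suzuki Analogue}, to $(C,\epsilon)$-invariance of each shape $S_i$ as a subset of $G$; and since $S_i$ consists of the transversal returns inside $F$ while $C$ shifts by at most $R$, this is precisely the Følner estimate $|CS_iu\setminus S_iu|\le|\partial_R F\cap\Gamma_u|<\epsilon|S_iu|$ secured by the choice of $F$.

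The main obstacle I anticipate is meeting simultaneously the two competing demands: an \emph{exact} clopen partition of $X$ (so that the levels form a genuine fundamental domain, not merely an approximate cover) together with \emph{uniform} Følner invariance of every shape across every unit, given that the returns $\Gamma_x$ do not form a lattice and so Følner cubes cannot tile $\R^d$ exactly along orbits. This is exactly where the groupoid quasitiling machinery of \thmref{groupoidOW} does its work: it supplies an $\epsilon$-disjoint, $(1-\epsilon)$-covering family of Følner-shaped tiles at each source bundle, and the small uncovered remainder --- of relative size at most $\epsilon$ at every unit --- can be swept into negligible trivial towers, so that the completed partition is exact while the invariance estimate degrades only by a controlled amount.
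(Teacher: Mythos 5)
Your overall strategy (Delone return vectors, F{\o}lner shapes, clopen Kakutani--Rokhlin towers) is the right flavour, but note first that the paper does not prove this lemma at all: it is quoted verbatim from Matui's Remark~6.4 (via the proof of his Lemma~6.3), whose content rests on the existence of \emph{exact} Kakutani--Rokhlin partitions with arbitrarily invariant shapes for free $\Z^d$/flat-transversal Cantor systems (Forrest-type constructions). So you are attempting a genuinely new self-contained argument, and it has a real gap precisely at the step you flag as ``delicate'': passing from the quasitiling output to an elementary subgroupoid. Theorem~\ref{groupoidOW} only produces an $\epsilon$-disjoint family of translated tiles that $(1-\epsilon)$-covers a compact subset of $G$ fibrewise; it does not produce a fundamental domain, i.e.\ an exact clopen partition of $X$ into tower levels. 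Your proposed fix --- ``sweep the uncovered remainder into negligible trivial towers'' --- is incompatible with Matui's definition of $(C,\epsilon)$-invariance, which is a \emph{pointwise} condition at every unit: if $x$ lies only in a remainder level, so that $Kx$ is a singleton (or very small), then $|CKx\setminus Kx|\geq |Cx|-1$, which is nowhere near $<\epsilon|Kx|$. Smallness of the remainder in each source fibre of the quasitiled set does not prevent some units of $X$ from being covered \emph{only} by remainder levels, and at those units the invariance of $K$ fails outright. The overlaps permitted by $\epsilon$-disjointness cause the same problem after disjointification: trimming levels to force disjointness and exact covering changes the shapes unit by unit, and nothing in your argument controls the invariance of the trimmed shapes uniformly over $X$.

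This is not a removable technicality: the paper itself explicitly leaves open the analogous question of whether the (approximate) castles witnessing almost finiteness of a groupoid action can be upgraded to castles that partition a totally disconnected $X$ (see the remark preceding Lemma~\ref{amplediameter}), and it is exactly to avoid this issue that the authors import the exact tower decompositions from Matui rather than derive them from their quasitiling theorem. A complete proof along your lines would need a substitute for that step --- e.g.\ an adaptation of Forrest's construction of exact clopen Rokhlin partitions for free Cantor $\Z^d$-systems to the flat-transversal setting, or a groupoid version of Kerr's Theorem~10.2 --- rather than an appeal to Theorem~\ref{groupoidOW}. (A smaller, fixable point: to even invoke Theorem~\ref{groupoidOW} for this groupoid you must construct the nested tiles $T_1\subset\cdots\subset T_n$ with the volume and boundary conditions; the paper does this only for $\Rpunc$ in Lemma~\ref{tilingTset}, though the same Delone volume estimates would work here.)
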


Notice that when we consider the free action of $\R^d$ on the continuous hull of an aperiodic and repetitive tiling with FLC, and set $X=\Opunc$, the groupoid $G$ constructed in the lemma above is isomorphic to $\Rpunc$. So, in order to prove that $\Rpunc$ is almost finite, it suffices to prove the following lemma.
\begin{lemma}
Let $\Omega$ be the continuous hull of an aperiodic and repetitive tiling with FLC. Then $\Opunc\subset\Omega$ is a flat Cantor transversal. 
\end{lemma}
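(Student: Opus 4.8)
The plan is to verify the four conditions of Definition~\ref{flatCantortransversal} for $X=\Opunc$ and the translation action $\varphi^p(T)=T-p$ of $\R^d$ on $\Omega$, which is free because the tiling is aperiodic. The engine behind the whole argument is a uniform lower bound on the separation of punctures, which I would establish first. Since the prototile set $\PP$ is finite and each prototile $p$ is homeomorphic to the closed unit ball with puncture $x(p)$ in its interior, the quantity $\delta:=\min_{p\in\PP}\operatorname{dist}(x(p),\partial p)$ is strictly positive. As every tile $t$ in every tiling in $\Omega$ is a translate of some prototile, we obtain $B_\delta(x(t))\subseteq\operatorname{int}(t)$ for every such tile; since distinct tiles have disjoint interiors, any two distinct punctures are at distance at least $2\delta$.

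Conditions (i) and (ii) are immediate. For (i), $\Opunc$ is a Cantor set, as shown in \cite{KP}. For (ii), given $T\in\Omega$ the origin lies in some tile $t\in T$, and then $\varphi^{x(t)}(T)=T-x(t)\in\Opunc$, since the puncture $x(t)$ is carried to the origin.

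For condition (iii) I would take $M:=\delta$ and consider the map $\Psi\colon\Opunc\times B_M(0)\to\Omega$, $\Psi(T,p)=\varphi^p(T)$, whose image is $C$. This map is continuous because the action is. For injectivity, note that $\Psi(T,p)$ has a puncture at $-p\in B_M(0)$, and by the separation estimate at most one puncture of any tiling can lie in the open ball $B_M(0)$; hence $p$, and then $T=\varphi^{-p}(\Psi(T,p))$, is recovered from $\Psi(T,p)$ as minus its unique puncture in $B_M(0)$. In fact this identifies $C$ with the set of tilings having a puncture in $B_M(0)$. To see that $C$ is open, given $S_0=\Psi(T_0,p_0)$ with puncture $-p_0$ satisfying $\|p_0\|<M$, any $S$ close enough to $S_0$ agrees with it on a large ball about the origin up to a small translation, so $S$ has a puncture within $B_M(0)$ and hence lies in $C$. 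The same reasoning shows that the map sending $S\in C$ to its unique puncture in $B_M(0)$ is continuous, which yields continuity of $\Psi^{-1}$ and completes the proof that $\Psi$ is a homeomorphism onto the open set $C$.

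For condition (iv) I would observe that $\{p\in B_r(0)\mid\varphi^p(T)\in\Opunc\}$ is exactly the finite set of punctures of $T$ lying in $B_r(0)$. The key point is that for $T,T'\in\Opunc$ sufficiently close, the constraint that both have a puncture at the origin, together with the separation estimate, forces the small translation in the definition of the tiling metric to be trivial, so that $T$ and $T'$ agree \emph{exactly} on a large ball about the origin. Consequently one can choose a basic clopen neighbourhood $U=U(P,t)\subseteq\Opunc$ of $T$, with $P$ the patch of $T$ covering $\overline{B_r(0)}$, on which the punctures inside $B_r(0)$ are constant, giving the required locally constant return-time set. I expect condition (iii)---establishing that $C$ is open and that $\Psi$ is a homeomorphism, i.e.\ the local product (``box'') structure of the tiling space---to be the main obstacle, since it is there that the uniform puncture separation and the continuity of the nearest-puncture map must be combined; conditions (i), (ii) and (iv) are comparatively routine.
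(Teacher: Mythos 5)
Your proposal is correct and follows essentially the same route as the paper: verify the four conditions of Definition~\ref{flatCantortransversal}, with the uniform separation of punctures driving conditions (iii) and (iv), openness of $C$ via the tiling-metric estimate, and $U(P,t)$-neighbourhoods for the locally constant return-time sets. The only (harmless) deviations are that you derive uniform discreteness from the distance of each puncture to the boundary of its prototile rather than from the FLC/Delone property, and that you establish the homeomorphism in (iii) by proving continuity of the inverse through the ``unique puncture in $B_M(0)$'' map instead of showing the forward map is open on basic sets $U(P,t)\times B_r(x)$ as the paper does.
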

\begin{proof}
As shown in \cite{KP}, $\Opunc$ is homeomorphic to a Cantor set. Given $T\in\Omega$, observe that $\varphi^{-x(T(0))}(T)=T-x(T(0))\in\Opunc$, so (ii) is satisfied. 

Since $T$ has FLC, the set of punctures in any tiling in $\Omega$ forms a Delone set in $\R^d$, and in particular is uniformly discrete. Furthermore, all the patches in any tiling in $\Opunc$ already appear in the initial tiling that we chose, so we can choose the same constants which witness uniform discreteness and relative density in all of these Delone sets. Take $M=r/2$, where $r$ is the chosen constant witnessing uniform density, so that the distance between any two distinct punctures is smaller than $r$. Form $C$ as in Definition~\ref{flatCantortransversal}, and take a tiling $T\in C$. By definition, there exists a unique $p\in\R^d$ such that $\|p\|<M$ and $\varphi^{-p}(T)\in\Opunc$. Existence is clear by the definition of $C$, and uniqueness follows because if $x,y$ have $\|x\|,\|y\|<M$ and $T-x, T-y\in\Opunc$, then $\|x-y\|\leq\|x\|+\|y\|<2M=r$, which contradicts our choice of $r$ since $x$ and $y$ are puncture locations. Take $0<\epsilon<(M-\|p\|)/2$ to be small enough that $\epsilon^{-1}-\epsilon>M$. Then, when $d(T,T')<\epsilon$, we can find vectors $x,y\in\R^d$ with $\|x\|,\|y\|<\epsilon$ such that $(T-x)\sqcap B_{\epsilon^{-1}}(0)=(T'-y)\sqcap B_{\epsilon^{-1}}(0)$. Since $T-x$ has a puncture at $p-x$, with 
\[\|p-x\|\leq\|p\|+\|x\|<\|p\|+(M-\|p\|)/2<M<\epsilon^{-1},\]
we see that $T'-y$ also has a puncture at $p-x$, so that $T'$ has a puncture at $p-x+y$. Observe that
\[\|p-x+y\|\leq\|p\|+\|x\|+\|y\|\leq \|p\|+2\epsilon<\|p\|+2(M-\|p\|)/2=M,\]
and therefore $T'\in C$. This proves that $C$ is open. 

Since the action of $\R^d$ on $\Omega$ is continuous, the map defined in (iii) is clearly continuous, and is clearly surjective by definition of $C$. As in the proof above, our choice of $M$ guarantees that for each $T\in C$ there is a unique $T'\in\Opunc$ such that there exists $p\in B_M(0)$ with $\varphi^p(T')=T$, and therefore the map is injective. To conclude the proof that the map is a homeomorphism, we prove that it is open. It suffices to show that the images of subsets of the form $U(P,t)\times B_r(x)$ are open in $\Omega$. So, take $T=\varphi^p(T')$ for some $T'\in U(P,t)\subset\Opunc$ and $p\in B_r(x)\subset B_M(0)$. Choose $\epsilon<(r-\|p-x\|)/2$ small enough that $P\subset B_{\epsilon^{-1}-\epsilon-\|p\|}(x(t))$. 
Observe that $B_\epsilon(T)$ is a subset of the image of $U(P,t)\times B_r(x)$, which proves that the image is open. 

Finally, for property (iv), choose $T\in\Opunc$ and $r>0$. Let $P\subset T$ be a patch which contains $B_r(0)\subset T$, and set $U=U(P,t)$. Then whenever $T'\in U$, we see that $T$ and $T'$ agree on $B_r(0)$, and thus the set of punctures in $T$ which lie within $B_r(0)$ is the same subset of $\R^d$ as the set of punctures in $T'$ which lie within $B_r(0)$. This shows that
\[\{p\in B_r(0)\mid \varphi^p(T)\in\Opunc\}=\{p\in B_r(0)\mid \varphi^p(T')\in\Opunc\},\]
as required.
\end{proof}

\begin{lemma}\label{tilingalmostfinite}
Let $T$ be an aperiodic and repetitive tiling with FLC. Then the tiling groupoid $\Rpunc$ is almost finite. Furthermore, the subsets $F_j^{(i)}$  in the definition of the fundamental domain of the elementary subgroupoid can be chosen to be arbitrarily small in diameter.
\end{lemma}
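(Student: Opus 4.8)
The plan is to prove both assertions at once by directly constructing, for each compact $C\subset\Rpunc$ and each $\epsilon>0$, a $(C,\epsilon)$-invariant elementary subgroupoid of $\Rpunc$ whose fundamental domain pieces $F_j^{(i)}$ are basic clopen sets of the form $U(P,t)$ with $P$ containing a ball of radius $\epsilon^{-1}$ about the puncture $x(t)$. Almost finiteness itself already follows from the preceding results, since $\Opunc$ is a flat Cantor transversal (Definition~\ref{flatCantortransversal}) and hence the associated groupoid, which is isomorphic to $\Rpunc$, is almost finite; the point of a direct argument is to exhibit fundamental domains of small diameter and of a concrete geometric shape, as promised, rather than appealing to Lemma~\ref{amplediameter}.

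The construction proceeds by passing to the geometry of $\R^d$. A compact $C\subset\Rpunc$ only involves translations by vectors of norm at most some $R>0$. I would first produce an equivariant, locally determined decomposition of $\R^d$ into large ``boxes'': concretely, a relatively dense and uniformly discrete set $N(T)$ of distinguished punctures, chosen by a local rule (so that membership of $0$ in $N(T)$ depends only on $T\sqcap B_\rho(0)$ for a fixed $\rho$) and equivariantly (so that $N(T-x(t))=N(T)-x(t)$), together with the Voronoi cells of $N(T)$. Repetitivity and FLC are what make such a net available with separation and cell diameters lying in a controlled window $[\rho,R']$, with $\rho$ as large as we please. Declaring two tilings $K$-equivalent when one is obtained from the other by translating the origin to another puncture lying in the \emph{same} Voronoi cell then defines a subgroupoid $K$ of $\Rpunc$; equivariance guarantees this is a genuine equivalence relation realised by $K$, local determination guarantees that the resulting pieces are clopen, and FLC guarantees that there are only finitely many cell-patterns. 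Indexing these patterns by $i$ and the position of the origin-tile within its cell by $j$ yields exactly the data of a fundamental domain in the sense of Definition~\ref{fundamental}.

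It then remains to check the two quantitative properties. For $(C,\epsilon)$-invariance I would use that, at each unit, $Ku$ is parametrised by the punctures of a single Voronoi cell while $C$ moves the origin by at most $R$; only punctures within distance $R$ of the cell boundary can be carried out of the cell, and the F\o lner (isoperimetric) estimate for the large cells bounds the proportion of such punctures by $\epsilon$, giving $|CKu\setminus Ku|<\epsilon|Ku|$ exactly as in Definition~\ref{gpoidAF} and Theorem~\ref{Suzuki Analogue}. For the diameter bound I would refine the partition by additionally recording the patch $T\sqcap B_\rho(0)$ with $\rho$ chosen larger than the cell diameter plus $\epsilon^{-1}$; this keeps the cell structure and the subgroupoid $K$ unchanged while forcing every piece $F_j^{(i)}$ to be of the form $U(P,t)$ with $P\supset B_{\epsilon^{-1}}(x(t))$, so that any two tilings in $F_j^{(i)}$ agree on $B_{\epsilon^{-1}}(0)$ and the piece has diameter at most $\epsilon$ in the tiling metric.

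The main obstacle is the first step: producing the net $N(T)$ simultaneously \emph{local} (needed for clopenness and for finiteness of the index set via FLC) and \emph{well separated with controlled cell boundaries} (needed for both the F\o lner estimate and the diameter bound), while ensuring that the resulting relation partitions $\Opunc$ rather than merely covering it, so that we obtain a fundamental domain and not just a castle. This is precisely where repetitivity and FLC must be used in tandem --- repetitivity to guarantee relative density of any chosen local marking, and FLC to guarantee uniform discreteness and finiteness of the patch types --- and it is the analogue, in the transversal picture, of the box decompositions underlying the flat-Cantor-transversal argument.
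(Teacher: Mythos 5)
Your outline is correct, but it follows a genuinely different route from the paper. The paper does not construct the elementary subgroupoids from scratch: almost finiteness of $\Rpunc$ is obtained beforehand by showing $\Opunc$ is a flat Cantor transversal and citing Matui's result, and the proof of this lemma consists only of the diameter refinement. There, almost finiteness is rephrased via Theorem~\ref{Suzuki Analogue} in terms of clopen tower decompositions of $\Opunc$; using FLC each shape is written as a disjoint union of basis sets $V(P,t,t')$ with base $U(P,P(0))$, the patches are enlarged so that $B_{\epsilon^{-1}}(x(t))\subset P$ (forcing levels of diameter $<\epsilon$), and invariance survives because the set of arrows out of each unit is unchanged --- an argument in the spirit of Lemma~\ref{amplediameter}, recorded for later use in Remark~\ref{tilingtowersbasissets}. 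You instead build a $(C,\epsilon)$-invariant elementary subgroupoid directly from a locally derived Delone set of punctures and its Voronoi cells, with a F\o lner estimate for the invariance; this in effect re-proves the cited Matui result in the tiling setting. Your route is more self-contained and makes the isoperimetric geometry explicit (the same volume estimates appear in the paper's Lemma~\ref{tilingTset}); the paper's route is shorter given the cited machinery and delivers immediately the concrete tower form $(U(P,t),\bigsqcup_{t'}V(P,t,t'))$ that is reused in the $\ZZ$-stability argument.

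Two points in your sketch need repair. First, a locally derived net of \emph{arbitrarily large} separation does not follow from repetitivity and FLC alone: a locally derived point set in a periodic tiling inherits the periods, so its separation is bounded by the period. The needed ingredient is aperiodicity: by FLC and a compactness argument, in an aperiodic hull two occurrences of a ball patch of radius $\rho$ are separated by $r(\rho)\to\infty$ as $\rho\to\infty$, and repetitivity then gives relative density of the occurrences; since aperiodicity is a hypothesis of the lemma this is available, but your attribution omits it (one also needs a local tie-breaking rule on Voronoi boundaries so the cells genuinely partition the punctures, as you note). Second, the diameter refinement cannot be done by recording the origin-centred patch $T\sqcap B_\rho(0)$ on each piece independently: that refinement need not be constant on $K$-orbits, and condition (iii) of Definition~\ref{fundamental} (uniqueness of the $V^{(i)}_{j,k}$ with prescribed range $F^{(i)}_j$ and source $F^{(i)}_k$) can then fail. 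Instead refine the index $i$ by a large patch around the net point of the cell, so that all punctures of a cell see the same enlarged pattern; this keeps $K$ and the fundamental-domain structure intact and forces each $F^{(i)}_j$ to be of the form $U(P,t)$ with $B_{\epsilon^{-1}}(x(t))\subset P$, which is exactly the normal form the paper arranges in Remark~\ref{tilingtowersbasissets}.
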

\begin{proof}
We have already shown that $\Rpunc$ is almost finite in the sense of Definition~\ref{gpoidAF}, so all that remains is to show that we can arrange for the small diameter condition to be satisfied. We make use of the proof of Theorem~\ref{Suzuki Analogue} to phrase the almost finiteness of this groupoid in terms of clopen tower decompositions of $\Opunc$ (see Definition~\ref{defgroupoidactionaf}). We will use a similar argument as in the proof of Lemma~\ref{amplediameter} to show that we can iteratively modify any given tower decomposition to obtain the diameter condition. 

Let $C\subset\Rpunc$ be compact and open, and let $\epsilon>0$. Obtain a $(C,\epsilon)$-invariant clopen tower decomposition of $\Opunc$. 
Choose any tower $(W,S)$ in this decomposition. We aim to modify the tower by splitting it into finitely many towers which are still $(C,\epsilon)$-invariant, and such that the collection of the levels of the new towers is a partition of the union of the levels of $(W,S)$, but so that the levels of the new towers all have diameter smaller than $\epsilon$. Applying this procedure to all of the towers simultaneously will produce a clopen tower decomposition with the properties we seek.

Recall that $S$ decomposes into finitely many compact open $G$-sets $S=\bigsqcup_{k=1}^K S_k$, and first consider the $G$-set $S_1$, which moves the base of the tower to the first level. Consider the collection of patches $\{T\sqcap B_{R_1}(0)\mid T\in\Opunc\}$ for a sufficiently large $R_1$,
to be defined. 
By FLC, there are finitely many such patches $\{P_1^1,\ldots,P_{N_1}^1\}$. Notice that each patch contains the origin, and that the collection $\{U(P_n^1, P_n^1(0))\}_{n=1}^{N_1}$ is pairwise disjoint and covers $\Opunc$, where $P_n^1(0)$ denotes the (unique) tile in $P_n^1$ which contains the origin.   

Since the $G$-set $S_1$ is compact and open in $\Rpunc$, and the collection of sets $V(P,t,t')$ contained in $\Rpunc$ is a basis for the topology on $\Rpunc$, we can choose $R_1$ large enough so that there exists a subcollection $\{P_{n_1}^1,\ldots,P_{n_{M_1}}^1\}$ of $\{P_1^1,\ldots,P_{N_1}^1\}$, and a collection of tiles $\{t_{m,l}^1\}_{l=1,\ldots,L_m}\subset P_{n_m}^1$ for each $m\in\{1,\ldots,M_1\}$ such that 
\[
S_1=\bigsqcup_{m=1}^{M_1}\bigsqcup_{l=1}^{L_m} V(P_{n_m}^1, P_{n_m}^1(0), t_{m,l}^1).
\]
Since $W=s(S_1)$, this implies that the base of the tower is $W=\bigsqcup_{m=1}^{M_1} U(P_{n_m}^1, P_{n_m}^1(0))$, and that the first level is $S_1\cdot W=\bigcup_{m=1}^{M_1}\bigcup_{l=1}^{L_m}U(P_{n_m}^1,t_{m,l}^1)$. 

Since $S_1$ is a $G$-set, and since, for any fixed $m\in\{1,\ldots,M_1\}$ and $l\neq l'$, we have 
\[V(P_{n_m}^1, P_{n_m}^1(0), t_{m,l}^1)\cap V(P_{n_m}^1, P_{n_m}^1(0), t_{m,l'}^1)=\emptyset\]
and 
\[s(V(P_{n_m}^1, P^1_{n_m}(0), t^1_{m,l}))=U(P_{n_m}^1,P_{n_m}^1(0))=s(V(P^1_{n_m}, P^1_{n_m}(0), t^1_{m,l'})),\]
the injectivity of the source map of $S_1$ implies that all the tiles $t^1_{m,l}$ for $l\in\{1,\ldots,L_m\}$ must be equal, to $t^1_m\in P^1_{n_m}$, say. Then, in fact, we have $S_1 = \bigsqcup_{m=1}^{M_1}V(P^1_{n_m},P^1_{n_m}(0),t^1_m)$. 

Since, for $m\neq m'$, we have
\[V(P_{n_m}^1,P_{n_m}^1(0),t_m^1)\cap V(P_{n_{m'}}^1,P_{n_{m'}}^1(0),t_{m'}^1)=\emptyset,\]
the injectivity of the range map on $S_1$ tells us that the collection 
\[\{r(V(P^1_{n_m}, P^1_{n_m}(0), t^1_{m}))\}_{m=1,\ldots,M_1}=\{U(P^1_{n_m},t^1_{m})\}_{m=1,\ldots,M_1}\]
must be pairwise disjoint.

By choosing $R_1$ to be large enough, we may assume that we have $B_{\epsilon^{-1}}(x(t^1_{m}))\subset P^1_{n_m}$ for each $m\in\{1,\ldots,M_1\}$, so that the diameter of $U(P^1_{n_m},t^1_{m})$ is smaller than $\epsilon$. Indeed, since the vectors of translation associated to elements of $S_1$ are already prescribed above, and since enlarging $R_1$ produces a new finite collection of patches, each of which contains some patch $P_{n_m}$ from above, increasing $R_1$ corresponds to  partitioning each $V(P_{n_m}^1,P_{n_m}^1(0),t_m^1)$ into subsets $\{V(P^1_{n_m,q}, P^1_{n_m}(0), t_m^1)\}_{q=1,\ldots,Q_m}$, where $P^1_{n_m}\subset P^1_{n_m,q}$. Since $t_m^1$ does not depend on $q$, we simply ensure that  $B_{\epsilon^{-1}}(x(t_m^1))\subset P^1_{n_m,q}$.

The clopen tower $(W,S)$ now splits into the collection of ``one-level'' clopen towers $\{(W^1_m,H^1_m) \mid m=1,\ldots,M_1\}$
by taking 
\[
W^1_m\coloneqq W\cap U(P^1_{n_m},P^1_{n_m}(0))=U(P^1_{n_m},P^1_{n_m}(0))
\]
to be the base of the $m$-th tower, and $H^1_m\coloneqq S_1\cap s^{-1}(W^1_m)=V(P^1_{n_m},P^1_{n_m}(0),t^1_m)$ to be the shape of the $m$-th tower. 
The single level of this tower is $U(P^1_{n_m},t^1_m)$, and,
by our choice of $R_1$,
it has diameter smaller than $\epsilon$. 
In addition, as we saw above, 
the collection of the tower levels $\{H^1_m\cdot W^1_m\}_{m=1,\ldots,M_1}=\{U(P_m^1,t_m^1)\}_{m=1,\ldots,M_1}$ is pairwise disjoint, and so partitions the first level $S_1\cdot W=\bigsqcup_{m=1}^{M_1}U(P_{n_m}^1,t_m^1)$ of the tower $(W,S)$. So, from the ``one-level'' tower $(W,S_1)$, we have obtained a collection of towers whose levels have diameter less than $\epsilon$, and still partition $S_1\cdot W$. 

Now, we iterate this procedure. At the $k$-th step we replace $W$ by the base of each of the towers $(W_m^{k-1},H_m^{k-1})$, for $m=1,\ldots,M_{k-1}$, which were constructed in the $(k-1)$-th step, in turn, and replace $S_1$ by $S_k$. We find $R_k\geq R_{k-1}$ large enough that there exists a subcollection $\{P^k_{n_1},\ldots, P^k_{n_{M_k}}\}$ of the finite set of patches $\{P^k_1,\ldots,P^k_{N_k}\}$ of the form $T\sqcap B_{R_k}(0)$ such that $S_k=\bigsqcup_{m=1}^{M_k} V(P^k_{n_m}, P^k_{n_m}(0), t_m^k)$ (the fact that there is just one tile associated to each patch uses the fact that $S_k$ is a $G$-set, and an argument as above). 
We further choose $R_k$ to ensure that $B_{\epsilon^{-1}}(x(t^k_m))\subset P^k_{n_m}$ for each $m\in\{1,\ldots,M_k\}$. Then, to form the towers for the $k$-th step, we take bases of the form $W_m^k=U(P^k_{n_m}, P^k_{n_m}(0))$. Observe that, for each $m\in\{1,\ldots,M_k\}$, we have $W_m^k\subset W_{\tilde m}^{k-1}$ for some $\tilde m\in\{1,\ldots,M_{k-1}\}$. 
We define the shape $H_m^k=(H_{\tilde m}^{k-1}\cup S_k)\cap s^{-1}(W_m^k)$. That is, we ``pick up'' all of the arrows that our construction had previously associated to this subset, and also include the new arrows contained in the subset $S_k\subset S$ of the shape of the tower $(W,S)$ whose sources lie in this subset.

After finitely many iterations, the process terminates. At the end of the process, we have obtained an $R=R_K$ large enough to work for all of the sets $S_k$ in the construction above simultaneously. In particular, the levels of each tower $(W_m^K, H_m^K)$ for $m\in\{1,\ldots,M_K\}$ will have diameter smaller than $\epsilon$, and will partition $\bigsqcup_{k=1}^K(S_k\cdot W)$.
We claim that the shapes of the towers we constructed are still $(C,\epsilon)$-invariant. Let $m\in\{1,\ldots,M_K\}$ and let $T\in s(H_m^K)$ be arbitrary. By construction, the collection of arrows from $S$ with source $T$ is equal to the set of arrows from $H_m^K$ with source $T$. This, combined with the fact that $S$ was $(C,\epsilon)$-invariant, shows that $H_m^K$ is $(C,\epsilon)$-invariant too.
\end{proof}
\begin{remark}\label{tilingtowersbasissets}
By following the procedure in the proof above, we may assume, without loss of generality, that each tower $(W,S)$ in a clopen tower decomposition of the action $\Rpunc\curvearrowright\Opunc$ has $W=U(P,t)$ and $S=\bigsqcup_{t'\in Q}V(P,t,t')$ for some patch $P$, some $t\in P$, and some subset $Q\subset P$. 
Given any $\epsilon>0$, we may enlarge $P$ to further assume that $B_{\epsilon^{-1}}(x(t'))\subset P$ for each $t'\in Q$. 
\end{remark}

\section{$\mathcal{Z}$-stability of tiling algebras}\label{sec:tilingZstable}

This section contains our $\ZZ$-stability result (Theorem~\ref{tilingZstable}). Since an application of Theorem~\ref{groupoidOW} will be crucial to our proof, we first present subsets of the tiling groupoid which are compatible with this theorem. 

We caution the reader that for the purposes of this section, we will need to work with tilings of $\R^d$ alongside quasitilings of the tiling groupoid $\Rpunc$. In order to make this distinction clearer, we will make an attempt to be consistent with the notation for tilings as elements of the unit space of the groupoid $\Rpunc$. Generally, tilings of $\R^d$ will be denoted by the letters $u$, $x$ or $y$; vectors in $\R^d$ will be denoted by the letter $z$, and quasitilings of $\Rpunc$ will be made up of tiles $T_1,\ldots,T_n\subset\Rpunc$ and centres $C_1,\ldots,C_n\subset\Rpunc$ (with additional subscripts as necessary).
\begin{lemma}\label{tilingTset}
The groupoid $\Rpunc$ associated to an aperiodic and repetitive tiling of $\R^d$ with FLC admits sequences of subsets $T_l$ as in Theorem~\ref{groupoidOW} of arbitrary length. 
\end{lemma}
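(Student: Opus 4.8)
The plan is to take the tiles $T_i$ to be the sets of arrows that implement translations by short puncture-to-puncture vectors. Concretely, for a radius $R>0$ I would set
\[
A_R := \{(T, T - x(t)) \mid T \in \Opunc,\ t \in T,\ \|x(t)\| \le R\} \subset \Rpunc.
\]
Since the origin tile contributes $x(t)=0$, we have $\Opunc = G^{(0)} \subset A_R$, and $A_{R} \subset A_{R'}$ whenever $R \le R'$. By FLC there are only finitely many $R$-patches up to translation, so, choosing $R$ outside the discrete set of inter-puncture distances, $A_R$ is a finite union of basic sets $V(P,t,t')$ and is therefore compact and open. The crucial observation is that for every $w \in \Opunc$ both the source fibre $A_R w$ and the range fibre $w A_R$ are in bijection with the set of punctures of $w$ lying in $\overline{B_R(0)}$; writing $f_R(w)$ for the number of such punctures, this gives $|A_R w| = |w A_R| = f_R(w)$.

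First I would verify conditions (i) and (ii). As recorded in the proof that $\Opunc$ is a flat Cantor transversal, FLC and repetitivity provide constants, uniform over all of $\Opunc$, witnessing uniform discreteness and relative density of the puncture sets; a standard packing-and-covering argument then yields $c,C>0$ and $R_*>0$ with $c R^d \le f_R(w) \le C R^d$ for all $w \in \Opunc$ and all $R \ge R_*$. Consequently
\[
\frac{\inf_{w} |A_R w|}{\sup_{w} |w A_R|} = \frac{\inf_{w} f_R(w)}{\sup_{w} f_R(w)} \ge \frac{c}{C} =: \beta_0 > 0 \qquad (R \ge R_*).
\]
Choosing $m$ large enough that $\beta_0(1-\epsilon) \ge 1/m$ makes (i) and (ii) hold for every $T_i = A_{R_i}$ with $R_i \ge R_*$; when the ratio tends to $1$ (e.g. under unique ergodicity) one may keep $m=2$, since $\epsilon<1/2$ forces $1-\epsilon>1/2$.

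Finally I would establish the boundary condition (iii), which I expect to be the main obstacle. A direct computation shows that $g = (w-p,w) \in \partial_{T_{i-1}}(T_i w)$ forces the puncture $p$ of $w$ to lie within $R_{i-1}$ of the sphere of radius $R_i$, i.e. $R_i - R_{i-1} < \|p\| \le R_i + R_{i-1}$. By uniform discreteness the number of punctures of $w$ in this annular shell is at most a constant multiple $C'R_i^{d-1}R_{i-1}$, uniformly in $w$, whence
\[
\frac{|\partial_{T_{i-1}}(T_i w)|}{|T_i w|} \le \frac{C' R_i^{d-1} R_{i-1}}{c R_i^{d}} = \frac{C'}{c}\cdot\frac{R_{i-1}}{R_i}.
\]
It then suffices to choose the radii recursively with $R_1 \ge R_*$ and $R_i \ge \max(R_*,\, K R_{i-1})$ for $K := 8C'/(c\epsilon^2)$, so that the right-hand side is at most $\epsilon^2/8$ for every $i$. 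This produces nested compact open sets $T_1 \subset \cdots \subset T_n$ containing $G^{(0)}$ and satisfying (i)--(iii) for arbitrary $n$. The delicate points, where I would spend the most care, are the uniform-in-$w$ shell count and the clean identification of $\partial_{T_{i-1}}(T_i w)$ with punctures near $\partial B_{R_i}(0)$; both rest on the uniform Delone constants afforded by FLC and repetitivity.
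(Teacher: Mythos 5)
Your proposal is correct and follows essentially the same route as the paper: the tiles are the arrows of $\Rpunc$ implementing translations of magnitude at most $R$, conditions (i)--(ii) follow from uniform puncture-counting (Delone/volume) estimates giving a positive lower bound on $\inf_w|T_iw|/\sup_w|wT_i|$, and condition (iii) follows by identifying $\partial_{T_{i-1}}(T_iw)$ with punctures in an annulus of width $2R_{i-1}$ about the sphere of radius $R_i$ and letting the radii grow fast enough. The only differences are cosmetic: the paper estimates puncture counts via tile volumes ($V_{\textnormal{min}},V_{\textnormal{max}},D_{\textnormal{max}}$) and chooses the radii by an additive recursion $n_{i+1}=n_i+k(n_i)$, whereas you use Delone constants and a multiplicative recursion, both of which work.
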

\begin{proof}
We directly construct a sequence of subsets $\{T_l\}_{l\in\N}$ as in Theorem~\ref{groupoidOW}. Let $\Opunc$ be the punctured hull of a tiling as in the statement. By FLC, there is a prototile of minimal volume $V_\textnormal{min}$, and one of maximal volume $V_\textnormal{max}$. Find $m\in\N$ such that $m>\max(V_\textnormal{max}/V_\textnormal{min}, 2)$. For $n\in\N$, define $T_n\subset\Rpunc$ as follows. For each $u\in\Opunc$ consider the set of punctures in $u$ which lie in $B_n(0)$. For each such puncture $x(t)$ (where $t$ is a tile in the tiling $u$ of $\R^d$), include the arrow $(u-x(t), u)$ in $T_n$. In other words, $T_n$ is the collection of allowable translates by vectors of magnitude smaller than $n$
\[T_n=\{(u+z,u)\in\Rpunc\mid u\in\Opunc, \|z\|<n\}.\]
Notice that $T_n$ is closed in the metric topology on $\Rpunc$. Indeed, if $\{(u_j+z_j,u_j)\}_{j\in\N}$ converges to $(u+z,u)$ then both $d(u,u_j)\rightarrow0$ and $\|z-z_j\|\rightarrow0$. Eventually, this convergence forces $u_j\sqcap B_n(0)=u\sqcap B_n(0)$. Furthermore, since $\|z_j\|<n$ for each $j$, using the uniform discreteness of the puncture set together with the last sentence and the convergence of $z_j$ to $z$ implies that eventually $z_j=z$ for all $j$. Thus we see that $\|z\|<n$ and so $(u+z,u)\in T_n$, showing that $T_n$ is closed. In addition, we have that
\[T_n\subset\bigcup_{u\in\Opunc}\bigcup_{t\in u\sqcap B_n(0)}V(u\sqcap B_n(0),t,u(0)).\]
By FLC, there are only finitely many patches $u\sqcap B_n(0)$ involved in the union on the right, each of which contain finitely many tiles. Thus the right-hand side is a finite union of compact sets, and so is itself compact. Thus we see that $T_n$ is a closed subset of a compact set in a metric space, so it is compact. 
Note that $T_n$ is closed under taking inverses, and therefore $|uT_n|=|T_nu|$ for each $u\in\Opunc$. It follows that
\[\frac{\inf_{u\in\Opunc}|T_nu|}{\sup_{u\in\Opunc}|uT_n|}\leq1.\]
We look to bound $\inf_{u\in\Opunc}|T_nu|$ and $\sup_{u\in\Opunc}|uT_n|=\sup_{u\in\Opunc}|T_nu|$. We will use volume estimates to do this. 

First, for $u\in\Opunc$ to maximise $|T_nu|$ we wish there to be as many tiles of $u$ intersecting $B_n(0)$ as possible, so that we can assume the punctures of all these tiles lie in this ball. By FLC there exists a prototile of largest diameter $D_\textnormal{max}$. Consider the ball $B_{n+D_\textnormal{max}}(0)$. If a tile $t$ intersects the complement of this ball then, since its diameter is at most $D_\textnormal{max}$, it cannot intersect $B_n(0)$. Therefore, since $t$ intersects $B_n(0)$ only if it is contained within $B_{n+D_\textnormal{max}}(0)$, we ask how many tiles can fit within the latter ball. Clearly a possible bound is given by
\[\sup_{u\in\Opunc}|uT_n|\leq\frac{\textnormal{Vol}(B_{n+D_\textnormal{max}}(0))}{V_\textnormal{min}}=\frac{V_d(n+D_\textnormal{max})^d}{V_\textnormal{min}}\]
where $V_d$ denotes the volume of the unit ball in $\R^d$. 

We argue similarly to establish our second bound. From this point onwards we will assume that $n>D_\textnormal{max}$. If a tile intersects $B_{n-D_\textnormal{max}}(0)$, then its puncture must lie in $B_n(0)$, so we seek to minimize the number of tiles intersecting this ball. To do so, we may assume any such tile has volume $V_\textnormal{max}$ and is completely contained within the ball. Then we see that
\[\inf_{u\in\Opunc}|T_nu|\geq\frac{V_d(n-D_\textnormal{max})^d}{V_\textnormal{max}}.\]

Combining our bounds yields
\[\frac{\inf_{u\in\Opunc}|T_nu|}{\sup_{u\in\Opunc}|uT_n|}\geq\frac{V_dV_\textnormal{min}(n-D_\textnormal{max})^d}{V_dV_\textnormal{max}(n+D_\textnormal{max})^d}\underset{n\rightarrow\infty}{\rightarrow}\frac{V_\textnormal{min}}{V_\textnormal{max}}>\frac{1}{m}.\]
Therefore, for large enough $n$ and small enough $0<\epsilon<1/2$ we have that 
\[\frac{\inf_{u\in\Opunc}|T_nu|}{\sup_{u\in\Opunc}|uT_n|}(1-\epsilon)\geq\frac{1}{m}\]
too. 

Now, fix $k\in\N$. Since $T_n$ is the set of translates in $\Rpunc$ with magnitude smaller than $n$, we see that any translate in $\partial_{T_n}(T_{n+k})$ must have magnitude larger than $(n+k)-n=k$ and smaller than $(n+k)+n=2n+k$. Thus, to bound $|\partial_{T_n}(T_{n+k}u)|$ above, we wish to maximise the number of punctures which appear in the annulus with center $0$, inner radius $k$, and outer radius $2n+k$. A tile which intersects the complement of the annulus with center $0$, inner radius $k-D_\textnormal{max}$, and outer radius $2n+k+D_\textnormal{max}$ cannot have its puncture in the region of interest, so we estimate the number of tiles contained in this latter annulus as follows
\[|\partial_{T_n}(T_{n+k}u)|\leq\frac{V_d((2n+k+D_\textnormal{max})^d-(k-D_\textnormal{max})^d)}{V_\textnormal{min}}\]
where the top of the fraction is just the volume of the second annulus. On the other hand, we know that
\[|T_{n+k}u|\geq\frac{V_d(n+k-D_\textnormal{max})^d}{V_\textnormal{max}}.\]
Let
\[C_{n,k}\coloneqq\frac{V_\textnormal{max}((2n+k+D_\textnormal{max})^d-(k-D_\textnormal{max})^d)}{V_\textnormal{min}(n+k-D_\textnormal{max})^d}\]
so that
\begin{align*}
|\partial_{T_n}(T_{n+k}u)|&\leq\frac{V_d((2n+k+D_\textnormal{max})^d-(k-D_\textnormal{max})^d)}{V_\textnormal{min}}\\
&=C_{n,k}\frac{V_d(n+k-D_\textnormal{max})^d}{V_\textnormal{max}}\\
&\leq C_{n,k}|T_{n+k}u|.
\end{align*}
Notice that for each fixed $n$, we have that $C_{n,k}\rightarrow0$ as $k\rightarrow\infty$. Therefore, for each $n$, we can find $k(n)\geq1$ such that for any $k\geq k(n)$ we have $C_{n,k}\leq\epsilon^2/8$. Then the inequality above becomes
\[|\partial_{T_n}(T_{n+k(n)}u)|\leq C_{n,k(n)}|T_{n+k(n)}u|\leq\epsilon^2/8|T_{n+k(n)}u|.\]

We now relabel the sequence $(T_n)$ by removing the first terms and shifting index, to assume that every set in the sequence satisfies properties (i) and (ii) from the statement of Theorem~\ref{groupoidOW}. We construct a subsequence by choosing indices $n_i$ recursively as follows. Let $n_1=1$ and put $n_{i+1}=n_i+k(n_i)$. By construction, the sequence $(T_{n_i})_{i\in\N}$ will satisfy all of the requirements of Theorem~\ref{groupoidOW}. 
\end{proof}

To obtain $\ZZ$-stability, we use the following criterion of Hirshberg-Orovitz, which was obtained using the breakthrough result of Matui and Sato, where they show that any nuclear tracially AF-algebra is $\ZZ$-stable, see \cite[Theorem~1.1 and Theorem~5.4]{MS2}. Hirshberg's and Orovitz's criterion requires us to embed arbitrarily large matrix algebras into the crossed product so that the image approximately commutes with a given finite set and is large in trace. That this criterion is equivalent to $\ZZ$-stability for simple separable unital nuclear $C^*$-algebras is recorded in \cite[Proposition~2.2 and Theorem~4.1]{HO}.
\begin{thm}[{\cite[Definition~2.1]{HO}}]\label{Zstable}
Let $A$ be a simple separable unital nuclear $C^*$-algebra which is not isomorphic to $\C$. Write $\precsim$ for the relation of Cuntz subequivalence over $A$, and given $a,b\in A$ write $\lbrack a,b\rbrack$ for the commutator $ab-ba$. Suppose that for every $n\in\N$, finite set $F\subset A$, $\epsilon>0$ and nonzero positive element $a\in A$ there exists an order zero completely positive and contractive map $\phi:M_n\rightarrow A$ such that
\begin{enumerate}[label=(\roman*)]
\item $1-\phi(1)\precsim a$,
\item $\|\lbrack w,\phi(B)\rbrack\|<\epsilon$ for all $w\in F$ and $B\in M_n$ with $\|B\|=1$.
\end{enumerate}
Then $A$ is $\ZZ$-stable.
\end{thm}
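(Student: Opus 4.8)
The plan is to recognise this statement as the Hirshberg--Orovitz theorem identifying \emph{tracial $\ZZ$-absorption}---exactly the local order zero condition displayed in (i) and (ii)---with genuine $\ZZ$-stability for simple, separable, unital, nuclear $C^*$-algebras. In the paper itself the theorem is used as a black box from \cite{HO,MS2}, so a ``proof'' means reconstructing the argument of \cite{HO} on top of the property (SI) machinery of Matui and Sato \cite{MS2}. Write $A_\infty=\ell^\infty(\N,A)/c_0(\N,A)$ for the sequence algebra and $F(A)=(A_\infty\cap A')/(\mathrm{Ann}(A)\cap A_\infty)$ for Kirchberg's central sequence algebra. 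The external fact that drives everything is the Toms--Winter characterisation: for separable unital $A$, one has $A\cong A\otimes\ZZ$ as soon as there is a unital $*$-homomorphism $\ZZ\to F(A)$. Thus the entire problem reduces to manufacturing such an embedding out of the hypothesised maps.

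First I would assemble the local data into a global map. Fixing $n$ and feeding the hypothesis an exhausting sequence of finite sets $F_k$ with $\epsilon_k\to0$ and one fixed nonzero positive $a$, a routine reindexing argument converts the resulting sequence $(\phi_k\colon M_n\to A)$ into a single completely positive contractive order zero map $\Phi\colon M_n\to F(A)$: condition (ii) forces the limit into the relative commutant $A_\infty\cap A'$, and the order zero property survives the limit by the structure theory of order zero maps. The role of condition (i) is to make the ``unital defect'' $1-\Phi(1)$ tracially negligible; since $1-\phi(1)\precsim a$ holds for \emph{every} nonzero positive $a$, the defect is Cuntz-dominated by arbitrarily small elements, which is precisely the smallness hypothesis demanded by the comparison arguments to come.

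Next I would promote $\Phi$ to a unital embedding of $\ZZ$. One first extracts strict comparison for $A$ from the hypothesis (the defect being dominated by arbitrary positive elements is tailored to this), and then invokes Matui--Sato's property (SI): here nuclearity enters decisively, since it forces the tracial von Neumann completions $\pi_\tau(A)''$ to be hyperfinite by Connes' theorem, which is what makes (SI) available. Property (SI) supplies, inside $F(A)$, an element intertwining the complementary projections of the order zero map, so that $\Phi$ can be ``completed across its defect'' into a genuinely unital order zero map $M_n\to F(A)$. Running this for each $n$ and matching the $M_n$ and $M_{n+1}$ maps through the relations of a prime dimension drop algebra $\ZZ_{n,n+1}$ builds a unital $*$-homomorphism $\ZZ\to F(A)$, since $\ZZ$ is an inductive limit of such algebras (R{\o}rdam--Winter); by the characterisation quoted above, this yields $A\cong A\otimes\ZZ$.

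The main obstacle is exactly this promotion step: converting the Cuntz-subequivalence smallness of (i) into an honest comparison of positive elements in the central sequence algebra and then correcting the unital defect. Manufacturing $\Phi$ is essentially bookkeeping, but the (SI)/comparison argument is where the deep input lives, and it is here that simplicity (to control trace behaviour), nuclearity (to access hyperfiniteness and the uniform-$2$-norm technology of \cite{MS2}), and the analysis of the tracial completion are all indispensable. For the present application, the two hypotheses of the theorem are supplied by the surrounding sections: condition (ii) follows from the approximately invariant castle structure furnished by almost finiteness, while condition (i) is reduced, via Lemma~\ref{Phillips2}, to a dynamical comparison of positive elements of $C(\Opunc)$ inside $C(\Opunc)\rtimes_r\Rpunc$.
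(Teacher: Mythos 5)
The paper never proves this statement: it is imported verbatim as the Hirshberg--Orovitz criterion, with the equivalence to $\ZZ$-stability attributed to \cite[Proposition~2.2 and Theorem~4.1]{HO} and the underlying tracially-AF-implies-$\ZZ$-stable technology to \cite{MS2}. So there is no in-paper argument to compare against, and your proposal is in effect a reconstruction of the cited proof. As such a reconstruction it is essentially faithful: exhausting finite sets and tolerances to assemble the hypothesised maps into a c.p.c.\ order zero map $\Phi\colon M_n\to F(A)$ landing in the central sequence algebra, using hypothesis (i) to make the defect $1-\Phi(1)$ small in every trace, deriving strict comparison from tracial $\ZZ$-absorption, and then invoking property (SI) (where nuclearity enters via injectivity/hyperfiniteness of the tracial weak closures) is exactly the Hirshberg--Orovitz/Matui--Sato route, and your closing remark correctly identifies how the two hypotheses are verified in the paper (condition (ii) from the castle structure in Theorem~\ref{tilingZstable}, condition (i) via Lemma~\ref{Phillips2}). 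One technical wrinkle in your final step: property (SI) does not let you upgrade $\Phi$ to a \emph{unital} order zero map $M_n\to F(A)$; it produces $s\in F(A)$ with $s^*s=1-\Phi(1)$ and $\Phi(e_{11})s=s$, and it is the R{\o}rdam--Winter characterisation of unital $*$-homomorphisms out of the prime dimension drop algebra $Z_{n,n+1}$ (in terms of an order zero map together with such an intertwiner) that converts this data into a unital $*$-homomorphism $Z_{n,n+1}\to F(A)$; a single $n\geq 2$ already suffices for $\ZZ$-stability, so no compatible matching of the maps for $M_n$ and $M_{n+1}$ along an inductive limit presentation of $\ZZ$ is needed. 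With that correction your outline is the standard proof of the quoted theorem.
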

The following lemma will allow us to witness Cuntz subequivalence when checking condition (i) of Theorem~\ref{Zstable}.
\begin{lemma}[{c.f. \cite[Lemma~12.3]{Kerr}}]\label{David12.3}
Let $G\curvearrowright X$ be an action of a groupoid on a compact metrizable space. Let $A$ be a closed subset of $X$ and $B$ an open subset of $X$ such that $A\prec B$. Let $f,g:X\rightarrow\lbrack0,1\rbrack$ be continuous functions such that $f=0$ on $X\setminus A$, and $g=1$ on $B$. Then there exists a $v\in C(X)\rtimes_r G$ such that $v^*gv=f$.
\end{lemma}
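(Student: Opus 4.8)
The plan is to build $v$ as a finite sum of the partial isometries coming from compact open bisections, weighted by a partition of unity and by $f^{1/2}$, exactly as in the group case \cite[Lemma~12.3]{Kerr}, and then to verify $v^*gv=f$ by a direct computation in which the diagonal terms reassemble $f$ while the off-diagonal terms vanish.

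First I would set $C=\supp f$. Since $f=0$ on $X\setminus A$ and $A$ is closed, $\{f>0\}\subseteq A$, so $C$ is a closed (hence compact) subset of $A$ and the relation $A\prec B$ applies to it. This produces a finite open cover $\{U_i\}_{i=1}^n$ of $C$ together with sets $S_{U_i}\subseteq G$ satisfying $r_X(U_i)\subseteq s(S_{U_i})$ and such that the slices $\{tU_i\mid i,\ t\in S_{U_i}\}$ are pairwise disjoint subsets of $B$. The crucial preliminary step is to upgrade this data to a finite family of pairs $(O_k,W_k)$, $k=1,\dots,N$, in which each $W_k$ is a compact open bisection, $r_X(O_k)\subseteq s(W_k)$, the open sets $O_k$ still cover $C$, and the images $W_k\cdot O_k$ are pairwise disjoint subsets of $B$. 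This is where the \'etale structure enters: using that $G$ has a base of compact open bisections and that a compact subset of $G$ has uniformly bounded source fibres, each $S_{U_i}$ may be decomposed into finitely many source-injective pieces sitting inside compact open bisections, and the cover refined accordingly. The key point is that once $W_k$ is a bisection with $r_X(O_k)\subseteq s(W_k)$, the set $W_k\cdot O_k$ is open (Lemma~\ref{actiontopology}) and is precisely the union of the slices $\{t\cdot O_k\mid t\in W_k\}$; since all such slices belong to the given pairwise disjoint family, the open images $\{W_k\cdot O_k\}_k$ are automatically pairwise disjoint. I expect this reduction---turning the slice-wise disjointness furnished by the definition of $\prec$ into genuine disjointness of the \emph{open} image sets while preserving the covering of $C$---to be the main obstacle, since the source multiplicity present in a general $S_{U_i}$ must be eliminated (a partial isometry, and hence the clean cancellation below, requires injectivity of the source map).

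With this data in hand I would choose continuous functions $\psi_1,\dots,\psi_N:X\to[0,1]$ with $\supp\psi_k\subseteq O_k$ and $\sum_k\psi_k=1$ on $C$, and put $h_k=\psi_k^{1/2}$, so that $\sum_k h_k^2=1$ on $C$ and $\supp h_k\subseteq O_k$. Writing $u_{W_k}=\mathbf 1_{W_k}\in C_c(G\ltimes X)\subseteq C(X)\rtimes_r G$ for the partial isometry of the bisection $W_k$, I define
\[
v=\sum_{k=1}^N u_{W_k}\,(h_k f^{1/2})\in C(X)\rtimes_r G .
\]

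Finally I would compute $v^*gv=\sum_{k,l}(h_kf^{1/2})\,u_{W_k}^*\,g\,u_{W_l}\,(h_lf^{1/2})$ using the multiplication and involution formulae for $\Gamma_c(G,r^*\AA)$ recorded in Section~\ref{sec:gpoidcrossedproducts} together with the identity $u_{W_k}^*=\mathbf 1_{W_k^{-1}}$. The diagonal term $k=l$ contributes $(h_kf^{1/2})^2\,(g\circ\phi_k)$, where $\phi_k:O_k\to W_k\cdot O_k$ is the partial homeomorphism implemented by $W_k$; since $\phi_k(O_k)=W_k\cdot O_k\subseteq B$ and $g\equiv1$ on $B$, this equals $h_k^2 f$, and summing over $k$ gives $f\sum_k h_k^2$, which equals $f$ because $f$ is supported on $C$ and $\sum_k h_k^2=1$ there. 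For $k\neq l$ the term transports mass supported in $W_l\cdot O_l$ back through $W_k^{-1}$, so it is nonzero only where $W_k\cdot O_k$ and $W_l\cdot O_l$ meet; as these images are pairwise disjoint, every off-diagonal term---and in particular every contribution lying off the unit space---vanishes. Hence $v^*gv=f$, as required. The remaining verifications, namely that $v$ genuinely lies in the reduced crossed product and that the convolution bookkeeping is as claimed, are routine given the formulas of Section~\ref{sec:gpoidcrossedproducts}.
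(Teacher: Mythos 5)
Your overall architecture---a partition of unity subordinate to the cover coming from $A\prec B$, a transfer element $v$ weighted by $f^{1/2}$, and a diagonal/off-diagonal computation in which disjointness of the images kills the cross terms while $g\equiv 1$ on $B$ reassembles $f$ on the diagonal---is exactly the paper's. The genuine gap is your preliminary ``upgrade'' step. The definition of $\prec$ only hands you, for the closed set $C\subseteq A$, finitely many open sets $U_i$ covering $C$ and \emph{arbitrary} subsets $S_{U_i}\subseteq G$ with $r_X(U_i)\subseteq s(S_{U_i})$ such that the slices $\{tU_i\mid i,\ t\in S_{U_i}\}$ are pairwise disjoint subsets of $B$. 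These $S_{U_i}$ need not be compact: typically they contain one element above each point of $r_X(U_i)$ (an uncountable set), chosen with no continuity whatsoever. So the claim that each $S_{U_i}$ ``may be decomposed into finitely many source-injective pieces sitting inside compact open bisections'' is unjustified---the uniform bound on source fibres you invoke is a property of \emph{compact} subsets of an \'etale groupoid, and no finite family of bisections need capture $S_{U_i}$. Moreover, even granting bisections $W_k$ that merely contain pieces of $S_{U_i}$, your disjointness argument fails: $W_k\cdot O_k=\bigcup_{t\in W_k}t\cdot O_k$, but the slices $t\cdot O_k$ for $t\in W_k\setminus S_{U_i}$ are \emph{not} members of the given pairwise disjoint family, so neither $W_k\cdot O_k\subseteq B$ nor pairwise disjointness of the sets $W_k\cdot O_k$ follows. (A smaller point: the lemma is stated for a general groupoid action, and compact open bisections presuppose ampleness.)

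The paper sidesteps this entirely and never upgrades to bisections: it keeps the sets $S_m$ as given, prunes them so that $s(S_m)=r_X(U_m)$ and $s$ is injective on $S_m$ (one element per source fibre), chooses the partition of unity $h_m$ with $\sum_m h_m=1$ on $A$, and sets $v=\sum_{m}\sum_{t\in S_m}U_t(fh_m)^{1/2}$. In the expansion of $v^*gv$, the coefficient attached to the pair $(t,\tilde t)$ is supported in $tU_m\cap \tilde tU_{\tilde m}$, which the defining property of $\prec$ makes empty unless $m=\tilde m$ and $t=\tilde t$; on the surviving diagonal terms $\alpha_{t^{-1}}(g)=1$ because $tU_m\subseteq B$, and injectivity of $s$ on $S_m$ together with $s(S_m)=r_X(U_m)$ gives $v^*gv=f\sum_m h_m|_A=f$. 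In other words, the fibrewise (slice-wise) disjointness is used directly, with no need for the images to be open or for the moving sets to be bisections. Your instinct that one should check $v$ is an honest element of $C(X)\rtimes_r G$ is reasonable (the paper's $v$ is written as a formal sum and this point is not belaboured there), but your proposed remedy does not follow from the hypotheses; to salvage it you would need extra assumptions ensuring the $S_{U_i}$ organize into finitely many compact open bisections, or else argue as the paper does.
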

\begin{proof}
Since $A\prec B$, and $A$ is a closed subset of $A$, find a finite collection $U_1,\ldots,U_M$ of open subsets of $X$ which cover $A$, and subsets $S_1,\ldots,S_M$ of $G$ as in the definition. Without loss of generality, by removing any element $t\in S_m$ such that $s(t)\notin r_X(U_m)$, we may assume that $r_X(U_m)=s(S_m)$. Furthermore, by preserving one element from each source bundle and removing all other elements from $S_m$ if necessary, we may assume that $s: S_m\rightarrow r_X(U_m)$ is injective for each $m\in\{1,\ldots,M\}$. 

Using a partition of unity argument, produce, for each $m\in\{1,\ldots,M\}$, a continuous function $h_m:X\rightarrow\lbrack0,1\rbrack$  such that $h_m(x)=0$ for any $x\in X\setminus U_m$, and which satisfy $0\leq\sum_{m=1}^Mh_m(x)\leq 1$ for any $x\in X$, and $\sum_{m=1}^Mh_m(a)=1$ for any $a\in A$. Put $v=\sum_{m=1}^M\sum_{t\in S_m} U_t(fh_m)^{1/2}$. 

Denote by $\alpha$ the induced action of $G$ on $C(X)$, given by 
\[\alpha_g(f)(x)=\begin{cases}f(g^{-1}x)&\hbox{if } x\in r_X^{-1}(r(g))\\0&\hbox{otherwise}.\end{cases}\]
We compute that
\begin{align*}
v^*gv&=\left(\sum_{m=1}^M\sum_{t\in S_m}(fh_m)^{1/2}U_{t^{-1}}\right)g\left(\sum_{\tilde m=1}^M\sum_{\tilde t\in S_{\tilde m}}U_{\tilde t}(fh_{\tilde m})^{1/2}\right)\\
&=\sum_{m,\tilde m=1}^M\sum_{t\in S_m}\sum_{\tilde t\in S_{\tilde m}}U_{t^{-1}}g\alpha_t(f)^{1/2}\alpha_t(h_m)^{1/2}\alpha_{\tilde t}(h_{\tilde m})^{1/2}\alpha_{\tilde t}(f)^{1/2}U_{\tilde t}.\\
\end{align*}
Note that $\alpha_t(h_m)$ is supported on $tU_m$, and $\alpha_{\tilde t}(h_{\tilde m})$ is supported on $\tilde tU_{\tilde m}$, which are disjoint unless $m=\tilde m$ and $t=\tilde t$. Thus, the only nonzero contributions to the above sum require $t=\tilde t$ and $m=\tilde m$, so we obtain
\begin{align*}
v^*gv&=\sum_{m=1}^M\sum_{t\in S_m}U_{t^{-1}}g\alpha_t(h_m)\alpha_t(f)U_t\\
&=\sum_{m=1}^M\sum_{t\in S_m}\alpha_{t^{-1}}(g)h_mf.
\end{align*}
The term corresponding to a particular $m\in\{1,\ldots,M\}$ and $t\in S_m$ in the above sum is supported in $U_m\cap r_X^{-1}(s(t))$. For $x\in U_m\cap r_X^{-1}(s(t))$, we have $\alpha_{t^{-1}}(g)(x)=g(tx)$, where $tx\in tU_m\subset B$, so we obtain $\alpha_{t^{-1}}(g)=1$ wherever this term is supported, because $g(b)=1$ for any $b\in B$ by assumption. This gives
\[v^*gv=\sum_{m=1}^M\sum_{t\in S_m}(fh_m)|_{U_m\cap r_X^{-1}(s(t))}\]
Since $s(S_m)=r_X(U_m)$, since $s:S_m\rightarrow r_X(U_m)$ was injective for each $m$, 
and using the facts that $h_m$ is supported inside $U_m$ and $f$ is supported inside $A$, together with our assumption that $\sum_{m=1}^Mh_m(a)=1$ for each $a\in A$, 
we see that
\[v^*gv=\sum_{m=1}^M(fh_m)|_{U_m}=\sum_{m=1}^Mfh_m=\sum_{m=1}^Mf(h_m)|_A=f\sum_{m=1}^M(h_m)|_A=f.\qedhere\]
\end{proof}

We now turn to our $\ZZ$-stability result. Our method of proof is by now well-established in the group case \cite{CJKMSTD, Kerr}. Nevertheless, we wish to provide an outline of the proof. Our outline will focus on the first half of the proof (pages \pageref{Zstabilitystart}-\pageref{endsetup}), which contains a large amount of technical setup. For now, we aim to ignore the technicalities for a sharper focus on the philosophy behind the proof. We also highlight parts of the argument which are specifically necessary in the groupoid case.

We begin with a tiling $C^*$-algebra, $A_\textnormal{punc}=C^*(\Rpunc)$. Observe that $\Rpunc$ is isomorphic to the transformation groupoid $\Rpunc\ltimes\Opunc$ associated to the canonical groupoid action of $\Rpunc$ on its unit space $\Opunc$. Since $\Rpunc\cong\Rpunc\ltimes\Opunc$ is almost finite by Lemma~\ref{tilingalmostfinite}, it follows from Theorem~\ref{Suzuki Analogue} that this action is almost finite. 
We use \cite[Proposition~4.38]{Goehle} to think of $C^*(\Rpunc)\cong C^*(\Rpunc\ltimes\Opunc)\cong C(\Opunc)\rtimes_r\Rpunc$ as a groupoid crossed product, and we seek to embed arbitrarily large matrix algebras into this algebra to witness $\ZZ$-stability using Theorem~\ref{Zstable}. 

We consider compact subsets $\Opunc\subset T_1\subset\cdots\subset T_L\subset\Rpunc$ as in Lemma~\ref{tilingTset}, and we use almost finiteness of the action $\Rpunc\curvearrowright\Opunc$ to obtain a clopen tower decomposition $\{(W_k,S_k)\}$ whose shapes are sufficiently invariant to be quasitiled by the tiles $\{T_1,\ldots,T_L\}$, so that the quasitiling of $S_k$ has centres $\{C_{k,1},\ldots,C_{k,L}\}$. Compared to the group case, this step requires additional care because it will be important that the quasitiling encapsulates the structure of the tower. In the group case, each tower level is obtained by allowing a single group element to act on the base of the tower, but this is not generally true for groupoids due to the fibred nature of their actions. Therefore, we must take some care to ensure that, for each tower level $S_{k,p}W_k$, each of the sets $C_{k,l}$ and $T_lC_{k,l}$ either contains all of $S_{k,p}$, or does not intersect it at all. This consideration is the main point of difficulty in generalising to the groupoid setting, and must be accounted for whenever we modify these subsets.

We introduce a finite open cover $\{U_1,\ldots,U_M\}$ of $\Opunc$ by sets of sufficiently small diameter, and use Lemma~\ref{amplediameter} to assume that our tower levels have diameter smaller than the Lebesgue number of the cover. We partition each of the sets $C_{k,l}$ into $C_{k,l,1},\ldots,C_{k,l,M}\subset C_{k,l}$ by requiring that the tower levels associated to elements of $C_{k,l,m}$ are subordinate to $U_m$. This allows the later use of a uniform continuity argument to obtain approximate centrality of our matrix embedding under a finite subset of $C(\Opunc)$. 
We then choose $n$ equally sized pairwise disjoint subsets $C_{k,l,m}^{(1)},\ldots,C_{k,l,m}^{(n)}\subset C_{k,l,m}$ of cardinality $\lfloor|C_{k,l,m}|/n\rfloor$, and define a system of bijections between these sets. The matrix unit $e_{ij}\in M_n$ will be implemented as movement from the tower levels associated to elements of $T_lC_{k,l,m}^{(j)}$ to those associated to the corresponding elements of $T_lC_{k,l,m}^{(i)}$ under the appropriate bijection. 

We also need to ensure that our matrix embedding is approximately central under the action of a predefined compact subset of the groupoid. To do so, we use $\beta$-disjointness of the quasitiling $\{T_lc\mid c\in C_{k,l}\}$ to slightly shrink the tiles to eliminate as much ``stacking'' of tower levels as possible. We assume that the tiles are sufficiently invariant under multiplication by the compact set that we can find a large ``core'' in each consisting of the elements that remain within the tile after a large number $Q$ of such multiplications. We use this ``core'' to partition the tile, grouping the elements according to how many multiplications it takes to remove them from the tile. We then assign each tower level in our matrix embedding an indicator function which is scaled by a factor $q/Q$ for some $q\in\{1,\ldots,Q\}$ based on which set in the partition the associated groupoid element lies. For example, elements in the ``core'', which remain in the tile after $Q$ multiplications, are assigned the value $Q/Q=1$, while those which are removed from the tile after just one multiplication are assigned the value $1/Q$. 
As a result of this construction, conjugation by an element of the compact set is guaranteed to perturb the indicator function by no more than $1/Q$.

The final step is to ensure that our matrix embedding is large in trace. This is expressed by comparison to a positive element, which we may assume is a function in $C(\Opunc)$ by Lemma~\ref{Phillips2}. By using minimality of the action and increasing the invariance of the shapes of the tower if necessary, we are able to assume that a large number of the tower levels are contained within the support of this function; whence, with no loss of generality, we may assume the function takes the constant value 1 on these tower levels. More precisely, since the matrix embedding was constructed to be supported on a union of tower levels, we may ask that the number of tower levels on which the matrix embedding is \textit{not} supported is smaller than the number of tower levels on which the positive element takes the value 1. By setting up an injection between the first set of tower levels and the second, we are able to put the complement of the matrix embedding below the positive element, witnessing the largeness in trace.
 
\begin{thm}\label{tilingZstable}
The $C^*$-algebra associated to an aperiodic and repetitive tiling with FLC is $\ZZ$-stable.
\end{thm}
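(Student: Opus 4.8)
The plan is to verify the Hirshberg--Orovitz criterion of Theorem~\ref{Zstable} for $\Apunc\cong C(\Opunc)\rtimes_r\Rpunc$. First I would record that this algebra meets the standing hypotheses: it is separable and unital since $\Opunc$ is compact metrizable, infinite-dimensional (so not $\cong\C$) since $\Opunc$ is a Cantor set, nuclear by (topological) amenability of $\Rpunc$, and simple because $\Rpunc$ is minimal and, being principal, is effective, so its reduced crossed product is simple. Fixing $n\in\N$, a finite set $F\subset\Apunc$, an $\epsilon>0$ and a nonzero positive $a$, the task is to construct an order zero c.p.c.\ map $\phi\colon M_n\to\Apunc$ with $1-\phi(1)\precsim a$ and $\|[w,\phi(B)]\|<\epsilon$ for all $w\in F$ and $\|B\|=1$.

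The backbone of the construction combines almost finiteness with the groupoid Ornstein--Weiss theorem. Using Lemma~\ref{tilingTset} I would fix compact sets $\Opunc\subset T_1\subset\cdots\subset T_L\subset\Rpunc$ satisfying the hypotheses of Theorem~\ref{groupoidOW} for suitably chosen parameters. Lemma~\ref{tilingalmostfinite} together with Theorem~\ref{Suzuki Analogue} furnishes a clopen tower decomposition $\{(W_k,S_k)\}_k$ of the action $\Rpunc\curvearrowright\Opunc$ whose shapes are $(T_L,\epsilon^2/4)$-invariant; applying Theorem~\ref{groupoidOW} to each shape $S_k$ yields centres $C_{k,1},\dots,C_{k,L}$ so that $\{T_l c\mid c\in C_{k,l}\}$ is $\epsilon$-disjoint and $(1-\epsilon)$-covers $S_k$. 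The first genuinely groupoid-theoretic task is to arrange this quasitiling to be \emph{compatible with the tower structure}: for each level $S_{k,p}W_k$ and each $l$, the level should lie entirely inside $T_lC_{k,l}$ (resp.\ $C_{k,l}$) or be disjoint from it. Using the explicit description of towers from Remark~\ref{tilingtowersbasissets} and refining the patches involved, I would ensure this holds, and preserve it under all later subdivisions.

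Next I would introduce a finite open cover $\{U_1,\dots,U_M\}$ of $\Opunc$ of small diameter and, invoking Lemma~\ref{amplediameter}, assume the tower levels have diameter below the Lebesgue number of the cover; partitioning $C_{k,l}=\bigsqcup_m C_{k,l,m}$ according to which $U_m$ the corresponding level is subordinate to then makes a uniform continuity estimate available for centrality against $C(\Opunc)$. Splitting each $C_{k,l,m}$ into $n$ equal pieces $C_{k,l,m}^{(1)},\dots,C_{k,l,m}^{(n)}$ of size $\lfloor|C_{k,l,m}|/n\rfloor$ and fixing bijections between them, I would define $\phi(e_{ij})$ as a weighted sum of the partial isometries $U_g$ carrying the levels over $T_lC_{k,l,m}^{(j)}$ to those over $T_lC_{k,l,m}^{(i)}$. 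To make $\phi$ genuinely order zero and approximately central under the groupoid, I would exploit $\epsilon$-disjointness to thin the tiles and, for a large $Q$, use the ``core'' of each tile---those elements surviving $Q$ successive multiplications by the relevant compact set---to scale the indicator functions on the levels by factors $q/Q$. Conjugation by a compact-set element then changes these weights by at most $1/Q$, which, combined with the diameter control, yields the commutator bound for every $w\in F$ once the elements of $F$ are expressed through the generating sections.

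Finally, for condition (i), I would apply Lemma~\ref{Phillips2} to replace $a$ by a nonzero positive $a'\in C(\Opunc)$ with $a'\precsim a$, and then use minimality together with a further increase of invariance of the shapes to force most tower levels inside the set where $a'$ may be taken to equal $1$, so that the levels \emph{not} carrying $\phi$ are fewer than those on which $a'\equiv1$. An injection between these two families of levels gives $\Opunc\setminus\supp\phi(1)\prec\{a'=1\}$, and Lemma~\ref{David12.3} then produces $v$ with $v^*gv=f$ realizing $1-\phi(1)\precsim a'\precsim a$. I expect the main obstacle to be exactly the compatibility constraint flagged above: because a single shape-piece $S_{k,j}$ acts by different arrows on different base points, keeping each tower level uniformly inside or outside every $C_{k,l}$ and $T_lC_{k,l}$---and not destroying this alignment while subdividing for the cover $\{U_m\}$, for the $n$-fold splitting, and for the core decomposition---is the delicate bookkeeping on which the whole argument rests.
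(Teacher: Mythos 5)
Your proposal follows essentially the same route as the paper's proof: the Hirshberg--Orovitz criterion verified via Lemma~\ref{tilingTset}, Lemma~\ref{tilingalmostfinite} with Theorem~\ref{Suzuki Analogue}, the groupoid Ornstein--Weiss Theorem~\ref{groupoidOW}, the small-diameter cover with the $n$-fold splitting and bijections, the $K^Q$-core weighting by $q/Q$, and Lemmas~\ref{Phillips2} and~\ref{David12.3} for the Cuntz comparison with $a$. You also correctly identify the tower-compatibility bookkeeping (keeping each level entirely inside or outside the $C_{k,l}$, $T_lC_{k,l}$ and their subdivisions) as the main groupoid-specific difficulty, which is precisely where the paper's argument spends its effort; the only caveat is that the disjointness/invariance parameter must be a separate $\beta$ chosen much smaller than $\epsilon$ (tied to the $K^Q$-invariance and the trace estimate), not $\epsilon$ itself, as the paper does.
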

\begin{proof}
\label{Zstabilitystart}
To simplify notation, denote $G=\Rpunc$ and $X=\Opunc$. 
As per usual, we let $\alpha$ denote the induced action of $G$ on $C(X)$, so that $\alpha_g:C(X)\rightarrow C(r_X^{-1}(r(g)))$ is given by $\alpha_g(f)(x)=f(g^{-1}x)$ for $x\in r_X^{-1}(r(g))$, and we define $\alpha_g(f)(x)=0$ if $r_X(x)\neq r(g)$. 

Let $n\in\N$, $\epsilon>0$, and let $a\in C(X)\rtimes_r G$ be a nonzero positive element. Let $F\subset C(X)\rtimes_r G$ be a finite set. We will show that there exists a map $\phi:M_n\rightarrow C(X)\rtimes_r G$ as in Theorem~\ref{Zstable}, from which we will be able to conclude that $C(X)\rtimes_r G$ is $\ZZ$-stable.  

Notice that, since $\Gamma_c(G,r^*\AA)$ is dense in $A\rtimes_r G$, it will be enough to consider finite subsets $F\subset\Gamma_c(G,r^*\AA)$. Each such subset is generated by a finite subset $\Upsilon$ of the unit ball of $C(X)$
 and a compact subset $K\subset G$ which, without loss of generality, we may assume contains $G^{(0)}$ and is closed under taking inverses. To see that $\Upsilon$ can be chosen to be finite, fix any $f=\sum_{g\in C}f_gU_g\in F$ and observe that, since $C$ is compact and $G$ is ample, $C$ admits a finite cover of pairwise disjoint 
compact open bisections $\{B_1,\ldots,B_N\}$. Then $f$ is generated by $\{U_g\mid g\in C\}$ together with the finite collection of functions $\{\sum_{g\in B_i}f_g\}_{i=1,\ldots,N}$, each of which is in $C(X)$ by the argument following the proof of Lemma~\ref{gpoidconditionalexp}. Since $C(X)\rtimes_r\Rpunc$ is generated by sets of this form, it will be enough to check condition (ii) in Theorem~\ref{Zstable} for each element $w\in\Upsilon$ and for $w=\sum_{g\in K}U_g$. 

Since $K$ is a compact subset of $\Rpunc$, there exists a real number $R_K>0$ such that if $(u-z,u)\in K$ (for some $u\in\Opunc$ and $z\in\R^d$) then $|z|<R_K$. By enlarging $K$ we may further assume, without loss of generality, that $R_K^{-1}<r/2$, where $r$ is the constant of uniform discreteness for the puncture set in the tiling. This ensures that whenever $u,u'\in\Opunc$ have $d(u,u')<R_K^{-1}$ in the tiling metric, the subsets $Ku$ and $Ku'\subset\Rpunc$ consist of translations by the same set of vectors in $\R^d$. We enlarge $K$ one last time to assume that $K=\{(u-z,u)\in\Rpunc\mid |z|<R_K\}$. Reasoning as for the sets $T_n$ in Example~\ref{tilingTset}, this choice of $K$ is seen to be compact.

By Lemma~\ref{Phillips2}, we may assume that $a\in C(X)$. Then, since $a$ is a nonzero positive element, there exists $x_0\in X$ and $\theta>0$ such that $a$ is strictly positive on the closed ball of radius $2\theta$ centred at $x_0$. Thus, we may assume that $a$ is a $\lbrack0,1\rbrack$-valued function which takes the value $1$ on all points within distance $2\theta$ from $x_0$, and the value $0$ at all points at  distance at least $3\theta$ from $x_0$. Denote by $O$ the open ball of radius $\theta$ centred at $x_0$. 
Because the action is minimal, there exists a subset $D\subset G$ such that $D^{-1}\cdot O=X$ and so that there exists $m\in\N$ such that, for each $u\in G^{(0)}$, $|D\cap r^{-1}(u)|<m$ and $|D\cap s^{-1}(u)|<m$. To see this, since $G$ is \'etale it has a base of open bisections $\{B_i\}_{i\in I}$.  For each $i\in I$, the set $B_iO$ is open in $X$ by Lemma~\ref{actiontopology}, because $B_i$ is open in $G$ and $O$ is open in $X$. Because $B_i$ is a bisection, for each point $x\in O$, there exists at most one $g\in B_i$ such that $s(g)=r_X^{-1}(x)$, and therefore at most one image $gx\in B_iO$. Since the action is minimal, the collection $\{B_iO\}_{i\in I}$ is an open cover of $X$ and, since $X$ is compact, there exists a finite subcover $\{B_1O,\ldots,B_mO\}$. Let $D=\bigcup_{j=1}^m B_j^{-1}$, which has the properties we seek.

Let $0<\kappa<1$, to be determined on page~\pageref{kappadetermined}. Choose an integer 
\[
Q>(n^2\sup_{u\in G^{(0)}}|uK|)/\epsilon.
\]
Denote by $K^Q$ the product of $K$ with itself $Q$ times in $G$:
\[K^Q\coloneqq\{k_Q\cdots k_1\mid (k_{i+1},k_i)\in K^{(2)}\ \textnormal{ for every }  i=1,\ldots,Q-1\}.\]
Take $\beta>0$ to be small enough so that if $T$ is a nonempty compact subset of $G$ which is sufficiently invariant under left-translation by $K^Q$, then, for every $T'\subset T$ with $|T'u|\geq(1-n\beta)|Tu|$ at every $u\in G^{(0)}$, we have, for every $u\in G^{(0)}$, that
\begin{equation}\label{largesetinvariance}
\left|\bigcap_{g\in K^Q}\left(gT'\sqcup\bigsqcup_{v\in G^{(0)}\setminus r(g)}vT'\right)u\right|\geq(1-\kappa)|Tu|.
\end{equation}

To ease thinking, note that since $G^{(0)}\subset K$ and $K$ is closed under taking inverses, the left-hand set is the subset consisting of $t\in T'u$ such that $K^Qt\subset T'u$. Choose $L\in\N$ large enough so that $(1-\beta/2)^L<\beta$. By Example~\ref{tilingTset}, 
 there exist compact subsets $G^{(0)}\subset T_1\subset\cdots\subset T_L$ of $G$ such that $|\partial_{T_{l-1}}(T_lu)|\leq(\beta^2/8)|T_lu|$ for each $l=2,\ldots,L$ and $u\in G^{(0)}$. 
We may also enlarge each $T_l$ in turn to assume that it is sufficiently invariant under left translation by $K^Q$ so that we can use the previous paragraph to ensure that 
for every subset $T'\subset T_l$ satisfying $|T'u|\geq(1-n\beta)|T_lu|$ for each $u\in G^{(0)}$, we have, for each $u\in G^{(0)}$, that \eqref{largesetinvariance} holds for this choice of $T'$ and with $T$ replaced with $T_l$. 

Since $T_L$ was constructed as in Example~\ref{tilingTset}, there exists $R_L>0$ such that $T_L=\{(u-z,u)\in\Rpunc\mid |z|<R_L\}$. 
By the uniform continuity of the functions in $\Upsilon\cup\Upsilon^2$, and of the action map $T_L\times X\rightarrow X$, there exists $0<\eta<(QR_K+R_L)^{-1}$ so that if $x,y\in X$, and $t,t'\in T_L$ are such that $s(t)=x$ and $s(t')=y$, and satisfy $d(x,y)<\eta$ and $d(t,t')<\eta$, then $|f(tx)-f(t'y)|<\epsilon^2/(4n^4)$. Let $\UU=\{U_1,\ldots,U_M\}$ be an open cover of $X$ whose members all have diameter less than $\eta$. Let $\eta>\eta'>0$ be a Lebesgue number for $\UU$ which is no larger than $\theta$. 

Let $E$ be a compact subset of $G$ containing $T_L$, and let $\delta>0$ be such that $\delta<\beta^2/4$. Since $G_u$ is infinite for each $u\in G^{(0)}$, we may enlarge $E$ and shrink $\delta$ as necessary to ensure that, for each nonempty $(E,\delta)$-invariant compact set $S\subset G$ and each $u\in G^{(0)}$, we have that
\begin{equation}\label{Mnmax}
\beta|Su|\geq Mn\sum_{l=1}^L\max_{v\in G^{(0)}}|T_lv|.
\end{equation}
By combining Lemma~\ref{tilingalmostfinite} and Theorem~\ref{Suzuki Analogue}, we see that the action $G\curvearrowright X$ admits clopen tower decompositions of arbitrary invariance. Therefore, for some $N\in\N$, we can find nonempty clopen sets $W_1,\ldots,W_N\subset X$ and nonempty $(E,\delta)$-invariant compact open sets $S_1,\ldots,S_N\subset G$ such that the family $\{(W_k,S_k)\}_{k=1}^N$ is a clopen tower decomposition of $X$
with levels of diameter less than $\eta'$. Write $S_k=\bigsqcup_{j=1}^{n_k}S_{k,j}$ for the decomposition of each shape. By Remark~\ref{tilingtowersbasissets}, we may assume, without loss of generality, that, for each $k\in\{1,\ldots,N\}$, there exists a patch $P_k$ and a tile $t_k\in P_k$ such that $W_k=U(P_k,t_k)$, and so that, for each $j\in\{1,\ldots,n_k\}$, there exists $t_{k,j}\in P_k$ such that $S_{k,j}=V(P_k,t_k,t_{k,j})$. Note in particular that, for each $l\in\{1,\ldots,L\}$, since $T_l=\{(u-z,u)\in\Rpunc\mid |z|<R_l\}$ for some $0<R_l\leq R_L$, our choice of $\eta'$ ensures that if $x$ and $y$ are in the same tower level, then $T_lx$ and $T_ly$ consist of arrows which implement exactly the same set of vectors of translation.

Let $k\in\{1,\ldots,N\}$. Since $S_k$ is $(T_L,\beta^2/4)$-invariant, and since the sets $T_1,\ldots,T_L$ satisfy the assumptions of Theorem~\ref{groupoidOW}, we can find $C_{k,1},\ldots,C_{k,L}\subset S_k$ such that $\bigcup_{l=1}^LT_lC_{k,l}\subset S_k$, and so that the collection $\{T_lc\mid l\in\{1,\ldots,L\}, c\in C_{k,l}\}$ is $\beta$-disjoint and $(1-\beta)$-covers $S_k$. 
We claim that we may assume that, for each $l\in\{1,\ldots,L\}$ and each $p\in\{1,\ldots,n_k\}$, we have either $S_{k,p}\subset T_lC_{k,l}$ or $S_{k,p}\cap T_lC_{k,l}=\emptyset$. 
That is, $T_lC_{k,l}$ either contains all the groupoid elements corresponding to a given level of the tower, or none of them. We prove this assertion over a number of steps.

To elaborate, recall that $C_{k,L}$ was constructed from the set $I=\{c\in S_k\mid T_Lc\subset S_k\}$. Suppose $c\in S_{k,j}=V(P_k,t_k,t_{k,j})$ is an element of $I$. Then, for each fixed $t\in T_Lr(c)$, $tc\in S_k$, and so, in particular, $tc\in S_{k,i}=V(P_k,t_k,t_{k,i})$ for some $i$. Recall that $T_L$ was the collection of allowable vectors of translation of magnitude smaller than $R_L$. Thus, if $tc\in V(P_k,t_k,t_{k,i})\subset S_k$ for some $c\in S_{k,j}$, then the vector $x(t_{k,i})-x(t_{k,j})$ was associated to $t$, and was allowable in $r(c)\in U(P_k,t_{k,j})$. 
Let $c'$ be any other element of $S_{k,j}$. By the diameter condition on the tower levels, $T_Lr(c')$ consists of arrows which implement the same set of vectors as the arrows in $T_Lr(c)$. By the argument at the start of the paragraph, we see that any such arrow $t'\in T_Lr(c')$ implements the vector $x(t_{k,i})-x(t_{k,j})$ for some $i$. Since $c'\in S_{k,j}=V(P_k,t_k,t_{k,j})$, we see that $t'c'\in V(P_k,t_k,t_{k,i})=S_{k,i}\subset S_k$. This shows that $c'\in I$, so that $S_{k,j}\subset I$. Hence, we have shown that, for each $j$, either $S_{k,j}\cap I=\emptyset$ or $S_{k,j}\subset I$.

To obtain $C_{k,L}$, we applied Lemma~\ref{evendisjoint} to get a subset $C_{k,L}\subset I$ such that the collection $\{T_Lc\mid c\in C_{k,L}\}$ was $\beta$-disjoint. In doing so, we claim that we can choose to either remove or keep each tower level as a whole. 
Indeed, if we fix any $x\in W_k$ and apply Lemma~\ref{evendisjoint} to the collection $\{T_Lcx\mid c\in I\}$, then we obtain $C_{k,L}x\subset Ix$ such that $\{T_Lcx\mid c\in C_{k,L}x\}$ is $\beta$-disjoint. Now, choose any other $y\in W_k=U(P_k,t_k)$. By the previous paragraph, $I$ contains all or none of the elements of each $S_{k,p}$, so, by the structure of the tower $(W_k,S_k)$, $Ix$ and $Iy$ consist of arrows which implement the same vectors of translation. This allows us to choose $C_{k,L}y$ to consist of arrows which implement the same vectors as those in $C_{k,L}x$. Since $I\subset S_k$, if $c,c'\in I$ implement the same vector of translation and have $s(c)=x$ and $s(c')=y$, then $r(c)$ and $r(c')$ are in the same tower level, and thus (as tilings) they agree on $B_{\eta^{-1}}(0)$. Therefore, $T_Lr(c)$ and $T_Lr(c')$ consist of elements which implement the same set of vectors. This shows that $\{T_Lcx\mid c\in C_{k,L}x\}$ and $\{T_Lc'y\mid c'\in C_{k,L}y\}$ implement the same set of vectors. Combining this with the fact that the collection $\{T_Lcx\mid c\in C_{k,L}x\}$ was $\beta$-disjoint, we see that the collection $\{T_Lc'y\mid c'\in C_{k,L}y\}$ is also $\beta$-disjoint. Repeat this construction for each $y\in W_k$, and set $C_{k,L}=\bigsqcup_{y\in W_k}C_{k,L}y$, so that the collection $\{T_Lc\mid c\in C_{k,L}\}$ is $\beta$-disjoint. Observe that by construction, for each $j$, we have either $S_{k,j}\cap C_{k,l}=\emptyset$ or $S_{k,j}\subset C_{k,l}$. 

By construction, since $C_{k,L}\subset I$, we have $T_LC_{k,L}\subset S_k$. Now, suppose that $g\in S_{k,p}=V(P_k,t_k,t_{k,p})$ is such that $g\in T_LC_{k,L}$. Then there exists $c\in C_{k,L}$ such that $c\in S_{k,j}=V(P_k,t_k,t_{k,j})$ for some $j$, and $t\in T_L$ implementing the vector $x(t_{k,p})-x(t_{k,j})$, such that $tc=g$. Choose any other $g'\in S_{k,p}$, and consider the groupoid element $c'$ with source $s(g')$ and range $s(g')+x(t_{k,j})-x(t_k)$. Observe that $c'$ is in $S_{k,j}=V(P_k, t_k, t_{k,j})$, which is a subset of $C_{k,L}$ by the previous paragraph, since $c\in S_{k,j}\cap C_{k,L}$. Since $r(c')$ is in the same tower level as $r(c)=s(t)$, the arrows in $T_Lr(c')$ implement exactly the same vectors as the arrows in $T_Ls(t)$, so, in particular, there is an element $t'\in T_Lr(c')$ which implements the same vector $x(t_{k,p})-x(t_{k,j})$ that $t$ does. Then $t'c'$ implements the vector $x(t_{k,p})-x(t_k)$, to $s(t'c')=s(g')$, so we see that $t'c'=g'$, and hence that $g'\in T_LC_{k,L}$. This shows that, for each $p$, either $S_{k,p}\cap T_LC_{k,L}=\emptyset$, or $S_{k,p}\subset T_LC_{k,L}$. 

We now simply repeat the procedure of the previous three paragraphs for each $l\in\{L-1,\ldots,1\}$ in turn. As in the proof of Lemma~\ref{groupoidOW}, at each step we consider $I=\{c\in S_k\mid T_lc\subset S_k\setminus\bigsqcup_{j=l+1}^LT_jC_{k,j}\}$. Notice that when forming $S_k\setminus\bigsqcup_{j=l+1}^LT_jC_{k,j}$, we have removed entire tower levels from consideration, so at each step we are working with a subtower of $(W_k,S_k)$. 

Since the levels of the tower $(W_k,S_k)$ have diameter less than $\eta'$, which is a Lebesgue number for $\UU$, for each $l\in\{1,\ldots,L\}$ there is a partition
\[C_{k,l}=C_{k,l,1}\sqcup C_{k,l,2}\sqcup\cdots\sqcup C_{k,l,M}\]
such that $cW_k\subset U_m$ whenever $m\in\{1,\ldots,M\}$ and $c\in C_{k,l,m}$. 
In fact, by the diameter condition on the tower levels, for each $j$ such that $S_{k,j}\subset C_{k,l}$, there exists $m$ such that $S_{k,j}W_k\subset U_m$, whence, for each $c\in S_{k,j}$, we may put $c\in C_{k,l,m}$. In this way, we may assume that, for each $j$ and $m$, either $S_{k,j}\cap C_{k,l,m}=\emptyset$ or $S_{k,j}\subset C_{k,l,m}$. In addition, since $T_lC_{k,l,m}\subset S_k$, and making use of the structure of $T_l$ and $S_{k,j}$, it is then automatic that, for each $j$ and $m$, either $S_{k,j}\cap T_lC_{k,l,m}=\emptyset$ or $S_{k,j}\subset T_lC_{k,l,m}$ by following a similar argument as that for $T_LC_{k,L}$ above.

For each $l$ and $m$, choose pairwise disjoint subsets $C_{k,l,m}^{(1)},\ldots,C_{k,l,m}^{(n)}$ of $C_{k,l,m}$ such that, for each $u\in G^{(0)}$ and $i\in\{1,\ldots,n\}$, we have $|C_{k,l,m}^{(i)}u|=\left\lfloor|C_{k,l,m}u|/n\right\rfloor$, where $\lfloor\cdot\rfloor$ denotes the floor function.
Further enforce that, if $S_{k,j}\subset C_{k,l,m}$ and $c\in S_{k,j}$ is included in $C_{k,l,m}^{(i)}$, then $S_{k,j}\subset C_{k,l,m}^{(i)}$. Since the source map is injective on $S_{k,j}$, this amounts to adding a single element to $C_{k,l,m}^{(i)}u$ for each $u\in s(S_{k,j})=W_k$. In this way, each $C_{k,l,m}^{(i)}$ corresponds to a choice of a $\left(1/n\right)$-th of the tower levels associated to $C_{k,l,m}$ (ignoring any remainder), so that, for each $j$, either $S_{k,j}\cap C_{k,l,m}^{(i)}=\emptyset$, or $S_{k,j}\subset C_{k,l,m}^{(i)}$. As in the last paragraph, it is then automatic that either $S_{k,j}\cap T_lC_{k,l,m}^{(i)}=\emptyset$, or $S_{k,j}\subset T_lC_{k,l,m}^{(i)}$ for each $j$. 

For each $i\in\{2,\ldots,n\}$, choose a bijection
\[\Lambda_{k,i}:\bigsqcup_{l,m}C_{k,l,m}^{(1)}\rightarrow\bigsqcup_{l,m}C_{k,l,m}^{(i)}\]
which sends 
$C_{k,l,m}^{(1)}u$ to $C_{k,l,m}^{(i)}u$
for every $l\in\{1,\ldots,L\}$, $m\in\{1,\ldots,M\}$, and every $u\in G^{(0)}$. Further enforce that, whenever $S_{k,j}\subset C_{k,l,m}^{(1)}$, we have, for each $i$, that $\Lambda_{k,i}(S_{k,j})=S_{k,p}$ for some $p$ such that $S_{k,p}\subset C_{k,l,m}^{(i)}$, so that $\Lambda_{k,i}$ sends the set of groupoid elements associated to one particular tower level to the set of groupoid elements associated to some other tower level. 
Also, define $\Lambda_{k,1}$ to be the identity map on $\bigsqcup_{l,m}C_{k,l,m}^{(1)}$, and denote
\[\Lambda_{k,i,j}\coloneqq\Lambda_{k,i}\circ\Lambda_{k,j}^{-1}: \bigsqcup_{l,m}C_{k,l,m}^{(j)}\rightarrow\bigsqcup_{l,m}C_{k,l,m}^{(i)}.\] 

Since the collection $\{T_lc\mid l\in\{1,\ldots,L\},c\in C_{k,l}\}$ is $\beta$-disjoint, for every $l\in\{1,\ldots,L\}$ and $c\in C_{k,l}$, we can find a $T_{k,l,c}\subset T_l$ satisfying $|T_{k,l,c}u|\geq(1-\beta)|T_lu|$ for each $u\in G^{(0)}$, which is such that the collection $\{T_{k,l,c}c\mid l\in\{1,\ldots,L\}, c\in C_{k,l}\}$ is pairwise disjoint. 
Recall that, for each $c\in C_{k,l}$, we have $T_lc\subset S_k\setminus\bigsqcup_{j=l+1}^LT_jC_{k,j}$. Therefore, given $c\in C_{k,l}$ and $c'\in C_{k',l'}$, observe that $T_lc$ and $T_{l'}c'$ can only intersect when $k=k'$ (otherwise the levels of the $k$-th and $k'$-th tower would intersect), and when $l=l'$. Therefore, by following a similar argument as in the construction of $C_{k,L}$, we may perform this construction in such a way that, whenever $S_{k,j}\subset C_{k,l}$ and $c,c'\in S_{k,j}$, we choose $T_{k,l,c}r(c)$ and $T_{k,l,c'}r(c')$ to consist of arrows which implement the same set of vectors.
This shows that, for each $j$, either $S_{k,j}\cap\bigsqcup_{c\in C_{k,l}}T_{k,l,c}c=\emptyset$ or $S_{k,j}\subset\bigsqcup_{c\in C_{k,l}}T_{k,l,c}c$. 
We are also free to choose $T_{k,l,c}u=T_lu$ for every $u\in G^{(0)}\setminus\{r(c)\}$. 

In addition, by construction, for every $c\in C_{k,l,m}^{(j)}$ and every $i\in\{1,\ldots,n\}$, we have $r(c)\in U_m$, and $r(\Lambda_{k,i,j}(c))\in U_m$.
In particular, this means that $d(r(c),r(\Lambda_{k,i,j}(c)))<\eta$ in the tiling metric, so
the tilings $r(c)$ and $r(\Lambda_{k,i,j}(c))$ agree on $B_{\eta^{-1}}(0)$. Therefore, for each $i$, $T_lr(c)$ and $T_lr(\Lambda_{k,i,j}(c))$ consist of groupoid elements which implement exactly the same vectors to $r(c)$ and $r(\Lambda_{k,i,j}(c))$, respectively. 
Consider the set $\widetilde{T_{k,l,c}}$, which we obtain from $T_{k,l,c}$ by removing the arrows which correspond to the same vectors as the arrows which are removed from any set of the form $T_lr(\Lambda_{k,i,j}(c))$ to construct $T_{k,l,\Lambda_{k,i,j}(c)}$. In other words, if an arrow corresponding to translating $r(\Lambda_{k,i,j}(c))$ by some vector is removed from $T_lr(\Lambda_{k,i,j}(c))$ when constructing $T_{k,l,\Lambda_{k,i,j}(c)}$ for any $i$, then the arrow corresponding to translating $r(c)$ by the same vector does not appear in $\widetilde{T_{k,l,c}}$. Since we had $|T_{k,l,\Lambda_{k,i,j}(c)}r(\Lambda_{k,i,j}(c))|\geq(1-\beta)|T_lr(\Lambda_{k,i,j}(c))|$ for each $i\in\{1,\ldots,n\}$, we see that $|\widetilde{T_{k,l,c}}r(c)|\geq(1-n\beta)|T_lr(c)|$. In addition, the sets of vectors of translation associated to the elements of any two of the sets $\widetilde{T_{k,l,\Lambda_{k,i,j}(c)}}r(\Lambda_{k,i,j}(c))$, for $i\in\{1,\ldots,n\}$, are the same. Also, when $u\neq r(c)$, we have $\widetilde{T_{k,l,c}}u=T_lu$. Observe that, whenever $c,c'\in S_{k,j}$, the sets $\widetilde{T_{k,l,c}}r(c)$ and $\widetilde{T_{k,l,c'}}r(c')$ will consist of arrows which implement the same vectors of translation, so that, for each $j$, either $S_{k,j}\cap \bigsqcup_{c\in C_{k,l}}\widetilde{T_{k,l,c}}c=\emptyset$ or $S_{k,j}\subset\bigsqcup_{c\in C_{k,l}}\widetilde{T_{k,l,c}}c$.

Now, for each $j\in\{1,\ldots,n\}$ and each $c\in C_{k,l,m}^{(j)}$, consider the set $T_{k,l,c}'\coloneqq\bigcap_{i=1}^n \widetilde{T_{k,l,\Lambda_{k,i,j}(c)}}$.
Observe that, for each $u\in G^{(0)}$,
\[|T_{k,l,c}'u|\geq(1-n\beta)|T_lu|.\]
In addition, we still have, for each $j$, that whenever $c,c'\in S_{k,j}$, the sets $T_{k,l,c}'r(c)$ and $T_{k,l,c'}r(c')$ implement the same vectors of translation, so that, for each $j$, either $S_{k,j}\cap\bigsqcup_{c\in C_{k,j}}T_{k,l,c}'c=\emptyset$ or $S_{k,j}\subset\bigsqcup_{c\in C_{k,j}}T_{k,l,c}'c$.  
Since $T_{k,l,c}'r(c)$ and $T_{k,l,c}'r(\Lambda_{k,i,j}(c))$ consist of arrows which implement the same vectors for each $i$, given $t\in T_{k,l,c}'r(c)$, we will denote by $t^{(i)}$ the element of $T_{k,l,c}'r(\Lambda_{k,i,j}(c))$ which implements the same vector as $t$.

Set
\[B_{k,l,c,Q}u=\bigcap_{g\in K^Q}\left(gT'_{k,l,c}\sqcup\bigsqcup_{v\in G^{(0)}\setminus r(g)}vT'_{k,l,c}\right),\]
noting that, by our assumption on the invariance of the sets $T_l$, and using \eqref{largesetinvariance}, we have, for each $u\in G^{(0)}$, that $|B_{k,l,c,Q}u|\geq(1-\kappa)|T_lu|$. 
We can alternatively describe $B_{k,l,c,Q}$ as the collection of elements $t\in T'_{k,l,c}$ such that $K^Qt\subset T'_{k,l,c}$. 

Suppose $g\in S_{k,p}$ is such that $g\in\bigcup_{\tilde{c}\in C_{k,l}}B_{k,l,\tilde{c},Q}\tilde{c}$, so that $g=tc$, with $t\in B_{k,l,c,Q}$ for $c\in S_{k,j}\subset C_{k,l}$, say. Given $c'\in S_{k,j}$, define $t'\in T_{k,l,c'}'$ to be the element of $Gr(c')$ which implements the same vector of translation that $t$ does. 
Observe that, since $r(t)$ and $r(t')$ are in the same tower level, they agree on $B_{\eta^{-1}}(0)$, and so $K^Qr(t)$ and $K^Qr(t')$ consist of arrows implementing the same vectors of translation. 
Since $t\in B_{k,l,c,Q}$, 
it follows that $t'\in B_{k,l,c',Q}$, and therefore $t'c'\in \bigsqcup_{\tilde{c}\in C_{k,l}}B_{k,l,\tilde{c},Q}\tilde{c}$. Since $t'c'$ was an arbitrary element of $S_{k,p}$, this shows that either $S_{k,p}\cap\bigsqcup_{\tilde{c}\in C_{k,l}}B_{k,l,\tilde{c},Q}\tilde{c}=\emptyset$, or $S_{k,p}\subset\bigsqcup_{\tilde{c}\in C_{k,l}}B_{k,l,\tilde{c},Q}\tilde{c}$.

For each $q\in\{0,\ldots,Q-1\}$, set
\[B_{k,l,c,q}=K^{Q-q}B_{k,l,c,Q}\setminus K^{Q-q-1}B_{k,l,c,Q}.\]
Since $G^{(0)}\subset K$, we have, for each $q\in\{0,\ldots,Q\}$, that $B_{k,l,c,q}\subset (G^{(0)})^q(K^{Q-q}B_{k,l,c,Q}\setminus K^{Q-q-1}B_{k,l,c,Q})\subset K^QB_{k,l,c,Q}$, so these sets partition $K^QB_{k,l,c,Q}$.
In addition, we have $B_{k,l,c,Q}\subset (G^{(0)})^QB_{k,l,c,Q}\subset K^QB_{k,l,c,Q}$, so that, for each $u\in G^{(0)}$, $|K^QB_{k,l,c,Q}u|\geq|B_{k,l,c,Q}u|\geq(1-\kappa)|T_lu|$.

Since $K^QB_{k,l,c,Q}$ is a subset of $T_{k,l,c}'$, we have $K^QB_{k,l,c,Q}c\subset S_k$. Also, since $K^qx$ and $K^qy$ consist of groupoid elements which implement the same vectors of translation whenever $q\in\{1,\ldots,Q\}$ and $d(x,y)<\eta$, and since we have either $S_{k,p}\subset \bigsqcup_{c\in C_{k,l}}B_{k,l,c,Q}c$ or $S_{k,p}\cap\bigsqcup_{c\in C_{k,l}}B_{k,l,c,Q}c=\emptyset$, we see that, for each $k,l,q$ and $j$, we have either $S_{k,j}\subset \bigsqcup_{c\in C_{k,l}}B_{k,l,c,q}c$ or $S_{k,j}\cap\bigsqcup_{c\in C_{k,l}}B_{k,l,c,q}c=\emptyset$. In other words, we may ensure that all the elements of $S_k$ which correspond to any one clopen tower level are all associated to the same $q$.  

We claim that, for each $q\in\{1,\ldots,Q\}$ and $i\in\{1,\ldots,n\}$, whenever $t\in B_{k,l,c,q}$ we have $t^{(i)}\in B_{k,l,c,q}$ as well. Suppose that $s(t)=r(\Lambda_{k,r,j}(c))$, and observe that $s(t^{(i)})=r(\Lambda_{k,i,j}(c))$. By construction, $s(t)$ and $s(t^{(i)})$ are in the same $U_m$, and so the distance between them is smaller than $\eta$, so these tilings agree on $B_{QR_K+R_L}$. By construction, this means that the groupoid elements in $K^QT_L$ implement the same set of translations at both of these tilings. We assumed that $t\in B_{k,l,c,q}$, so we can find $b\in B_{k,l,c,Q}$ and $g\in K^{Q-q}$ such that $gb\notin K^{Q-q-1}B_{k,l,c,Q}$, and such that $t=gb$. By construction, since $gb\in K^QT_L$, the vector which implements $b$ is also allowable at $s(t^{(i)})$, and is implemented by $\tilde{b}\in T_L$, 
say, and then the vector which implements $g$ at $r(b)$ is also allowable at $r(\tilde b)$, and is implemented by $\tilde g\in\Rpunc$, say. Since $r(b)$ and $r(\tilde{b})$ agree on $B_{QR_K}(0)$, and since $g\in K^{Q-q}$, it follows from our choice of $K$ that $\tilde{g}\in K^{Q-q}$.
Next, we show that $\tilde b\in B_{k,l,c,Q}$. To do so, we prove that $K^Q\tilde b\subset T'_{k,l,c}$. Indeed, we assumed that $b\in B_{k,l,c,Q}$, so we know that $K^Qb\subset T'_{k,l,c}$. Since $r(b)$ and $r(\tilde b)$ agree on $B_{QR_K}(0)$, we know that $K^Qr(b)$ and $K^Qr(\tilde b)$ consist of the same vectors of translation. By construction of $T'_{k,l,c}$, since $s(b)$ and $s(\tilde b)$ agree on $B_{QR_K+R_L}(0)$, the sets $T'_{k,l,c}s(b)$ and $T'_{k,l,c}s(\tilde b)$ also implement the same set of vectors. Thus, since $b$ and $\tilde{b}$ implemented the same vector, we see that, whenever $k\in K^Qr(b)$ and $\tilde k\in K^Qr(\tilde b)$ implement the same translation, we have $kb\in T'_{k,l,c}$ if and only if $\tilde k\tilde b\in T'_{k,l,c}$. Since $K^Qb\subset T'_{k,l,c}$ by assumption, this shows that $K^Q\tilde b\subset T'_{k,l,c}$, so $\tilde b\in B_{k,l,c,Q}$, as required. Finally, we must show that $\tilde{g}\tilde{b}\notin K^{Q-q-1}B_{k,l,c,Q}$. This follows because $\tilde{g}\tilde{b}$ implements the same vector as $gb$, which was not an element of $K^{Q-q-1}B_{k,l,c,Q}$, and the elements of the sets $K^{Q-q-1}B_{k,l,c,Q}s(b)$ and $K^{Q-q-1}B_{k,l,c,Q}s(\tilde{b})$ are implemented by precisely the same set of vectors.

Given $g\in K$, it is clear that
\begin{equation}\label{BQ}
gB_{k,l,c,Q}\subset KB_{k,l,c,Q}\subset B_{k,l,c,Q-1}\cup B_{k,l,c,Q}.
\end{equation}
For $q\in\{1,\ldots,Q-1\}$, we have
\begin{equation}\label{Bq}
gB_{k,l,c,q}\subset B_{k,l,c,q-1}\cup B_{k,l,c,q}\cup B_{k,l,c,q+1},
\end{equation}
because it is clear that $gB_{k,l,c,q}\subset K^{Q-q+1}B_{k,l,c,Q}=\bigsqcup_{j=q-1}^QB_{k,l,c,j}$, 
while, given $h\in B_{k,l,c,q}$, if we had $gh\in K^{Q-q-2}B_{k,l,c,Q}=\bigsqcup_{j=q+2}^QB_{k,l,c,j}$
then the closure of $K$ under inverses would provide $h\in g^{-1}K^ {Q-q-2}B_{k,l,c,Q}\subset KK^{Q-q-2}B_{k,l,c,Q}=K^{Q-q-1}B_{k,l,c,Q}$, 
which contradicts the membership of $h$ in $B_{k,l,c,q}$. 

We obtain a $*$-homomorphism $\psi:M_n\rightarrow C(X)\rtimes_r G$ by defining it on the standard matrix units $\{e_{ij}\}_{i,j=1}^n$ of $M_n$ by
\[\psi(e_{ij})=\sum_{k=1}^N\sum_{l=1}^L\sum_{m=1}^M\sum_{c\in C_{k,l,m}^{(j)}}\sum_{q=1}^Q\sum_{t\in B_{k,l,c,q}r(c)}
U_{t^{(i)}\Lambda_{k,i,j}(c)c^{-1}t^{-1}}1_{tcW_k}\]
where $t^{(i)}\in T_{k,l,\Lambda_{k,i,j}(c)}r(\Lambda_{k,i,j}(c))$ is the element constructed earlier, and extending linearly.

For each $k\in\{1,\ldots,N\}$, let 
$h_k:X\rightarrow\lbrack0,1\rbrack$ denote the indicator function for the clopen set $W_k$, and note that, since $W_k$ is clopen, $h_k\in C(X)$. 
For each $k\in\{1,\ldots,N\}$, $l\in\{1,\ldots,L\}$, $m\in\{1,\ldots,M\}$, $i,j\in\{1,\ldots,n\}$ and $c\in C_{k,l,m}^{(j)}$, we set
\begin{align*}
h_{k,l,c,i,j}&=\sum_{q=1}^Q\sum_{t\in B_{k,l,c,q}r(c)}
\frac{q}{Q}U_{t^{(i)}\Lambda_{k,i,j}(c)c^{-1}t^{-1}}\alpha_{tc}(h_k)\\
&=\sum_{q=1}^Q\sum_{t\in B_{k,l,c,q}r(c)}\frac{q}{Q}U_{t^{(i)}\Lambda_{k,i,j}(c)c^{-1}t^{-1}}1_{tcW_k}.
\end{align*}
Define a linear map $\phi:M_n\rightarrow C(X)\rtimes_r G$ by setting
\[\phi(e_{ij})=\sum_{k=1}^N\sum_{l=1}^L\sum_{m=1}^M\sum_{c\in C_{k,l,m}^{(j)}}h_{k,l,c,i,j}\]
and extending linearly. \label{endsetup} Set
\[h=\sum_{k=1}^N\sum_{l=1}^L\sum_{m=1}^M\sum_{i=1}^n\sum_{c\in C_{k,l,m}^{(i)}}h_{k,l,c,i,i}\]
so that $h:X\rightarrow\lbrack0,1\rbrack$ is a continuous function.
We check that $h$ commutes with the image of $\psi$, and that
\[\phi(b)=h\psi(b)\]
for every $b\in M_n$, which will show that $\phi$ is an order-zero c.p.c map. 

We have
\begin{align*}
h\psi(e_{ij})&=\left(\sum_{k=1}^N\sum_{l=1}^L\sum_{m=1}^M\sum_{r=1}^n\sum_{c\in C_{k,l,m}^{(r)}}\sum_{q=1}^Q\sum_{t\in B_{k,l,c,q}r(c)}\frac{q}{Q}U_{t^{(r)}\Lambda_{k,r,r}(c)c^{-1}t^{-1}}1_{tcW_k}\right)\\&\hspace{10mm}\left(\sum_{\tilde k=1}^N\sum_{\tilde l=1}^L\sum_{\tilde m=1}^M\sum_{\tilde c\in C_{\tilde k,\tilde l,\tilde m}^{(j)}}\sum_{\tilde q=1}^Q\sum_{\tilde t\in B_{\tilde k,\tilde l,\tilde c,\tilde q}r(\tilde c)}U_{\tilde t^{(i)}\Lambda_{\tilde k,i,j}(\tilde c)\tilde c^{-1}\tilde t^{-1}}1_{\tilde t\tilde cW_{\tilde k}}\right)\\
&=\sum_{k,\tilde k=1}^N\sum_{l,\tilde l=1}^L\sum_{m, \tilde m=1}^M\sum_{r=1}^n\sum_{c\in C_{k,l,m}^{(r)}}\sum_{\tilde c\in C_{\tilde k, \tilde l,\tilde m}^{(j)}}\\&\hspace{10mm}\sum_{\tilde q=1}^Q\sum_{\tilde t\in B_{\tilde k,\tilde l,\tilde c,\tilde q}r(\tilde c)}\sum_{q=1}^Q\sum_{t\in B_{k,l,c,q}r(c)}\frac{q}{Q}U_{t^{(r)}t^{-1}}1_{tcW_k}U_{\tilde t^{(i)}\Lambda_{\tilde k,i,j}(\tilde c)\tilde c^{-1}\tilde t^{-1}}1_{\tilde t\tilde cW_{\tilde k}}\\
&=\sum_{k,\tilde k=1}^N\sum_{l,\tilde l=1}^L\sum_{m, \tilde m=1}^M\sum_{r=1}^n\sum_{c\in C_{k,l,m}^{(r)}}\sum_{\tilde c\in C_{\tilde k, \tilde l,\tilde m}^{(j)}}\\&\hspace{10mm}\sum_{\tilde q=1}^Q\sum_{\tilde t\in B_{\tilde k,\tilde l,\tilde c,\tilde q}r(\tilde c)}\sum_{q=1}^Q\sum_{t\in B_{k,l,c,q}r(c)}\frac{q}{Q}U_{t^{(r)}t^{-1}}1_{tcW_k}1_{\tilde t^{(i)}\Lambda_{\tilde k,i,j}(\tilde c)W_{\tilde k}}U_{\tilde t^{(i)}\Lambda_{\tilde k,i,j}(\tilde c)\tilde c^{-1}\tilde t^{-1}}
\end{align*}
Notice that, since $t$ is associated to $c\in C_{k,l,m}^{(r)}$, we have $t^{(r)}=t$. Indeed, let $t=(r(c)-z,r(c))$, so that $t^{(r)}=(r(\Lambda_{k,r,r}(c))-z, r(\Lambda_{k,r,r}(c)))=(r(c)-z,r(c))=t$.
Now, we have $c\in C_{k,l,m}^{(r)}\subset C_{k,l}$ and $t\in T_{k,l,c}'\subset T_{k,l,c}\subset T_l$, and similarly $\Lambda_{\tilde k,i,j}(\tilde c)\in C_{\tilde k,\tilde l}$ and $\tilde t^{(i)}\in T_{\tilde l}$, so that $tc\in S_k$ and $\tilde t^{(i)}\Lambda_{\tilde k,i,j}(\tilde c)\in S_{\tilde k}$, so for the supports of the indicator functions in the above sum to intersect, and hence the corresponding term to be nonzero, we must have $k=\tilde k$ (because levels of different towers are disjoint).  
Since $tc$ and $\tilde t^{(i)}\Lambda_{k,i,j}(\tilde c)$ are both elements of $S_k$, for the indicators accociated to them to intersect, we must have $tc=\tilde t^{(i)}\Lambda_{k,i,j}(\tilde c)$. But $tc\in T_{k,l,c}c$, and $\tilde t^{(i)}\Lambda_{k,i,j}(\tilde c)\in T_{k,\tilde l,\Lambda_{k,i,j}(\tilde c)}\Lambda_{k,i,j}(\tilde c)$, and these sets are disjoint unless $l=\tilde l$ and $c=\Lambda_{k,i,j}(\tilde c)$ (which implies also that $m=\tilde m$ and $r=i$).
Combining this with the fact that $tc=\tilde t^{(i)}\Lambda_{k,i,j}(\tilde c)=\tilde t^{(i)}c$ shows that $t=\tilde t^{(i)}$, and hence that $q=\tilde q$. 
Therefore, we only need to sum over $k,l,m$, $\tilde c\in C^{(j)}_{k,l,m}$, 
and $\tilde q$ and $\tilde t\in B_{k,l,\tilde c,\tilde q}r(\tilde c)$
 (because knowing $\tilde c$ and $\tilde t$ allows us to determine $c$ and $t$). Putting this all together yields
\begin{align*}
h\psi(e_{ij})&=\sum_{k=1}^N\sum_{l=1}^L\sum_{m=1}^M\sum_{\tilde{c}\in C_{k,l,m}^{(j)}}\sum_{\tilde q=1}^Q\sum_{\tilde t\in B_{k,l,\tilde c,\tilde q}r(\tilde c)}\frac{\tilde q}{Q}U_{\tilde t^{(i)}(\tilde t^{(i)})^{-1}}1_{\tilde t^{(i)}\Lambda_{k,i,j}(\tilde c)W_{k}}U_{\tilde t^{(i)}\Lambda_{k,i,j}(\tilde c)\tilde c^{-1}\tilde t^{-1}}\\
&=\sum_{k=1}^N\sum_{l=1}^L\sum_{m=1}^M\sum_{\tilde c\in C_{k,l,m}^{(j)}}\sum_{\tilde q=1}^Q\sum_{\tilde t\in B_{k,l,\tilde c,\tilde q}r(\tilde c)}\frac{\tilde{q}}{Q}U_{r(\tilde t^{(i)})}1_{\tilde t^{(i)}\Lambda_{k,i,j}(\tilde c)W_{k}}U_{\tilde t^{(i)}\Lambda_{k,i,j}(\tilde c)\tilde{c}^{-1}\tilde t^{-1}}\\
&=\sum_{k=1}^N\sum_{l=1}^L\sum_{m=1}^M\sum_{\tilde c\in C_{k,l,m}^{(j)}}\sum_{\tilde q=1}^Q\sum_{\tilde t\in B_{k,l,\tilde c,\tilde q}r(\tilde c)}\frac{\tilde q}{Q}U_{\tilde t^{(i)}\Lambda_{k,i,j}(\tilde c)\tilde{c}^{-1}\tilde t^{-1}}1_{\tilde t\tilde cW_k}\\
&=\phi(e_{ij}).
\end{align*}
Similarly, we compute that
\begin{align*}
\psi(e_{ij})h&=\left(\sum_{\tilde k=1}^N\sum_{\tilde l=1}^L\sum_{\tilde m=1}^M\sum_{\tilde c\in C_{\tilde k,\tilde l,\tilde m}^{(j)}}\sum_{\tilde q=1}^Q\sum_{\tilde t\in B_{\tilde k,\tilde l,\tilde c,\tilde q}r(\tilde c)}U_{\tilde t^{(i)}\Lambda_{\tilde k,i,j}(\tilde c)\tilde c^{-1}\tilde t^{-1}}1_{\tilde t\tilde cW_{\tilde k}}\right)\\&\hspace{10mm}\left(\sum_{k=1}^N\sum_{l=1}^L\sum_{m=1}^M\sum_{r=1}^n\sum_{c\in C_{k,l,m}^{(r)}}\sum_{q=1}^Q\sum_{t\in B_{k,l,c,q}r(c)}\frac{q}{Q}U_{t^{(r)}\Lambda_{k,r,r}(c)c^{-1}t^{-1}}1_{tcW_k}\right)&\\
&=\sum_{k,\tilde k=1}^N\sum_{l,\tilde l=1}^L\sum_{m, \tilde m=1}^M\sum_{r=1}^n\sum_{c\in C_{k,l,m}^{(r)}}\sum_{\tilde c\in C_{\tilde k, \tilde l,\tilde m}^{(j)}}\\&\hspace{10mm}\sum_{\tilde q=1}^Q\sum_{\tilde t\in B_{\tilde k,\tilde l,\tilde c,\tilde q}r(\tilde c)}\sum_{q=1}^Q\sum_{t\in B_{k,l,c,q}r(c)}\frac{q}{Q}U_{\tilde t^{(i)}\Lambda_{\tilde k,i,j}(\tilde c)\tilde c^{-1}\tilde t^{-1}}1_{\tilde t\tilde cW_{\tilde k}}U_{r(t)}1_{tcW_k}.\\
\end{align*}
This time, the only nonzero contribution occurs when $k=\tilde k$ and when $\tilde t\tilde c=tc$. Using similar arguments as above, this forces $c=\tilde c$ and $t=\tilde t$, and hence $q=\tilde q$, $l=\tilde l$, $m=\tilde m$, and $r=j$. Thus, the sum above becomes
\begin{align*}
\psi(e_{ij})h&=\sum_{k=1}^N\sum_{l=1}^L\sum_{m=1}^M\sum_{c\in C_{k,l,m}^{(j)}}\sum_{q=1}^Q\sum_{t\in B_{k,l,c,q}r(c)}\frac{q}{Q}U_{t^{(i)}\Lambda_{k,i,j}(c)c^{-1}t^{-1}}1_{tcW_k}\\
&=\phi(e_{ij}).
\end{align*} 

Next, we verify condition (ii) in Theorem~\ref{Zstable} for the element $w=\sum_{g\in K}U_g$. Let $1\leq i,j\leq n$. Using the fact that $K$ is closed under taking inverses in the sum over $K$ in the second term, we have
\begin{align*}
wh_{k,l,c,i,j}-h_{k,l,c,i,j}w&=\sum_{q=1}^Q\sum_{t\in B_{k,l,c,q}r(c)}\sum_{g\in Kr(t^{(i)})}\frac{q}{Q}U_{gt^{(i)}\Lambda_{k,i,j}(c)c^{-1}t^{-1}}1_{tcW_k}\\
&\hspace{10mm}-\sum_{\tilde q=1}^Q\sum_{\tilde t\in B_{k,l,c,\tilde q}r(c)}\sum_{\tilde g\in Kr(\tilde t)}\frac{\tilde q}{Q}U_{\tilde t^{(i)}\Lambda_{k,i,j}(c)c^{-1}\tilde t^{-1}}1_{\tilde tcW_k}U_{\tilde g^{-1}}\\
=\sum_{q,\tilde q=1}^Q\sum_{\substack{t\in B_{k,l,c,q}r(c)\\\tilde t\in B_{k,l,c,\tilde q}r(c)}}\sum_{\substack{g\in Kr(t^{(i)})\\\tilde g\in Kr(\tilde t)}}&\frac{q}{Q} U_{gt^{(i)}\Lambda_{k,i,j}(c)c^{-1}t^{-1}}1_{tcW_k}-\frac{\tilde q}{Q}U_{\tilde t^{(i)}\Lambda_{k,i,j}(c)c^{-1}(\tilde g\tilde t)^{-1}}1_{\tilde g\tilde tcW_k}.
\end{align*}
In view of \eqref{BQ} and \eqref{Bq}, we may pair up the elements in the first and second terms as follows. Because $gt^{(i)}\in B_{k,l,c,q+a}$ for $a\in\{-1,0,1\}$, we want to associate the element $\frac{q}{Q} U_{gt^{(i)}\Lambda_{k,i,j}(c)c^{-1}t^{-1}}1_{tcW_k}$ to an element corresponding to $\tilde q=q+a$. To find an appropriate $\tilde g$ and $\tilde t$, observe that by construction there is an element $h$ of $K$ with source equal to $r(t)$ which implements the same vector of translation as $g$. Set $\tilde t=ht$, noting that by our construction, $\tilde t\in B_{k,l,c,q+a}$, and set $\tilde g=h^{-1}$ so that $\tilde g\tilde t=t$. Observe that by construction, we obtain $\tilde t^{(i)}=gt^{(i)}$, so the term of the sum which corresponds to these choices of $g,\tilde g$, $q,\tilde q$ and $t,\tilde t$ becomes
\[\frac{-a}{Q}U_{gt^{(i)}\Lambda_{k,i.j}(c)c^{-1}t^{-1}}1_{tcW_k}=\frac{-a}{Q}1_{gt^{(i)}\Lambda_{k,i,j}(c)W_k}U_{gt^{(i)}\Lambda_{k,i.j}(c)c^{-1}t^{-1}},\]
and the norm of this element is no larger than $1/Q<\epsilon/(n^2\sup_{u\in G^{(0)}}|uK|)$. 
Now, all of the associated indicator functions are supported on tower levels $gt^{(i)}\Lambda_{k,i,j}(c)W_k$, and the only time two of these levels can intersect is if they are associated to two different $g\in K$ with the same range. Thus, we obtain
\[\|wh_{k,l,c,i,j}-h_{k,l,c,i,j}w\|\leq \frac{\sup_{u\in G^{(0)}}|uK|}{Q}<\frac{\epsilon}{n^2}.\]

All of the associated indicator functions in sight here are supported in $K^QB_{k,l,c,Q}cW_k$. Since these sets are pairwise disjoint for distinct $k$, $l$, and $c$, we obtain

\[\|w\phi(e_{ij})-\phi(e_{ij})w\|=\sup_{k,l,c}\|wh_{k,l,c,i,j}-h_{k,l,c,i,j}w\|\leq\frac{\epsilon}{n^2}.\]
Hence, for any norm-one $b=(b_{ij})\in M_n$, we have
\begin{align*}
\|\lbrack w,\phi(b)\rbrack\|&=\|w\phi(b)-\phi(b)w\|\\
&\leq\sum_{i,j=1}^n\|w\phi(b_{ij})-\phi(b_{ij})w\|\leq n^2\cdot\frac{\epsilon}{n^2}=\epsilon. 
\end{align*}

Next, we check condition (ii) in Theorem~\ref{Zstable} for the functions in $\Upsilon$. Let $i,j\in\{1,\ldots,n\}$ and $f\in\Upsilon\cup\Upsilon^2$. Let $k\in\{1,\ldots,N\}$ and $l\in\{1,\ldots,L\}$. Let $c\in C_{k,l,m}^{(j)}$.
Since $k,i,j$ are fixed in the calculation below, we write $\Lambda=\Lambda_{k,i,j}$ for short.
\begin{align*}
h_{k,l,c,i,j}^*fh_{k,l,c,i,j}&=\left(\sum_{q=1}^Q\sum_{t\in B_{k,l,c,q}r(c)}\frac{q}{Q}U_{t^{(i)}\Lambda_{k,i,j}(c)c^{-1}t^{-1}}\alpha_{tc}(1_{W_k})\right)^*f\\ &\hspace{10mm}\left(\sum_{\tilde{q}=1}^Q\sum_{\tilde{t}\in B_{k,l,c,\tilde{q}}r(c)}\frac{\tilde q}{Q}U_{\tilde{t}^{(i)}\Lambda_{k,i,j}(c)c^{-1}\tilde{t}^{-1}}\alpha_{\tilde{t}c}(1_{W_k})\right)\\
&=\sum_{q, \tilde q=1}^Q\sum_{t\in B_{k,l,c,q}r(c)}\sum_{\tilde t\in B_{k,l,c,\tilde{q}}r(c)}\frac{q\tilde q}{Q^2}1_{tcW_k}U_{tc\Lambda(c)^{-1}(t^{(i)})^{-1}}fU_{\tilde t^{(i)}\Lambda(c)c^{-1}\tilde t^{-1}}1_{\tilde tcW_k}.
\end{align*}
To simplify the notation, let $g_t=tc$ and $h_t=t^{(i)}\Lambda(c)$. Then we have
\begin{align*}
h_{k,l,c,i,j}^*fh_{k,l,c,i,j}&=\sum_{q,\tilde q=1}^Q\sum_{t\in B_{k,l,c,q}r(c)}\sum_{\tilde t\in B_{k,l,c,\tilde{q}}r(c)}\frac{q\tilde q}{Q^2}1_{g_tW_k}\alpha_{g_th_t^{-1}}(f)U_{g_th_t^{-1}h_{\tilde t}g_{\tilde{t}}}1_{g_{\tilde t}W_k}\\
&=\sum_{q,\tilde q=1}^Q\sum_{t\in B_{k,l,c,q}r(c)}\sum_{\tilde t\in B_{k,l,c,\tilde{q}}r(c)}\frac{q\tilde q}{Q^2}\alpha_{g_th_{t}^{-1}}(f)1_{g_tW_k}1_{g_th_t^{-1}h_{\tilde t}W_k}U_{g_th_t^{-1}h_{\tilde t}g_{\tilde t}^{-1}}.\end{align*}
Extracting the important part, this term is associated to the product of indicator functions
\[1_{tcW_k}1_{tc\Lambda(c)^{-1}(t^{(i)})^{-1}\tilde t^{(i)}\Lambda(c)W_k}.\]
The groupoid element $(t^{(i)})^{-1}\tilde t^{(i)}$ is only defined when $r(t^{(i)})=r(\tilde t^{(i)})$. We already know that $s(t^{(i)})=s(\tilde t^{(i)})=r(\Lambda_{k,i,j}(c))$, so, by principality of $\Rpunc$, this element is only defined when $t^{(i)}=\tilde t^{(i)}$. Since $t^{(i)}$ and $\tilde t^{(i)}$ are associated to the same vectors in $r(\Lambda_{k,i,j}(c))$ as $t$ and $\tilde t$ are in $c$, this shows that the only contributions to the sum occur when $t=\tilde t$, and hence when $q=\tilde q$. Thus, we obtain
\begin{equation}\label{h*fh}
h_{k,l,c,i,j}^*fh_{k,l,c,i,j}=\sum_{q=1}^Q\sum_{t\in B_{k,l,c,q}r(c)}\frac{q^2}{Q^2}\alpha_{tc\Lambda_{k,i,j}(c)^{-1}(t^{(i)})^{-1}}(f)1_{tcW_k}.
\end{equation}

Similarly, we obtain
\begin{align*}
fh_{k,l,c,i,j}^*&h_{k,l,c,i,j}=f\left(\sum_{q=1}^Q\sum_{t\in B_{k,l,c,q}r(c)}\frac{q}{Q}U_{t^{(i)}\Lambda_{k,i,j}(c)c^{-1}t^{-1}}\alpha_{tc}(1_{W_k})\right)^*\\ &\hspace{24mm}\left(\sum_{\tilde{q}=1}^Q\sum_{\tilde{t}\in B_{k,l,c,\tilde{q}}r(c)}\frac{\tilde q}{Q}U_{\tilde{t}^{(i)}\Lambda_{k,i,j}(c)c^{-1}\tilde{t}^{-1}}\alpha_{\tilde{t}c}(1_{W_k})\right)\\
&=\sum_{q, \tilde q=1}^Q\sum_{t\in B_{k,l,c,q}r(c)}\sum_{\tilde t\in B_{k,l,c,\tilde{q}}r(c)}\frac{q\tilde q}{Q^2}f1_{tcW_k}U_{tc\Lambda_{k,i,j}(c)^{-1}(t^{(i)})^{-1}\tilde t^{(i)}\Lambda_{k,i,j}(c)c^{-1}\tilde t^{-1}}1_{\tilde tcW_k}.
\end{align*}
As before, this term is only defined when $((t^{(i)})^{-1},\tilde t^{(i)})\in G^{(2)}$, which forces $t=\tilde t$ and $q=\tilde q$, whence we obtain
\begin{equation}\label{fh*h}
fh_{k,l,c,i,j}^*h_{k,l,c,i,j}=\sum_{q=1}^Q\sum_{t\in B_{k,l,c,q}r(c)}\frac{q^2}{Q^2}f1_{tcW_k}.
\end{equation}

Now, let $x\in W_k$. By definition of $C_{k,l,m}$ and $\Lambda_{k,i,j}$, both $\Lambda_{k,i,j}(c)x$ and $cx$ belong to $U_m$, which had diameter less than $\eta$. In addition, we have $d(t,t^{(i)})<\eta$ for all $t$ and $i$. Therefore, by the definition of $\eta$, we obtain that
\[|f(t^{(i)}\Lambda_{k,i,j}(c)x)-f(tcx)|<\frac{\epsilon^2}{4n^4}\]
and hence
\begin{align*}
\sup_{y\in tcW_k}&\left|\left(\alpha_{tc\Lambda_{k,i,j}(c)^{-1}(t^{(i)})^{-1}}(f)-f\right)(y)\right| \\
&=\sup_{x\in W_k}\left|\left(\alpha_{tc\Lambda_{k,i,j}(c)^{-1}(t^{(i)})^{-1}}(f)-f\right)(tcx)\right|\\
&=\sup_{x\in W_k}\left|\alpha_{tc\Lambda_{k,i,j}(c)^{-1}(t^{(i)})^{-1}}(f)(tcx)-f(tcx)\right|\\
&=\sup_{x\in W_k}|f(t^{(i)}\Lambda_{k,i,j}(c)x)-f(tcx)|\\
&\leq\frac{\epsilon^2}{4n^4}.
\end{align*}

Using this along with \eqref{h*fh} and \eqref{fh*h}, 
and the fact that the tower levels $tcW_k$ are disjoint for distinct $t$, we obtain
\begin{align}
\|h_{k,l,c,i,j}^*&fh_{k,l,c,i,j}-fh_{k,l,c,i,j}^*h_{k,l,c,i,j}\| \\
&=\underset{q\in\{1,\ldots,Q\}}\max\underset{t\in B_{k,l,c,q}}\sup\frac{q^2}{Q^2}\|(\alpha_{tc\Lambda_{k,i,j}(c)^{-1}(t^{(i)})^{-1}}(f)-f)1_{tcW_k}\| <\frac{\epsilon^2}{3n^4} \label{hfineq}
\end{align}

Now, set $w=\phi(e_{ij})$ and fix $f\in\Upsilon$. Since the indicator functions associated to $h_{k,l,c,i,j}$ have pairwise disjoint supports for distinct $k\in\{1,\ldots,N\}$, $l\in\{1,\ldots,L\}$, $m\in\{1,\ldots,M\}$ and $c\in C_{k,l,m}^{(j)}$, it follows from \eqref{hfineq} that
\[\|w^*gw-gw^*w\|<\frac{\epsilon^2}{3n^4}\]
where $g$ is equal to $f$ or $f^2$ (because in \eqref{hfineq} we were working with $f\in\Upsilon\cup\Upsilon^2$). From this, it follows that
\begin{align*}
\|w^*f^2w-fw^*fw\|&\leq\|w^*f^2w-f^2w^*w\|+\|f^2w^*w-fw^*fw\|\\
&<\frac{\epsilon^2}{3n^4}+\|f(fw^*w-w^*fw)\|\\
&\leq\frac{\epsilon^2}{3n^4}+\|f\|\|fw^*w-w^*fw\|\\
&<\frac{\epsilon^2}{3n^4}+\frac{\epsilon^2}{3n^4}\\
&=\frac{2\epsilon^2}{3n^4}.
\end{align*}
Hence
\begin{align*}
\|fw-wf\|^2&=\|(fw-wf)^*(fw-wf)\|\\
&=\|w^*f^2w-fw^*fw+fw^*wf-w^*fwf\|\\
&\leq\|w^*f^2w-fw^*fw\|+\|(fw^*w-w^*fw)f)\|\\
&<\frac{2\epsilon^2}{3n^4}+\frac{\epsilon^2}{3n^4}\\
&=\frac{\epsilon^2}{n^4}
\end{align*}
so that
\[\|fw-wf\|<\frac{\epsilon}{n^2}.\]
Therefore, for every norm-one $b=(b_{ij})\in M_n$, we have
\[\|\lbrack f,\phi(b)\rbrack\|\leq\sum_{i,j=1}^n\|\lbrack f,\phi(b_{ij})\rbrack\|<n^2\cdot\frac{\epsilon}{n^2}=\epsilon.\]

To complete the proof, we now show that $1-\phi(I)\precsim a$. By enlarging $E$ to include $D$, and shrinking $\delta$ if necessary, we can ensure that the sets $S_1,\ldots,S_N$ are sufficiently left-invariant under $D$ so that, for each $k\in\{1,\ldots,N\}$ and 
$u\in G^{(0)}$, the set $R_ku\coloneqq\{s\in S_ku\mid Ds\subset S_k\}$ has cardinality at least $|S_ku|/2$. Note that, since $D^{-1}\cdot O=X$, we have $s(D)=X=G^{(0)}$. Thus, for any $t\in G$, we have $t\in D^{-1}Dt$, because $Dt\neq\emptyset$.  

Fix $k\in\{1,\ldots,N\}$. Let $R_k=\bigsqcup_{u\in G^{(0)}} R_ku\subset S_k$. Let $R_k'$ be a maximal subset of $R_k$ with the property that the collection $\{Ds\mid s\in R_k'\}$ is pairwise disjoint. Observe that if $s,t\in R_k$ have $Ds\cap Dt\neq\emptyset$, then $s\in D^{-1}Dt$. Therefore, if $R_k'$ is chosen in such a way that there exists $t\in R_k$ such that $R_k'\cap D^{-1}Dt=\emptyset$, then, by including any element of $D^{-1}Dt\cap R_k$ in $R_k'$ (for instance, the choice $t\in D^{-1}Dt\cap R_k$ is always valid), we obtain a larger set which satisfies the disjointness condition on $R_k'$, contradicting the maximality of $R_k'$. So, for each $u\in G^{(0)}$,  $R_k'u$ contains at least one element from each subset of the form $D^{-1}Dt$ for $t\in R_ku$. Now, the smallest that we can make $R_k'u$ is to assume that we choose exactly one element from each such set, and to assume that all of these sets are subsets of $R_ku$. 
In this case, we get
\[|R_k'u|\geq\frac{|R_ku|}{\max_{t\in R_ku}|D^{-1}Dt|}\geq\frac{|S_ku|}{2\max_{u\in G^{(0)}}|D^{-1}Du|}.\]
Put $P=\max_{u\in G^{(0)}}|D^{-1}Du|$, and note that $P<\infty$ since we assumed that $|Du|$ and $|D^{-1}u|=|uD|$ were uniformly bounded for $u\in G^{(0)}$.

By constructing $D$ using the basis of clopen bisections $\{V(T\sqcap B_r(0),t,t')\}$, where we only consider sufficiently large values of $r$, we may ensure that the sets $Dx$ and $Dy$ consist of groupoid elements which implement the same vectors whenever $x$ and $y$ are sufficiently close.
This will allows us to choose $R_k'$ to contain either all of the groupoid elements (of some $S_{k,j}$) which make up a particular level of the tower $(W_k,S_k)$, or none of them. Combining this with the inequality above, we may assume that $(W_k,R_k')$ is a subtower of $(W_k,S_k)$ which contains at least one $(2P)$-th of the levels of the tower $(W_k,S_k)$. 

Since $D^{-1}O=X$, for each $s\in R_k'$ there exists $t\in D$ such that $tsW_k$ intersects $O$. Indeed, $s\in S_k$, so $sW_k
\neq\emptyset$, and thus we can find $t\in D$ and $w\in O$ such that $t^{-1}w=r(s)\in sW_k$, which implies that $w\in tsW_k$ and hence that $tsW_k\cap O\neq\emptyset$. By construction of $R_k$, we have $ts\in S_k$, so $tsW_k$ is a subset of a level of our tower, which was assumed to have diameter less than $\eta'\leq\theta$. Thus, since $O$ was the ball of radius $\theta$ centred at $x_0$, and since $tsW_k$ intersects $O$ and has diameter smaller than $\theta$, the tower level containing $tsW_k$ is contained in the ball of radius $2\theta$ centred at $x_0$, and thus $a$ takes value 1 on this entire tower level. Furthermore, since the sets $Ds$ for $s\in R_k'$ are disjoint, all the the elements $ts\in S_k$ constructed here are distinct. 
Thus, we have constructed an injective association $s\mapsto\tilde s$ of elements $s\in R_k'$ to elements $\tilde s=ts\in S_k$, such that $a$ takes the constant value 1 on the tower level which contains $r(\tilde s)$. This shows that, for each $u\in G^{(0)}$, the set $S_k^\sharp u$ of elements $t\in S_ku$ such that $a$ takes the constant value 1 on the tower level containing $r(t)$ has cardinality $|S_k^\sharp u|\geq|R_k'u|\geq|S_ku|/(2P)$. 
Note that, for any $j$, if $t\in S_{k,j}$ is an element of $S_k^\sharp$, then all elements $s\in S_{k,j}$ are in $S_k^\sharp$ as well, since $t\in S_k^\sharp$ implies that $a$ takes the constant value $1$ on the tower level $S_{k,j}W_k$. In other words, $(W_k,S_k^\sharp)$ is a subtower of $(W_k,S_k)$, and contains at least one $(2P)$-th of the levels of this tower. 

Set 
\[S_k''=\bigsqcup_{l=1}^L\bigsqcup_{m=1}^M\bigsqcup_{i=1}^n\bigsqcup_{c\in C_{k,l,m}^{(i)}}B_{k,l,c,Q}c,\]
noting that $(W_k,S_k'')$ is a subtower of $(W_k,S_k)$, and fix $u\in G^{(0)}$. We have, for each $v\in G^{(0)}$, that $|B_{k,l,c,Q}v|\geq(1-\kappa)|T_lv|$, 
 so we obtain
\begin{align*}
|S_k''u|&\geq\sum_{l=1}^L\sum_{m=1}^M\sum_{i=1}^n\sum_{c\in C_{k,l,m}^{(i)}u}|B_{k,l,c,Q}r(c)|\\
&\geq(1-\kappa)\sum_{l=1}^L\sum_{m=1}^M\sum_{i=1}^n\sum_{c\in C_{k,l,m}^{(i)}u}|T_{l}r(c)|\\
&=(1-\kappa)\sum_{l=1}^L\sum_{m=1}^M\sum_{i=1}^n\sum_{c\in C_{k,l,m}^{(i)}u}|T_{l}c|\\
&\geq(1-\kappa)\sum_{l=1}^L\sum_{m=1}^M\sum_{i=1}^n\left|\bigcup_{c\in C_{k,l,m}^{(i)}u}T_{l}c\right|\\ 
&=(1-\kappa)\sum_{l=1}^L\sum_{m=1}^M\sum_{i=1}^n|T_{l}C_{k,l,m}^{(i)}u|.
\end{align*}
Recall that $\bigcup_{i=1}^n C_{k,l,m}^{(i)}\subset C_{k,l,m}$ contains all except at most $n$ elements of $C_{k,l,m}$. We use this to continue the computation above, obtaining 
\begin{align*}
|S_k''u|&\geq(1-\kappa)\sum_{l=1}^L\sum_{m=1}^M\sum_{i=1}^n|T_{l}C_{k,l,m}^{(i)}u|\\
&\geq(1-\kappa)\sum_{l=1}^L\sum_{m=1}^M(|T_{l}C_{k,l,m}u|-n\max_{v\in G^{(0)}}|T_{l}v|)\\
&\geq(1-\kappa)\sum_{l=1}^L\sum_{m=1}^M(|T_lC_{k,l,m}u|-n\max_{v\in G^{(0)}}|T_lv|)\\
&\geq(1-\kappa)\sum_{l=1}^L\left(\left|\bigcup_{m=1}^MT_lC_{k,l,m}u\right|-Mn\max_{v\in G^{(0)}}|T_lv|\right)\\
&\geq(1-\kappa)\sum_{l=1}^L(|T_lC_{k,l}u|-Mn\max_{v\in G^{(0)}}|T_lv|)\\
&\geq(1-\kappa)\left(\left|\bigcup_{l=1}^LT_lC_{k,l}u\right|-Mn\sum_{l=1}^L\max_{v\in G^{(0)}}|T_lv|\right)\\
&\geq(1-\kappa)\left((1-\beta)|S_ku|-\beta|S_ku|\right)\\
&\geq(1-\kappa)(1-2\beta)|S_ku|,
\end{align*}
where we used the fact that $\{T_lc\mid l\in\{1,\ldots,L\}, c\in C_{k,l}\}$ is a $(1-\beta)$-cover of $S_k$, and the inequality \eqref{Mnmax} together with $(E,\delta)$-invariance of $S_k$ to obtain the second-to-last line.

So, if $\kappa$ and $\beta$ are small enough,\phantomsection\label{kappadetermined} 
we can ensure that, for each $u\in G^{(0)}$, we have
\[|(S_k\setminus S_k'')u|\leq\frac{|S_ku|}{4P}\leq\frac{|S_k^\sharp u|}{2}.\]

In this case, choose an injection $f_k: S_k\setminus S_k''\rightarrow S_k^\sharp$ which sends $(S_k\setminus S_k'')u$ to $S_k^\sharp u$ for each $u\in G^{(0)}$. Further enforce that if $t,t'\in S_{k,j}$ for some $j$, and $f_k(t)\in S_{k,p}$ for some $p$, then $f_k(t')\in S_{k,p}$ as well. In this way, $f_k$ will map the collection of elements associated to any tower level to a collection of elements associated to some other tower level. Since $\{(W_k,S_k)\}_{k=1}^N$ is a clopen tower decomposition of $X$, we have $X=\bigsqcup_{k=1}^NS_kW_k$, and hence
\[X\setminus\bigsqcup_{k=1}^NS_k''W_k=\bigsqcup_{k=1}^N(S_k\setminus S_k'')W_k.\]
Since $(W_k, S_k\setminus S_k'')$ and $(W_k, S_k^\sharp)$ are subtowers of $(W_k,S_k)$, write $S_k\setminus S_k''=\bigsqcup_{j\in J} S_{k,j}$, and $S_k^\sharp=\bigsqcup_{j^\sharp\in J^\sharp}S_{k,j^\sharp}$, where $J$ and $J^\sharp$ are finite index sets. Observe that the collection of subsets $S_{k,j}W_k$ for $k\in\{1,\ldots,N\}$ and $j\in J$ covers $X\setminus\bigsqcup_{k=1}^N S_k''W_k$. 
For each $j\in J$, and each $t\in S_{k,j}$, consider the element $f_k(t)t^{-1}\in G$, noting that $s(f_k(t))=s(t)=r(t^{-1})$ by construction of $f_k$, so that the multiplication is defined. Then $\bigsqcup_{t\in S_{k,j}}(f_k(t)t^{-1})tW_k=\bigsqcup_{t\in S_{k,j}}f_k(t)W_k$ is a level $S_{k,j^\sharp}$ of the tower $(W_k,S_k^\sharp)$, so, since $f_k$ was injective and all the levels  $S_kW_k$ of the tower $(W_k,S_k)$ were disjoint for $k\in\{1,\ldots,N\}$, the collection $\{\bigsqcup_{t\in S_{k,j}}f_k(t)W_k\mid k\in\{1,\ldots,N\}, j\in J\}$ of open subsets of $\bigsqcup_{k=1}^NS_k^\sharp W_K$ is pairwise disjoint, and consists of images (under the action map) of sets which covered $X\setminus\bigsqcup_{k=1}^N S_k''W_k$. Thus, we have shown that
\[X\setminus\bigsqcup_{k=1}^N S_k''W_k\prec\bigsqcup_{k=1}^N S_k^\sharp W_k.\]

Since the function $1-\phi(I)$ is supported on $X\setminus\bigsqcup_{k=1}^NS_k''W_k$,
and $a$ takes the constant value 1 on $\bigsqcup_{k=1}^NS_k^\sharp W_k$, it follows by Lemma~\ref{David12.3} that there exists a $v\in C(X)\rtimes_r G$ such that $v^*av=1-\phi(I)$, which shows that $1-\phi(I)\precsim a$. 
\end{proof}

\section{Quasidiagonality of tiling algebras}\label{sec:quasidiagonal}
In this section, we give a direct proof that the $C^*$-algebra $A_\textnormal{punc}$ associated to any aperiodic, repetitive tiling $T$ with finite local complexity is \textit{quasidiagonal}. Note that these algebras were already known to be quasidiagonal as a consequence of the general result obtained by Tikuisis, White and Winter \cite[Corollary~B]{TWW}. Our result  sacrifices generality to allow for a less abstract proof specific to tiling $C^*$-algebras. 

Quasidiagonality does not give us any new information regarding the classifiability of these algebras, but it does simplify the machinery required for their classification in certain cases. In particular, under the additional assumption that the algebra has a unique trace, the main results of \cite{MS} and \cite{MS2} show that tiling $C^*$-algebras have finite decomposition rank, and are therefore classified by their Elliott invariant. This allows us to obtain classifiability without reference to some of the complicated constructions culminating in \cite{TWW}. We also note that \cite[Theorem~B]{SWW} provides yet another route to classification in the monotracial setting, which does not require quasidiagonality.

In the light of the above, for the purposes of this section, we are interested in tilings whose $C^*$-algebras have unique trace. It is shown in \cite{Kel} that the $C^*$-algebras associated to aperiodic, repetitive \textit{substitution} tilings with FLC have unique trace. Recall that such $C^*$-algebras were already known to be classifiable in the case that the substitution system causes prototiles to appear in only finitely many orientations by combining \cite[Corollary~3.2]{Win1} and \cite{SW} (see also \cite{DS}). 
However, this route to classification relies on changing the prototile set so that the tling becomes a square tiling with appropriate matching rules and equipped with a $\Z^d$-action, while our classification result requires no such modification of the tiling.

In the non-substitution case, the class of \textit{quasiperiodic} tilings is detailed in \cite[Section~8.2]{Rob}, in which it is stated \cite[Corollary~8.5]{Rob} (see also \cite[Section~12]{Rob96}) that the dynamical systems associated to these tilings are uniquely ergodic, and hence their $C^*$-algebras have unique trace. 

The concept of quasidiagonality was introduced for sets operators by Halmos  in \cite[Page~902]{Halmos}, and extended to representations of $C^*$-algebras by Thayer \cite{Thayer}. We will use the following definition for abstract $C^*$-algebras discussed in \cite[Chapter~7.2]{BrOz}.

\begin{definition}
Let $H$ be a separable Hilbert space. A subset $A\subset B(H)$ is called a {\em quasidiagonal set of operators} if there exists an increasing sequence of finite rank projections, $P_1\leq P_2\leq P_3\leq\cdots$, which converge strongly to the identity operator (that is, for any $x\in H$, $\|P_n(x)-x\|\rightarrow0$ as $n\rightarrow\infty$), and are such that, for every $a\in A$,
\[
\|\lbrack a, P_n \rbrack \|\coloneqq\| aP_n-P_na\|\rightarrow 0 \textnormal{ as }n\rightarrow\infty.
\]
A $C^*$-algebra $A$ is called {\em quasidiagonal} if there exists a faithful representation $\pi:A\rightarrow B(H)$ such that $\pi(A)$ is a quasidiagonal set of operators.
\end{definition}

\begin{thm}
The $C^*$-algebra associated to any aperiodic and repetitive tiling with FLC is quasidiagonal.
\end{thm}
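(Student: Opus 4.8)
The plan is to work directly with the faithful nondegenerate representation $\pi_T\colon \Apunc\to B(\ell^2(T))$ constructed in Section~\ref{efun}, where $T$ is any fixed tiling in $\Opunc$; aperiodicity and repetitivity guarantee faithfulness, and $\ell^2(T)$ carries the orthonormal basis $\{\delta_{t}\}_{t\in T}$ indexed by the tiles of $T$. Recall from \eqref{ind rep delta} that each spanning partial isometry $e(P,t,t')$ acts as a \emph{local partial translation}: it sends $\delta_{t''}$ to $\delta_{t'''}$ exactly when the patch $P-x(t)$ occurs at $t''$, where $x(t''')=x(t'')+(x(t')-x(t))$, and to $0$ otherwise. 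Thus $\pi_T(e(P,t,t'))$ has \emph{finite propagation}, displacing each puncture by the fixed vector $v=x(t')-x(t)$ of length at most $\operatorname{diam}(P)$. Since $\EE$ spans a dense $*$-subalgebra of $\Apunc$, it suffices to produce an increasing sequence $P_1\le P_2\le\cdots$ of finite-rank projections converging strongly to the identity such that $\|[\pi_T(e),P_n]\|\to 0$ for every $e\in\EE$.

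Because $T$ has FLC, its punctures $\Lambda=\{x(t)\mid t\in T\}$ form a uniformly discrete, relatively dense (Delone) set, so the truncations onto $\ell^2(T\sqcap B_{R}(0))$ are finite-dimensional and exhaust $\ell^2(T)$ as $R\to\infty$. The crux is that the \emph{naive} choice of $P_n$ as the coordinate projection onto $\ell^2(T\sqcap B_{R_n}(0))$ fails: any finite partial translation crossing the sphere of radius $R_n$ contributes a commutator that maps an orthonormal set to an orthonormal set, hence has operator norm one regardless of $R_n$. This is precisely the obstruction that a shift is never quasidiagonalised by sharp coordinate cut-offs, and removing it in operator norm (as opposed to merely in trace, which the invariance estimates of Lemma~\ref{tilingTset} readily supply) is the main difficulty of the theorem.

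To repair the boundary I would use a Berg-type folding powered by repetitivity. Fixing a finite generator set with displacement vectors of length at most $\rho$ and a tolerance $\epsilon>0$, choose a collar width $w$ with $1/w<\epsilon$; amenability of the $\R^d$-flow — concretely, the approximate invariance of the sets $T_n$ from Lemma~\ref{tilingTset} — provides regions whose boundary collars can be made arbitrarily wide relative to $\rho$. Using repetitivity and FLC, I would then match the pattern on an outer collar of $B_{R_n}(0)$ with a recurring copy nearer the core, and define $P_n$ as the range of a partial isometry that \emph{folds} the collar back onto itself, spreading the single large twist that the coordinate projection concentrates at the sphere across the $w$ shells of the collar. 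Each generator then commutes with $P_n$ up to an error supported on one shell and of size $O(1/w)<\epsilon$, while $P_n$ stays finite-rank and $P_n\nearrow I$ strongly as $R_n\to\infty$. Diagonalising over an increasing exhaustion of $\EE$ and a decreasing sequence of tolerances produces a single sequence $(P_n)$ witnessing quasidiagonality of $\pi_T(\Apunc)$, and hence of $\Apunc$. The hard part will be arranging the repetitivity-furnished shell matching to be compatible with \emph{all} finitely many displacement vectors simultaneously, and handling the higher-dimensional geometry of the fold so that the corrected projections remain genuinely nested; the Delone constants bound the ranks and control how the finitely many local translations interact across the collar.
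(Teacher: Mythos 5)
Your diagnosis of the ``naive'' route is correct, and it is worth saying plainly that this naive route is exactly the one the paper takes: its proof uses the sharp cut-offs $Q_n$ onto $\operatorname{span}\{\delta_t \mid t\in T\sqcap B_n(0)\}$ and asserts $\|[Q_n,\pi_T(e(P,t,t'))]\|\to 0$ for every generator. The step meant to deliver this is the inequality \eqref{quasi form 2}, which bounds $\sup_{t''\in T}\|[Q_n,\pi_T(e)]\delta_{t''}\|_2$ by $\epsilon+\sup_{t''\in T\sqcap B_m(0)}\|[Q_n,\pi_T(e)]\delta_{t''}\|_2$ with $m$ depending only on $\epsilon$; no justification is given, and the omitted vectors are unit vectors, not small ones. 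Indeed, if an occurrence of the patch straddles the sphere of radius $n$ --- i.e.\ there is $t''$ at which $P-x(t)$ occurs such that exactly one of $t''$ and the tile $t'''$ with $x(t''')=x(t'')+(x(t')-x(t))$ meets $B_n(0)$ --- then $[Q_n,\pi_T(e)]\delta_{t''}=\pm\delta_{t'''}$, so the commutator has norm $1$. Such straddling cannot be avoided: for every large $n$, take a tile $s$ containing a point of norm $n$; the patch $T\sqcap B_{5D}(x(s))$ (with $D$ the maximal tile diameter) contains $s$ and also a tile lying entirely outside $B_n(0)$, and by FLC these configurations fall into finitely many translation classes, so some single generator $e(P,t,t')\in\EE$ has $\|[Q_n,\pi_T(e(P,t,t'))]\|=1$ for infinitely many $n$. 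So the boundary obstruction you identify is real, the paper's proof of this theorem is not valid as written, and the statement itself is rescued only by external results (e.g.\ \cite[Corollary~B]{TWW}, which the paper cites).

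That said, your own proposal does not yet constitute a proof either; it relocates the difficulty into the step you explicitly defer. The entire content is the Berg-type folding, and what is needed there is not recurrence of individual patches (which repetitivity gives) but an identification of a whole $(d-1)$-dimensional collar of $B_{R_n}(0)$ with another region of the tiling that intertwines, up to an $O(1/w)$ error, the partial translations coming from \emph{all} finitely many displacement vectors simultaneously, while keeping the corrected projections finite rank, genuinely nested, and strongly convergent to $1$. For $d=1$ such folding arguments are classical (and one-dimensional tiling algebras are handled in the literature, cf.\ \cite{HSWW}), but for $d\ge 2$ no elementary extension of the Berg/Pimsner--Voiculescu fold to general aperiodic repetitive FLC tilings is available, which is precisely why quasidiagonality in this generality is obtained from heavy machinery (nuclearity, UCT and faithful traces fed into \cite{TWW}, or classification/$\ZZ$-stability) rather than from a direct finite-rank approximation. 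As it stands your text is a correct critique plus a programme: to become a proof it would need the folding isometries constructed explicitly, the commutator estimate verified against every generator in the chosen finite set, and the nestedness and strong convergence of the modified projections established.
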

\begin{proof}
Let $\Opunc$ be the punctured hull of such a tiling, and fix a tiling $T \in \Opunc$. In \cite{Ren} it is shown that the induced representation from the unit space $\pi \coloneqq \pi_T$ described on the generating set $\EE$ defined in \eqref{ez-collection}
extends to a faithful nondegenerate representation of $A_\textnormal{punc}$ on $\ell^2(T)\coloneqq\ell^2(\lbrack T\rbrack)$. We will prove that $\pi(A_\textnormal{punc})$ is a quasidiagonal set of operators. Define a sequence of projections $Q_n \in B(\ell^2(T))$ by
\[
Q_n(\delta_t) \coloneqq \begin{cases}
\delta_t & \text{ if } t \in T \sqcap B_n(0)\\
0 & \text{ otherwise.}
\end{cases}
\]
It is clear that $(Q_n)_{n\in\N}$ is an increasing sequence of projections. Since $T$ has FLC, $Q_n$ has finite rank for each $n\in\N$, and $(Q_n)_{n\in\N}$ converges strongly to the identity by square-summability of the elements of $\ell^2(T)$.

We now show that, for each element $e(P,t,t') \in \EE$, we have
\begin{equation*}
\lim\limits_{n\rightarrow\infty}\|Q_n\pi(e(P, t, t^\prime)) - \pi(e(P,t,t^\prime))Q_n\|_{\textnormal{op}}=0.
\end{equation*}
This will prove that $A_\textnormal{punc}$ is quasidiagonal, since $\overline{\text{span}\{\EE\}}=A_\textnormal{punc}$. We first observe that
\begin{align}
\notag
\lim\limits_{n\rightarrow\infty}\|Q_n&\pi(e(P, t, t^\prime)) - \pi(e(P,t,t^\prime))Q_n\|_{\textnormal{op}}\\
\label{quasi form 1}
&=\lim\limits_{n\rightarrow\infty}\underset{t^{\prime\prime}\in T}{\sup}\|Q_n\pi(e(P, t, t^\prime))\delta_{t^{\prime\prime}} - \pi(e(P,t,t^\prime))Q_n\delta_{t^{\prime\prime}}\|_2.
\end{align}
Set $\epsilon>0$. Then there exists $m\in\N$ such that
\begin{align}
\notag
\underset{t^{\prime\prime}\in T}{\sup}\|Q_n&\pi(e(P, t, t^\prime))\delta_{t^{\prime\prime}} - \pi(e(P,t,t^\prime))Q_n\delta_{t^{\prime\prime}}\|_2 \\
\label{quasi form 2}
&\leq \epsilon+ \underset{t^{\prime\prime}\in T\sqcap B_m(0)}{\sup}\|Q_n\pi(e(P, t, t^\prime))\delta_{t^{\prime\prime}} - \pi(e(P,t,t^\prime))Q_n\delta_{t^{\prime\prime}}\|_2.
\end{align}
Since the supremum in \eqref{quasi form 2} is over a finite set, we can use this bound in \eqref{quasi form 1} and then interchange the limit and supremum on the right-hand side to obtain
\begin{align}\label{quasi form 3}
\lim\limits_{n\rightarrow\infty}\|Q_n&\pi(e(P, t, t^\prime)) - \pi(e(P,t,t^\prime))Q_n\|_{\textnormal{op}}\nonumber\\
&\leq\epsilon + \underset{t^{\prime\prime}\in T\sqcap B_m(0)}{\sup}\lim_{n\rightarrow\infty}\|Q_n\pi(e(P, t, t^\prime))\delta_{t^{\prime\prime}} - \pi(e(P,t,t^\prime))Q_n\delta_{t^{\prime\prime}}\|_2.
\end{align}
 Choose $n \in \N$ large enough that $T \sqcap B_n(0)$ includes all tiles that intersect the patch $B_{m+\|x(t^\prime)-x(t)\|}(0)$. Then we compute that
\begin{align*}
&\underset{t'' \in T\sqcap B(0,m)}{\sup}\|Q_n\pi(e(P, t, t^\prime))\delta_{t''} - \pi(e(P,t,t^\prime))Q_n\delta_{t''}\|_2\\
&= \begin{cases}
\underset{t'' \in T\sqcap B(0,m)}{\sup}\|Q_n\delta_{t'''} - \pi(e(P,t,t^\prime))\delta_{t''}\|_2&\text{ if } P-x(t)\subset T-x(t'')\\&\hspace{0.5cm}\text{and }x(t''')=x(t'')+(x(t')-x(t))\\ 
0 &\text{ otherwise} \\
\end{cases}\\
&=\begin{cases}
\underset{t'' \in T\sqcap B(0,m)}{\sup}\|\delta_{t'''} - \delta_{t'''} \|_2&\text{ if } P-x(t) \subset T-x(t'')\\&\hspace{0.5cm}\text{and }x(t''')=x(t'')+(x(t')-x(t))\\
0 &\text{ otherwise} \\
\end{cases}\\
&=0.
\end{align*}
Since this holds for all sufficiently large $n\in\N$, we see that
\begin{equation}\label{quasi form 4}
\underset{t^{\prime\prime}\in T\sqcap B_m(0)}{\sup}\lim_{n\rightarrow\infty}\|Q_n\pi(e(P, t, t^\prime))\delta_{t^{\prime\prime}} - \pi(e(P,t,t^\prime))Q_n\delta_{t^{\prime\prime}}\|_2=0.
\end{equation}
Using \eqref{quasi form 4} in \eqref{quasi form 3}, we obtain 
\[\lim\limits_{n\rightarrow\infty}\|Q_n\pi(e(P, t, t^\prime)) - \pi(e(P,t,t^\prime))Q_n\|_{\textnormal{op}}\leq\epsilon,\]
which gives the desired result.
\end{proof}

\begin{remark}
In \cite{Whit}, Kellendonk's construction of a tiling $C^*$-algebra was extended to include tilings with \textit{infinite rotational symmetry} whose prototiles appear in infinitely many orientations. In this case, a unitary is added to the densely spanning set $\EE$ defined in \eqref{ez-collection} to keep track of the rotational deviation of each tile from the standard orientation of the prototile. This unitary can easily be incorporated into the proof above to show that the $C^*$-algebras of tilings with infinite rotational symmetry are also quasidiagonal.
\end{remark}

\bibliographystyle{plain}
\bibliography{References/references}

\end{document}